\newtheorem{thm}{Theorem}[section]
\newtheorem{assertion}[thm]{}
\newtheorem{lem}[thm]{Lemma}
\newtheorem{cor}[thm]{Corollary}
\newtheorem{prop}[thm]{Proposition}
\theoremstyle{definition}
\newtheorem{definition}[thm]{Definition}
\theoremstyle{remark}
\newtheorem{rem}[thm]{Remark}
\numberwithin{equation}{section}
\begin{document}
\normalfont
\newcommand{\thmref}[1]{Theorem~\ref{#1}}
\newcommand{\secref}[1]{Section~\ref{#1}}
\newcommand{\lemref}[1]{Lemma~\ref{#1}}
\newcommand{\propref}[1]{Proposition~\ref{#1}}
\newcommand{\corref}[1]{Corollary~\ref{#1}}
\newcommand{\remref}[1]{Remark~\ref{#1}}
\newcommand{\eqnref}[1]{(\ref{#1})}
\newcommand{\exref}[1]{Example~\ref{#1}}

\newcommand{\nc}{\newcommand}

\nc{\on}{\operatorname}

\nc{\Z}{{\mathbb Z}}
\nc{\C}{{\mathbb C}}
\nc{\R}{{\mathbb R}}
\nc{\bbP}{{\mathbb P}}
\nc{\bF}{{\mathbb F}}

\nc{\boldD}{{\mathbb D}}
\nc{\oo}{{\mf O}}
\nc{\N}{{\mathbb N}}
\nc{\bib}{\bibitem}
\nc{\pa}{\partial}
\nc{\F}{{\mf F}}
\nc{\CA}{{\mathcal A}}
\nc{\CE}{{\mathcal E}}
\nc{\CP}{{\mathcal P}}
\nc{\CO}{{\mathcal O}}
\nc{\CK}{{\mathcal K}}
\nc{\Res}{\text{Res}}
\nc{\Ind}{\text{Ind}}
\nc{\Ker}{\text{Ker}}
\nc{\id}{\text{id}}

\nc{\be}{\begin{equation}}
\nc{\ee}{\end{equation}}

\nc{\rarr}{\rightarrow}
\nc{\larr}{\longrightarrow}
\nc{\al}{\alpha}
\nc{\ri}{\rangle}
\nc{\lef}{\langle}

\nc{\W}{{\mc W}}
\nc{\gam}{\ol{\gamma}}
\nc{\Q}{\ol{Q}}
\nc{\q}{\widetilde{Q}}
\nc{\la}{\lambda}
\nc{\ep}{\epsilon}

\nc{\g}{\mf g}
\nc{\h}{\mf h}
\nc{\n}{\mf n}
\nc{\bb}{\mf b}
\nc{\G}{{\mf g}}

\nc{\D}{\mc D}
\nc{\cE}{\mc E}
\nc{\CC}{\mc C}
\nc{\CH}{\mc H}
\nc{\CT}{\mc T}
\nc{\CI}{\mc I}
\nc{\CR}{\mc R}

\nc{\UK}{{\mc U}_{\CA_q}}

\nc{\CS}{\mc S}

\nc{\CB}{\mc B}

\nc{\Li}{{\mc L}}
\nc{\La}{\Lambda}
\nc{\is}{{\mathbf i}}
\nc{\V}{\mf V}
\nc{\bi}{\bibitem}
\nc{\NS}{\mf N}
\nc{\dt}{\mathord{\hbox{${\frac{d}{d t}}$}}}
\nc{\E}{\mc E}
\nc{\ba}{\tilde{\pa}}
\nc{\half}{\frac{1}{2}}

\def\smapdown#1{\big\downarrow\rlap{$\vcenter{\hbox{$\scriptstyle#1$}}$}}

\nc{\mc}{\mathcal}
\nc{\ov}{\overline}
\nc{\mf}{\mathfrak}
\nc{\ol}{\fracline}
\nc{\el}{\ell}
\nc{\etabf}{{\bf \eta}}
\nc{\zetabf}{{\bf
\zeta}}\nc{\x}{{\bf x}}
\nc{\xibf}{{\bf \xi}} \nc{\y}{{\bf y}}
\nc{\WW}{\mc W}
\nc{\SW}{\mc S \mc W}
\nc{\sd}{\mc S \mc D}
\nc{\hsd}{\widehat{\mc S\mc D}}
\nc{\parth}{\partial_{\theta}}
\nc{\cwo}{\C[w]^{(1)}}
\nc{\cwe}{\C[w]^{(0)}} \nc{\wt}{\widetilde}
\nc{\gl}{\mf gl}
\nc{\K}{\mf k}

\newcommand{\U}{{\rm{U}}}
\newcommand{\End}{{\rm{End}}}
\newcommand{\Hom}{{\rm{Hom}}}
\newcommand{\Lie}{{\rm{Lie}}}
\newcommand{\Uq}{{{\rm U}_q}}
\newcommand{\GL}{{\rm{GL}}}
\newcommand{\Sym}{{\rm{Sym}}}
\newcommand{\rk}{{\rm{rk}}}
\newcommand{\tr}{{\rm{tr}}}
\newcommand{\Rea}{{\rm{Re}}}
\newcommand{\rank}{{\rm{rank}}}
\newcommand{\im}{{\rm{Im}}}

\advance\headheight by 2pt

\nc{\fb}{{\mathfrak b}} \nc{\fg}{{\mathfrak g}}

\nc{\fh}{{\mathfrak h}}  \nc{\fk}{{\mathfrak k}}

\nc{\fl}{{\mathfrak l}} \nc{\fn}{{\mathfrak n}}

\nc{\fp}{{\mathfrak p}} \nc{\fu}{u}


\nc{\fS}{{\Sym}}

\nc{\fsl}{{\mathfrak {sl}}} \nc{\fsp}{{\mathfrak {sp}}}
\nc{\fso}{{\mathfrak {so}}} \nc{\fgl}{{\mathfrak {gl}}}

\nc{\A}{\mc A} \nc{\cF}{{\mathcal F}}

\nc{\cA}{{\mathcal A}} \nc{\cP}{{\mathcal P}} \nc{\cC}{{\mathcal C}}
\nc{\cU}{{\mathcal U}} \nc{\cB}{{\mathcal B}}

\def\lr{{\longrightarrow}}
\def\inv{{^{-1}}}

\def\xl{{\hbox{\lower 2pt\hbox{$\scriptstyle \mathfrak L$}}}}

\nc{\bX}{{\mathbf X}} \nc{\bx}{{\mathbf x}} \nc{\bd}{{\mathbf d}}
\nc{\bdim}{{\mathbf dim}} \nc{\bm}{{\mathbf m}}

\title[Temperley-Lieb]{A Temperley-Lieb analogue for the BMW algebra}

\author{G.I. Lehrer and R.B. Zhang}
\address{School of Mathematics and Statistics,
University of Sydney, N.S.W. 2006, Australia}
\email{gusl@maths.usyd.edu.au, rzhang@maths.usyd.edu.au}
\date {23rd November, 2007}

\dedicatory{To Toshiaki Shoji on his 60${}^{\text {\it \,th}}$
birthday}

\begin{abstract} The Temperley-Lieb algebra may be thought of
as a quotient of the Hecke algebra of type $A$, acting on tensor
space as the commutant of the usual action of quantum $\fsl_2$ on
$(\C(q)^2)^n$. We define and study a quotient of the
Birman-Wenzl-Murakami algebra, which plays an analogous role for the
$3$-dimensional representation of quantum 
$\fsl_2$. In the course of the discussion
we prove some general results about the radical of a cellular algebra,
which may be of independent interest.
\end{abstract}
\maketitle
\tableofcontents
\section{Introduction} Let $\fg$ be a finite dimensional
simple complex Lie algebra, $\cU(\fg)$ its universal enveloping
algebra, and $\cU_q=\cU_q(\fg)$ its Drinfeld-Jimbo quantisation, the
latter being an algebra over the function field
$\CK:=\C(q^{\frac{1}{2}})$, $q$ an indeterminate. As explained in
\cite[\S 6]{LZ}, the finite dimensional $\fg$-modules correspond
bijectively to the ``type $(1,1,\dots,1)$ modules'' of $\cU_q$, with
corresponding modules having the same character (which is an element
of the weight lattice of $\fg$).

Let $V$ be an irreducible finite dimensional $\fg$-module, and $V_q$
its $q$-analogue (the corresponding $\cU_q$-module). It is known
that there is an action of the $r$-string braid group $B_r$ on the
tensor space $V_q^{\otimes r}$ which commutes with the action of
$\cU_q$, and in \cite{LZ} a sufficient condition was given in order
that $B_r$ span $\End_{\cU_q}V_q^{\otimes r}$. This condition, that
$V$ be ``strongly multiplicity free'' (see \cite[\S 3]{LZ}) was
shown to be satisfied when $V$ is any irreducible module for
$\fsl_2$.

When $V=V(1)$, the (natural) two-dimensional $\fsl_2$-module, it is
known that the two braid generators satisfy a quadratic relation,
and together with the quantum analogue of the relation which
expresses the vanishing of alternating tensors of rank $\geq 3$
these relations give a presentation of the Temperley-Lieb algebra
(see \S 2 below and \cite{GL03}). In this work we study the algebra
which occurs when we start with the three-dimensional
$\fsl_2$-module $V(2)$. It follows from our earlier
work that this algebra is a
quotient of the BMW (Birman-Wenzl-Murakami) algebra, and we give a
presentation for a quotient of the latter, whose semisimple quotient
specialises generically to the endomorphism algebra
of tensor space. One of the major differences between our case
and the classical Temperley-Lieb case is that neither the BMW
algebra we start with nor its Brauer specialisation at $q=1$ is
semisimple. 

This work makes extensive use of the cellular structure of the BMW algebra
and its ``classical'' specialisation, the Brauer algebra. We use 
specialisation arguments to relate the quantum and classical
($q=1$) situations. Because of this, and also because we have in mind
applications of this work to cases where the modules concerned may
not be semisimple, we shall work in integral lattices for the
modules we encounter, and with integral forms of the endomorphism algebras.
Such constructions are closely related to the
``Lusztig form'' of the irreducible $\cU_q$-modules.

In \S\ref{radical} we prove some general results concerning the radical of 
a cellular algebra. These characterise it quite explicitly, and
give a general criterion for an ideal to contain the radical. These 
results may have some interest, independently of the rest of this work. 

\section{Dimensions} Let $\fg=\fsl_2$ and write
$V(d)$ for the $(d+1)$-dimensional irreducible representation of
$\fg$ on homogeneous polynomials of degree $d$ in two variables, say
$x$ and $y$. The standard generators $e,f,h$ of $\fsl_2$ act as
$x\frac{\partial}{\partial y}, y\frac{\partial}{\partial x},
x\frac{\partial}{\partial x}-y\frac{\partial}{\partial y}$
respectively.

\subsection{Dimension of the endomorphism algebra}
We start by giving a (well known) recursive formula for
$\dim_\C\End_{\fg}V(d)^{\otimes r}=
\dim_{\C(z)}\End_{\cU_q}V(d)_q^{\otimes r}$. From the classical
Clebsch-Gordan formula, we have for $n\geq m$,
\begin{equation}\label{cg}
V(n)\otimes V(m)\cong V(n+m)\oplus V(n+m-2)\oplus\dots\oplus V(n-m).
\end{equation}

Define the coefficients $m_n^r(i)$ by
\begin{equation}\label{defmnri}
V(n)^{\otimes r}\cong \bigoplus_{i\equiv rn(\text{mod }2)}m_n^r(i)V(i),
\end{equation}
and the dimension
\begin{equation}\label{defdnr}
d(n,r):=\dim\End_\fg(V(n)^{\otimes r}).
\end{equation}

Then $d(n,r)=\sum_{i\equiv rn(\text{mod }2)} m_n^r(i)^2$. Since all
the modules $V(n)$ are self dual, it also follows from Schur's lemma
that $(V(n)^{\otimes r},V(n)^{\otimes r})_\fg=(V(n)^{\otimes
2r},V(0))_\fg$, where $(\;,\;)_\fg$ denotes multiplicity. Thus
$$
d(n,r)=\sum_{i\equiv rn(\text{mod }2)}m_n^r(i)^2=m_n^{2r}(0).
$$

Using (\ref{cg}), it is a straightforward combinatorial exercise to
prove the following recursive formula. Let $x$ be an indeterminate
and write $[n]_x=\frac{x^{n}-x^{-n}}{x-x^{-1}}\in\Z[x^{\pm 1}]$ for
the ``$x$-analogue'' of $n\in\Z$. Then define the integers
$a_n^r(k)$ by $[n+1]_x^r=\sum_ka_n^r(k)x^k$, where the sum is over
$k$ such that $-nr\leq k\leq nr$ and $k\equiv nr\text{mod }2$.
Finally, define the integers $b_n^r(k)$ ($-nr\leq k\leq nr$) by
downward recursion on $k$ as follows.

Set $b_n^r(nr)=a_n^r(nr)=1$, and then $\sum_{i\geq
k}^{nr}b_n^r(i)=a_n^r(k)$. Equivalently,
$b_n^r(k)=a_n^r(k)-a_n^r(k+1)$

\begin{prop}\label{dimend}
We have, for all $n,r$ and $k$,
$$m_n^r(k)=b_n^r(k).$$
\end{prop}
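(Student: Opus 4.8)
The plan is to prove the two stated descriptions of $m_n^r(k)$ simultaneously by induction on $r$, using the Clebsch-Gordan rule \eqnref{cg} to set up the recursion, and to match it against the generating-function recursion defining $b_n^r(k)$. First I would record the base case $r=1$: here $V(n)^{\otimes 1}=V(n)$, so $m_n^1(k)=\delta_{k,n}$, while $[n+1]_x = \sum_{j=0}^{n} x^{n-2j}$ gives $a_n^1(k)=1$ for $-n\le k\le n$, $k\equiv n\ (2)$, whence $b_n^1(k)=a_n^1(k)-a_n^1(k+2)=\delta_{k,n}$ (note the recursion steps by $2$ in the relevant congruence class). So $m_n^1=b_n^1$, and in fact the character identity $\sum_k m_n^1(k)[k+1]_x = [n+1]_x$ holds trivially.

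The inductive step is the crux. The decomposition $V(n)^{\otimes r} = V(n)^{\otimes(r-1)}\otimes V(n)$ combined with \eqnref{cg} yields the recursion $m_n^r(k) = \sum_{j} m_n^{r-1}(j)$, where $j$ ranges over $|k-n| \le j \le k+n$ with $j\equiv k+n\ (2)$; equivalently, on the level of characters, $\sum_k m_n^r(k)[k+1]_x = \big(\sum_j m_n^{r-1}(j)[j+1]_x\big)\cdot[n+1]_x$. By induction the right-hand side equals $[n+1]_x^{r-1}\cdot[n+1]_x = [n+1]_x^r = \sum_k a_n^r(k) x^k$. So the key algebraic identity to verify is that the numbers $b_n^r(k)$ defined by the downward recursion $b_n^r(k)=a_n^r(k)-a_n^r(k+2)$ are exactly the coefficients expressing $[n+1]_x^r$ in the basis $\{[k+1]_x\}$, i.e. $\sum_k b_n^r(k)[k+1]_x = [n+1]_x^r$. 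This is a clean statement about Laurent polynomials symmetric under $x\mapsto x^{-1}$: writing $P(x)=\sum_k c(k)x^k$ for such a polynomial with $c(k)=c(-k)$, one checks that $P = \sum_k (c(k)-c(k+2)) [k+1]_x$ by telescoping, since $[k+1]_x-[k-1]_x = x^k + x^{-k}$ collapses the sum. (One must be slightly careful to note $a_n^r(k)=a_n^r(-k)$, which follows from the palindromic symmetry of $[n+1]_x^r$, and to handle the middle coefficient correctly.)

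I would then close the loop: both $(m_n^r(k))_k$ and $(b_n^r(k))_k$ are the coordinates of $[n+1]_x^r$ with respect to the basis $\{[k+1]_x : k\equiv nr\ (2),\ k\ge 0\}$ of the relevant subspace of symmetric Laurent polynomials — for $m_n^r$ this is the statement that $\mathrm{ch}\,V(k) = [k+1]_x$ and characters are additive over direct sums, established inductively above; for $b_n^r$ it is the telescoping identity just proved. Since that basis is linearly independent, the coordinates coincide, giving $m_n^r(k)=b_n^r(k)$. The main obstacle is bookkeeping: keeping the index ranges and the mod-$2$ congruence classes straight (the recursion for $b_n^r$ as literally written, $\sum_{i\ge k} b_n^r(i)=a_n^r(k)$, must be read within the fixed congruence class, so effectively $b_n^r(k)=a_n^r(k)-a_n^r(k+2)$), and verifying that the Clebsch-Gordan-driven recursion for $m_n^r$ really is the telescoping inverse of ``multiply by $[n+1]_x$''. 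Once the symmetric-Laurent-polynomial lemma is isolated, the rest is formal.
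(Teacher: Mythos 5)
Your proof is correct, and it is the natural argument the paper dismisses as a ``straightforward combinatorial exercise'': identify both $(m_n^r(k))_k$ and $(b_n^r(k))_k$ as the coordinates of $[n+1]_x^r$ with respect to the basis $\{[k+1]_x : k\geq 0,\ k\equiv nr\ (\mathrm{mod}\ 2)\}$ of symmetric Laurent polynomials in the relevant parity class --- the former by multiplicativity and additivity of characters (with $\mathrm{ch}\,V(k)=[k+1]_x$), the latter by the telescoping identity $\sum_{k\geq 0}(c(k)-c(k+2))[k+1]_x=\sum_k c(k)x^k$. You have also correctly resolved a slip in the paper's statement of the recursion: since $a_n^r$ is supported on the congruence class $k\equiv nr\ (\mathrm{mod}\ 2)$, the partial-sum condition $\sum_{i\geq k}b_n^r(i)=a_n^r(k)$ must be read within that class, so the equivalent closed form is $b_n^r(k)=a_n^r(k)-a_n^r(k+2)$ rather than $a_n^r(k)-a_n^r(k+1)$ as printed --- a reading confirmed by the paper's own use in the case $n=2$, where it applies $m_2^r(2\ell)=a_2^r(2\ell)-a_2^r(2\ell+2)$.
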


\subsection{The case $n=1$} In this case one verifies easily
that
$$
a_1^r(k)=\begin{cases} \binom{r}{\frac{k+r}{2}}\text{ if } k\equiv
r\text{ (mod }2)\\0\text{ otherwise}\\ \end{cases}.
$$

It is then straightforward to compute that
$$
m_1^r(k)=\binom{r}{\frac{r+k}{2}}\frac{2(k+1)}{r+k+2}.
$$

In particular,
\begin{equation}\label{dimtl}
d(1,r)=\dim\End_{\fsl_2}V(1)^{\otimes r}=m_1^{2r}(0)
={\frac{1}{r+1}}{\binom{2r}{r}}.
\end{equation}

\subsection{The case $n=2$} With the above notation,
we have
$$
a_2^r(2\ell)=\sum_{k\geq \frac{\ell+r}{2}}^r\frac{r!}
{(2k-(\ell+r))!(\ell+r-k)!(r-k)!}
$$

In this case, we have from Proposition \ref{dimend}, that
$m_2^r(2\ell)=a_2^r(2\ell)-a_2^r(2\ell+2)$. This relation easily
yields the following formula for $d(2,r)$.
\begin{equation}\label{d2r}
\begin{aligned}
d(2,r)&=\dim\End_{\fsl_2}V(2)^{\otimes r}=m_2^{2r}(0)\\
&=\binom{2r}{r}+\sum_{p=0}^{r-1}\binom{2r}{2p}\binom{2p}{p}
\frac{3p-2r+1}{p+1}.
\end{aligned}
\end{equation}
Thus for $r=1,2,3,4,5$ the respective dimensions are $1,3,15,91,$
and $603$.

\section{Some generators and relations for
$\End_{\cU_q}(V(n)_q^{\otimes r})$}

In this section, we review the results of \cite{LZ} which pertain to
the structure of the endomorphism algebras we wish to study.

\subsection{The general case}\label{gen}

Recall that with $\fg$ and $\cU_q$ as above, given any
$\cU_q$-module $V_q$, there is an operator $\check R\in
\End_{\cU_q}(V_q\otimes V_q)$, known as an ``$R$-matrix'' (see
\cite[\S 6.2]{LZ}). Denote by $R_i$ the element $\id_{V_q}^{\otimes
i-1}\otimes \check R\otimes \id_{V_q}^{\otimes r-i-1}$ of
$\End_{\cU_q}V_q^{\otimes r}$ ($i=1,\dots,r-1$). It is well known
that the $R_i$ satisfy the braid relations:
\begin{equation}\label{br}
\begin{aligned}
R_iR_j&=R_jR_i\text{ if }|i-j|\geq 2\\
R_iR_{i+1}R_i&=R_{i+1}R_iR_{i+1} \text{ for }1\leq i\leq r-1.\\
\end{aligned}
\end{equation}

Moreover if $V_q$ is strongly multiplicity free (for the definition
see \cite[\S 7]{LZ}), it follows from \cite[Theorem 7.5]{LZ} that
the endomorphisms $R_i$ generate $\End_{\cU_q}(V_q^{\otimes r})$.
Assume henceforth that $V_q$ is strongly multiplicity free. The
following facts may be found in \cite[\S\S 3,7]{LZ}.

Firstly, $V_q=L_{\lambda_0}$, the unique irreducible module for
$\cU_q$ with highest weight $\lambda_0$, and $V_q\otimes V_q$ is
multiplicity free as $\cU_q$-module. Write
$$
V_q\otimes V_q\cong\oplus_{\mu\in\CP(\lambda_0)}L_\mu,
$$
where $\CP_{\lambda_0}$ is the relevant set of dominant weights of
$\fg$, and $L_\mu$ is the irreducible $\cU_q$- module with highest
weight $\mu$. Let $P(\mu)$ be the projection $:V_q\otimes V_q\lr
L_\mu$. These projections clearly span $\End_{\cU_q}(V_q\otimes
V_q)$, and we have (see \cite[(6.10)]{LZ})
\begin{equation}\label{eigenR}
\check R=\sum_{\mu\in\CP_{\lambda_0}}\varepsilon(\mu)
q^{\frac{1}{2}(\chi_\mu(C)-2\chi_{\lambda_0}(C))}P(\mu),
\end{equation}
where $C\in\cU(\fg)$ is the classical quadratic Casimir element,
$\chi_\lambda(C)$ is the scalar through which $C$ acts on the
(classical) irreducible $\cU(\fg)$-module with highest weight
$\lambda$, and $\varepsilon(\mu)$ is the sign occurring in the
action of the interchange $s$ on the classical limit $V\otimes V$ of
$V_q\otimes V_q$ as $q\to 1$.

\subsection{Relations for the case $\fg=\fsl_2$}

It was proved in \cite{LZ} that all irreducible modules for
$\cU_q(\fsl_2)$ are strongly multiplicity free. In this subsection,
we make explicit the relations above when $\fg=\fsl_2$ and
$V_q=V(n)_q$. These statements are all well known. As above, we
think of $V(n)$ as the space $\C[x,y]_n$ of homogeneous polynomials
of degree $n$. This has highest weight $n$, with
$h=x\frac{\partial}{\partial x}-y\frac{\partial}{\partial y}$ acting
on $x^n$ as highest weight vector. We have seen that
\begin{equation}\label{square}
V(n)\otimes V(n)=\oplus_{\ell=0}^{n}V(2\ell).
\end{equation}

It is easy to compute the highest weight vectors in the summands of
(\ref{square}), which leads to
\begin{assertion}\label{varepsilon}
The endomorphism $s:v\otimes w\mapsto w\otimes v$ of $V(n)\otimes
V(n)$ acts on $V(2\ell)$ as $(-1)^{n+\ell}$. That is, in the
notation of \S\ref{gen}, $\varepsilon(2\ell)=(-1)^{d+\ell}$.
\end{assertion}

Next, if $\theta= x\frac{\partial}{\partial
x}+y\frac{\partial}{\partial y}$ is the Euler form, the Casimir $C$
is given by $C=\theta+\frac{1}{2}\theta^2$. It follows that
\begin{assertion}\label{chimu}
The Casimir $C$ acts on $V(n)$ as multiplication by
$\chi_n(C)=\frac{1}{2}n(n+2)$.
\end{assertion}

Applying the statements in \S\ref{gen} here, we obtain

\begin{prop}\label{projections}
Let $V$ be the irreducible $\fsl_2$ module $V(n)$, with $V(n)_q$ its
quantum analogue. Then $V(n)_q\otimes
V(n)_q=\oplus_{\ell=0}^{n}V(2\ell)_q$, and if $\check R$ is the
$R$-matrix acting on $V(n)_q\otimes V(n)_q$, then
\begin{equation}\label{r-as-proj}
\check R=\sum_{\ell=0}^n(-1)^{n+\ell}
q^{\frac{1}{2}(\ell(2\ell+2)-n(n+2))}P(2\ell),
\end{equation}
where $P(2\ell)$ is the projection to the component $V(2\ell)_q$.
\end{prop}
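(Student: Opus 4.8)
The plan is to specialize the general formula \eqnref{eigenR} to the case $\fg=\fsl_2$ and $V_q=V(n)_q$, using the two assertions just established. First I would invoke \propref{projections}'s own hypothesis chain: since all irreducible $\cU_q(\fsl_2)$-modules are strongly multiplicity free, \S\ref{gen} applies verbatim, so $V(n)_q\otimes V(n)_q$ is multiplicity free and \eqnref{eigenR} holds with $\lambda_0=n$ and $\CP_{\lambda_0}$ the set of weights appearing in the decomposition. The decomposition $V(n)_q\otimes V(n)_q=\oplus_{\ell=0}^n V(2\ell)_q$ is the quantum analogue of \eqnref{square}, which holds because the $\cU_q$-module category has the same fusion rules as the classical one (characters lie in the weight lattice, as recalled in the introduction); so $\CP_{\lambda_0}=\{2\ell : 0\le\ell\le n\}$ and the projections $P(2\ell)$ are well defined.

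Next I would substitute the explicit data into \eqnref{eigenR}. By \lemref{chimu} (Assertion~\ref{chimu}), $\chi_{2\ell}(C)=\tfrac12(2\ell)(2\ell+2)=\ell(2\ell+2)$ and $\chi_{\lambda_0}(C)=\chi_n(C)=\tfrac12 n(n+2)$, so the exponent of $q$ becomes $\tfrac12\bigl(\chi_{2\ell}(C)-2\chi_n(C)\bigr)=\tfrac12\bigl(\ell(2\ell+2)-n(n+2)\bigr)$, matching \eqnref{r-as-proj}. By Assertion~\ref{varepsilon}, $\varepsilon(2\ell)=(-1)^{n+\ell}$ (the displayed assertion writes $(-1)^{d+\ell}$, but $d=n$ in the current notation). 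Assembling these gives exactly
\begin{equation*}
\check R=\sum_{\ell=0}^n(-1)^{n+\ell}q^{\frac12(\ell(2\ell+2)-n(n+2))}P(2\ell),
\end{equation*}
which is the claimed identity.

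The only genuine content beyond bookkeeping is verifying that \eqnref{eigenR} truly transfers to the quantum setting with the \emph{classical} Casimir eigenvalues and the \emph{classical} sign $\varepsilon(\mu)$ — but this is precisely the statement of \cite[(6.10)]{LZ} quoted in \S\ref{gen}, so I would simply cite it rather than reprove it. Consequently there is no real obstacle; the proposition is essentially a corollary of \propref{dimend}'s surrounding setup together with Assertions~\ref{varepsilon} and~\ref{chimu}. If one wanted a self-contained check, the mildly delicate point would be confirming the sign $\varepsilon(2\ell)=(-1)^{n+\ell}$ by locating the highest-weight vector of each $V(2\ell)$ summand inside $V(n)\otimes V(n)$ and seeing how the flip $s$ acts on it — but that is exactly what Assertion~\ref{varepsilon} records, so I would take it as given.
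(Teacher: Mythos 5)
Your proof is correct and follows precisely the paper's own (implicit) argument: the paper simply says ``Applying the statements in \S\ref{gen} here, we obtain,'' meaning exactly the substitution of Assertions~\ref{varepsilon} and~\ref{chimu} into \eqnref{eigenR} that you carry out. Your arithmetic for the exponent and the observation that $d=n$ in Assertion~\ref{varepsilon} are both right, so there is nothing to add.
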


Now let $E_q(n,r):=\End_{\cU_q}(V(n)_q^{\otimes r})$, and let
$R_i\in E_q(n,r)$ be the endomorphism defined above
($i=1,\dots,r-1$). We have seen that the $R_i$ generate $E_q(n,r)$,
and that they satisfy the relations (\ref{br}). From the relation
(\ref{r-as-proj}), we deduce that for all $i$,
\begin{equation}\label{poly-r}
\prod_{\ell=0}^n\left(R_i-(-1)^{n+\ell}
q^{\frac{1}{2}(\ell(2\ell+2)-n(n+2))}\right)=0.
\end{equation}

Writing $T_i:=(-1)^nq^{\frac{1}{2}n(n+2)}R_i$, the above relation
simplifies to
\begin{equation}\label{poly-T}
\prod_{\ell=0}^n\left(T_i-(-1)^{\ell} q^{\ell(\ell+1)}\right)=0.
\end{equation}

Now the relations (\ref{br}) and (\ref{poly-r}) do not provide a
presentation for $E_q(n,r)$, and it is one of our objectives to
determine further relations among the $R_i$. These will suffice to
present $E_q(n,r)$ as an associative algebra generated by the $R_i$
only in some special cases.

\subsection{Basic facts about $\cU_q(\fsl_2)$}\label{basic}
It will be convenient to establish notation for the discussion below
by recalling the following basic facts. The algebra
$\cU_q:=\cU_q(\fsl_2)$ has generators $e,f$ and $k^{\pm 1}$, with
relations $kek\inv=q^2e$, $kfk\inv=q^{-2}f$,  and
$ef-fe=\frac{k-k\inv}{q-q\inv}$. The weight lattice $P$ is
identified with $\Z$, and for $\lambda,\mu\in P$,
$(\lambda,\mu)=\frac{1}{2}\lambda\mu$. In general, if $M$ is a
$\cU_q$-module with weights $\lambda_1,\dots,\lambda_d$ ($d=\dim
M$), then the quantum dimension of (i.e. quantum trace of the
identity on) $M$ is $\dim_q M=\sum_{i=1}^dq^{-(2\rho,\lambda_i)}$,
where $2\rho$ is the sum of the positive roots, in this case $2$.
Hence $\dim_qV(n)_q=q^n+q^{n-2}+\dots+q^{-n}=[n+1]_q$. The
comultiplication is given by $\Delta(e)=e\otimes k+1\otimes e$,
$\Delta(f) =f\otimes 1+k^\inv\otimes f$, $\Delta(k)=k\otimes k$.

\subsection{The case $n=1$: the Temperley-Lieb algebra}\label{sectionhecke}

In this case the relation (\ref{poly-r}) reads
$$
(R_i+q^{\frac{-3}{2}})(R_i-q^{\frac{1}{2}})=0.
$$
Renormalising by setting $T_i=q^{\frac{1}{2}}R_i$
($i=1,2,\dots,r-1$) we obtain
\begin{equation}\label{hecke}
(T_i+q^{-1})(T_i-q)=0.
\end{equation}

Now it is well known that the associative $\C(q)$-algebra with
generators $T_1,\dots,T_{r-1}$ and relations (\ref{br}) and
(\ref{poly-r}) is the Hecke algebra $H_r(q)$ of type $A_{r-1}$ with
parameter $q$. The algebra $H_r(q)$ has a $\C(q)$-basis $\{T_w\mid
w\in\Sym_r\}$, and we may therefore speak of the action of
$T_w(=q^{\frac{\ell(w)}{2}}R_w)$ (where $\ell(w)$ is the usual
length function in $\Sym_r$) on $V_q^{\otimes r}$.

Evidently we have a surjection $\phi:H_r(q)\lr E_q(1,r)$, and we
shall determine $\ker \phi$. The next statement is just the quantum
analogue of the fact that there are no non-zero alternating tensors
in $V^{\otimes 3}$ if $V$ is $2$-dimensional.

\begin{lem}\label{tl-alternator}
Let $V=V(1)_q$ be the two-dimensional irreducible $\cU_q(\fsl_2)$-
module. In the notation above, let $E(\varepsilon)=
\sum_{w\in\Sym_3}(-q)^{-\ell(w)}T_w\in H_3(q)$. Then
$E(\varepsilon)$ acts as zero on $V^{\otimes 3}$.
\end{lem}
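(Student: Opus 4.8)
The plan is to reduce the claim to a statement about the action of the Hecke algebra element $E(\varepsilon)$ on a single highest weight vector in $V^{\otimes 3}$, exploiting the fact that $E(\varepsilon)$ is (up to scalar) the $q$-antisymmetriser of $\Sym_3$. First I would record that $E(\varepsilon)$ spans the one-dimensional ``sign'' component of $H_3(q)$, i.e. it satisfies $T_i E(\varepsilon) = E(\varepsilon) T_i = -q^{-1}E(\varepsilon)$ for each generator $T_i$; this follows from the relation $(T_i+q^{-1})(T_i-q)=0$ of \eqref{hecke} together with a short computation using the basis $\{T_w\}$ and the fact that $\ell(s_iw)=\ell(w)\pm1$. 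In particular $E(\varepsilon)^2$ is a scalar multiple of $E(\varepsilon)$, so the image of $E(\varepsilon)$ acting on $V^{\otimes 3}$ is a $\cU_q$-submodule on which each $T_i$ (equivalently each $R_i$) acts as the scalar $-q^{-1}\cdot q^{-1/2}=-q^{-3/2}$, using $T_i=q^{1/2}R_i$.

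Next I would identify what $\cU_q$-module that image can be. By \propref{projections} with $n=1$ we have $V(1)_q\otimes V(1)_q = V(0)_q\oplus V(2)_q$, and $\check R$ acts as $-q^{-3/2}$ exactly on the $V(0)_q$ summand (the ``alternating'' part) and as $q^{1/2}$ on $V(2)_q$. Decomposing $V^{\otimes 3}=(V(1)_q)^{\otimes 3}$ via Clebsch--Gordan \eqref{cg}, one gets $V(1)_q^{\otimes 3}\cong V(3)_q\oplus 2\,V(1)_q$; crucially there is no copy of $V(0)_q$ in $V(1)_q^{\otimes 3}$, since $V(0)_q$ can only occur in a tensor product of two copies of the same irreducible, and here we have three factors with all weights odd. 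Concretely, every weight of $V(1)_q^{\otimes3}$ is odd, so the zero weight space is zero and $V(0)_q$ cannot appear.

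Then I would combine these two observations. The image $E(\varepsilon)V^{\otimes 3}$ is a $\cU_q$-submodule of $V^{\otimes 3}$ on which $R_1$ acts as $-q^{-3/2}$; but on $V^{\otimes3}$ the operator $R_1=\check R\otimes\id$ has eigenvalue $-q^{-3/2}$ only on the subspace $P(0)(V\otimes V)\otimes V$, which is $V(0)_q\otimes V(1)_q\subseteq V^{\otimes3}$ and is isomorphic to $V(1)_q$. A symmetric argument with $R_2=\id\otimes\check R$ shows the image also lies in $V\otimes V(0)_q$. But $\bigl(V(0)_q\otimes V(1)_q\bigr)\cap\bigl(V(1)_q\otimes V(0)_q\bigr)$ inside $V^{\otimes3}$ is zero --- one can see this on the level of weight spaces or by a direct computation in a basis $x,y$ of $V(1)_q$, noting that a vector killed by the $R_1$-idempotent for $V(2)_q$ and by the $R_2$-idempotent for $V(2)_q$ simultaneously must vanish. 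Hence $E(\varepsilon)V^{\otimes3}=0$, which is the assertion.

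The main obstacle I anticipate is the last intersection step: showing that the two ``alternating in the first pair'' and ``alternating in the second pair'' subspaces of $V^{\otimes3}$ meet only in zero. The cleanest route is probably not via abstract representation theory but by an explicit $2\times2\times2$ calculation: with $V=\C(q)x\oplus\C(q)y$ and the known action of $\check R$ on the four basis tensors $x\otimes x$, $x\otimes y$, $y\otimes x$, $y\otimes y$ (from \eqref{r-as-proj}), one writes a general element of $V^{\otimes3}$, imposes $R_1 v=-q^{-3/2}v$ and $R_2 v=-q^{-3/2}v$, and checks the resulting linear system has only the trivial solution. Alternatively, and perhaps more elegantly, one can avoid the intersection entirely: since $E(\varepsilon)$ is annihilated on the left by $(T_1-q)$ it kills the $q$-eigenspace of $R_1$, and since $V(1)_q^{\otimes3}$ has no $V(0)_q$-summand while the $(-q^{-3/2})$-eigenspace of $R_1$ is $\cong V(1)_q$, any further Hecke generator must move the image out of that eigenspace --- making the image simultaneously zero. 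I would present whichever of these is shortest, flagging that the verification is a finite, if slightly tedious, linear-algebra check.
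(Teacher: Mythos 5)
Your main argument follows the paper's proof essentially verbatim: you observe that $(T_i+q^{-1})E(\varepsilon)=E(\varepsilon)(T_i+q^{-1})=0$, deduce that the image of $E(\varepsilon)$ lands inside $P_1(0)V^{\otimes 3}\cap P_2(0)V^{\otimes 3}=(L_0\otimes V)\cap(V\otimes L_0)$, and then show this intersection of two irreducible submodules is zero, exactly as in the paper. One caution about the ``more elegant'' alternative you sketch at the end: the assertion that $E(\varepsilon)$ is annihilated on the left by $(T_1-q)$ is false (in fact $(T_1-q)E(\varepsilon)=-(q+q^{-1})E(\varepsilon)\neq 0$; it is $(T_1+q^{-1})$ that annihilates it), and the step ``any further Hecke generator must move the image out of that eigenspace'' is a heuristic rather than an argument, so the intersection route you give first --- which is the paper's route --- is the one to keep.
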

\begin{proof}
Let $v_1\in V$ have weight $1$, and take $v_2=fv_1$ as the
complementary basis element, which has weight $-1$. We have
$V\otimes V\cong L_0\oplus L_1$, where $L_0$ is the trivial $\cU_q$
module, and $L_1$ is the irreducible $\cU_q$-module of dimension
$3$. By computing the action of $\Delta(e)$ and $\Delta(f)$, one
sees easily that $L_0$ is spanned by $qv_1\otimes v_2-v_2\otimes
v_1$, and $L_1$ has basis $v_1\otimes v_1$, $v_2\otimes v_2$ and
$v_2\otimes v_1+q^\inv v_1\otimes v_2$.

Now $T=q^{\frac{1}{2}}\check R$ acts on $L_0$ as $-q^{-1}$ and on
$L_1$ as $q$. If $P(0),P(1)$ are the projections of $V\otimes V$
onto $L_0,L_1$ respectively, this implies that
$P(0)=-\frac{1}{q+q^{-1}}(T-q)$, and
$P(1)=\frac{1}{q+q^{-1}}(T+q^{-1})$. As above, write $P_i(j)$ for
the projection of $V^{\otimes r}$ obtained by applying $P(j)$ to the
$(i,i+1)$ factors of $V^{\otimes r}$ ($i=1,\dots,r-1;\;\;j=0,1)$.
Then $P_i(1)=-\frac{1}{q+q^{-1}}(T_i+q^{-1})$, etc.

Next observe that since $(T_i+q^{-1})E(\varepsilon)
=E(\varepsilon)(T_i+q^{-1})=0$ for $i=1,2$, we have
$P_i(1)E(\varepsilon)=E(\varepsilon)P_i(1)=0$ for $i=1,2$. Since
$P_i(0)+P_i(1)=\id_{V^{\otimes 3}}$, it follows that
$$
\begin{aligned}
E(\varepsilon)V^{\otimes 3}&\subseteq P_1(0)V^{\otimes 3}
\cap P_2(0)V^{\otimes 3}\\
&=L_0\otimes V\cap V\otimes L_0.
\end{aligned}
$$
Now $L_0\otimes V$ and $V\otimes L_0$ are two irreducible submodules
of $V^{\otimes 3}$. Hence they either coincide or have zero
intersection. But $L_0\otimes V$ has basis $\{(qv_1\otimes
v_2-v_2\otimes v_1)\otimes v_i\mid i=1,2\}$. Hence $qv_1\otimes
v_1\otimes v_2-v_1\otimes v_2\otimes v_1$ is in $V\otimes L_0$, but
not in $L_0\otimes V$.

It follows that $E(\varepsilon)V^{\otimes 3}=0$.
\end{proof}

This enables us to prove

\begin{thm}
Let $V$ be the two-dimensional irreducible representation of $\cU_q(\fsl_2)$.
For each integer $r\geq 2$, let
$E_q(1,r)=\End_{\cU_q(\fsl_2)}(V^{\otimes r})$. Then $E_q(1,r)$ is
isomorphic to the Temperley-Lieb algebra $TL_r(q)$ (cf. \cite[p.
144]{GL04}).
\end{thm}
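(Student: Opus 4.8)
The plan is to exhibit $E_q(1,r)$ as the quotient of $H_r(q)$ by the two-sided ideal generated by the alternator $E(\varepsilon)$ of \lemref{tl-alternator}, and then to identify that quotient with $TL_r(q)$. First I would consider the surjection $\phi:H_r(q)\lr E_q(1,r)$ noted above. By \lemref{tl-alternator}, $E(\varepsilon)$ acts as zero on $V^{\otimes 3}$; since $E(\varepsilon)$ involves only $T_1,T_2$, the elements $\id_{V_q}^{\otimes i-1}\otimes E(\varepsilon)\otimes\id_{V_q}^{\otimes r-i-2}$ all lie in $\ker\phi$, so the two-sided ideal $I_r$ they generate satisfies $I_r\subseteq\ker\phi$. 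Hence $\phi$ factors through $H_r(q)/I_r$.

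Next I would recall the standard fact (see \cite{GL03}, \cite[p.~144]{GL04}) that $TL_r(q)$ is precisely $H_r(q)/I_r$: the Temperley--Lieb algebra is the quotient of the type $A_{r-1}$ Hecke algebra by the ideal generated by the ``alternator'' on any three consecutive strands, equivalently by the elements $T_iT_{i+1}T_i+T_iT_{i+1}+T_{i+1}T_i+T_i+T_{i+1}+1$ (up to normalisation). So we obtain an induced surjection $\bar\phi:TL_r(q)\cong H_r(q)/I_r\lr E_q(1,r)$, and it remains only to prove injectivity, i.e.\ that $\ker\phi=I_r$, equivalently that $\dim_{\C(q)}TL_r(q)\leq\dim_{\C(q)}E_q(1,r)$.

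For this I would invoke the dimension count already done in the paper: by \eqnref{dimtl}, $\dim_{\C(q)}E_q(1,r)=d(1,r)=\frac{1}{r+1}\binom{2r}{r}$, the $r$-th Catalan number. On the other hand it is classical that $TL_r(q)$ has a basis indexed by planar (non-crossing) matchings of $2r$ points, or equivalently by fully commutative elements of $\Sym_r$, and the number of these is again the Catalan number $\frac{1}{r+1}\binom{2r}{r}$. Since the surjection $\bar\phi$ goes between two $\C(q)$-vector spaces of the same finite dimension, it is an isomorphism, and it is manifestly an algebra map; hence $E_q(1,r)\cong TL_r(q)$.

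The main obstacle here is not really a deep one: it is making sure the presentation of $TL_r(q)$ as a Hecke-algebra quotient matches, under our normalisation $T_i=q^{1/2}R_i$, the ideal generated by $E(\varepsilon)$ rather than by some Temperley--Lieb idempotent with a different scalar. One should check that $E(\varepsilon)$, expanded in the basis $\{T_w\mid w\in\Sym_3\}$, is a nonzero scalar multiple of the standard ``$U_1U_2U_1$''-type relator — equivalently that its image is the ideal usually quotiented out — rather than verifying it vanishes on a larger or smaller space than intended. Once the normalisations are reconciled, the argument is just: surjection $+$ equal dimensions $\Rightarrow$ isomorphism. I would also remark that this simultaneously reproves that $\ker\phi$ is generated by $E(\varepsilon)$, which is the quantum incarnation of ``no alternating $3$-tensors,'' and is the model for the harder BMW analogue pursued in the rest of the paper.
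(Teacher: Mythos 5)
Your argument follows the paper's proof essentially exactly: factor the surjection $\phi:H_r(q)\to E_q(1,r)$ through the quotient by the ideal generated by $E(\varepsilon)$ (using Lemma~\ref{tl-alternator}), identify that quotient with $TL_r(q)$ via the standard presentation, and then compare the Catalan-number dimension from \eqnref{dimtl} with $\dim TL_r(q)$ to upgrade the surjection to an isomorphism. The normalisation concern you flag is exactly the point the paper dispatches by noting that $E(\varepsilon)$ is carried to the idempotent $E_1$ of \cite[(4.9)]{GL04} under the Hecke-algebra automorphism $T_i\mapsto -T_i+q-q^{-1}$, so your proposal is correct and coincides with the paper's reasoning.
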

\begin{proof}
We have seen above that $E_q(1,r)$ is generated as $\CK$-algebra by
the endomorphisms $T_1,\dots,T_{r-1}$. By (\ref{hecke}), they
generate a quotient of the Hecke algebra $H_r(q)$ and by Lemma
\ref{tl-alternator}, that quotient is actually a quotient of
$H_r(q)/I$, where $I$ is the ideal generated by the element
$E(\varepsilon)$ defined above. But up to the automorphism
$T_i\mapsto T_i'=-T_i+q-q^{-1}$, this is precisely the idempotent
$E_1$ of \cite[(4.9)]{GL04}. It follows (see, e.g.
\cite[(4.17]{GL04}) that the quotient $H_r(q)/I$ is isomorphic to
$TL_r(q)$.

But this latter algebra is well known (cf. \cite{GL96} or
\cite{GL03}) to have dimension $\frac{1}{r+1}\binom {2r}{r}$, which
by (\ref{dimtl}) above is the dimension of $E_q(1,r)$. The theorem
follows.
\end{proof}

It follows that in this case, the endomorphism algebra has a well
understood cellular structure (see \cite{GL96}).

\section{The case $n=2$: action of the BMW algebra}

In this section we take $V$ to be $V_q(2)$, the three-dimensional
irreducible module for $\cU_q(\fsl_2)$. In accord with the notation
of the last section, we write
$E_q(2,r):=\End_{\cU_q(\fsl_2)}V_q(2)^{\otimes r}$.

\subsection{The setup, and some relations} In this situation,
$V\otimes V\cong V_0\oplus V_1\oplus V_2$, where $V_0$ is the
trivial module, and $V_1,V_2$ are respectively the three and five
dimensional irreducible modules. As above, we therefore have
operators $R_i$, $P_i(j)$ ($i=1,\dots,r-1;\;j=0,1,2$) on $V^{\otimes
r}$, where $P_i(j)$ is the projection $V\otimes V\to V_j$, applied
to the $(i,i+1)$ factors of $V^{\otimes r}$, appropriately tensored
with the identity endomorphism of $V$.

The $R_i$ here satisfy the braid relations (as they always do), and
the cubic relation
\begin{equation}\label{cubicr}
(R_i-q^{-4})(R_i+q^{-2})(R_i-q^2)=0.
\end{equation}

Now if $L$ is any strongly multiplicity free $\cU_q$-module such
that the trivial module $L_0$ is a summand of $L\otimes L$, and
$f\in\End_{\cU_q}(L\otimes L)$, then writing $P_i(0)$ for the
projection $L\otimes L\to L_0$, applied to the $(i,i+1)$ components
of $L^{\otimes r}$, we have
\begin{equation}\label{proj-gen}
P_i(0)f_{i\pm 1}P_i(0)= \frac{1}{(\dim_q(L))^2}\tau_{q,L\otimes
L}(f)P_i(0),
\end{equation}
where $\tau_{q,M}$ denotes the quantum trace of an endomorphism of
the $\cU_q$-module $M$, and $f_i$ is $f$ applied to the $(i,i+1)$
components on $L^{\otimes r}$.

To apply (\ref{proj-gen}) to the case when $f=\check R$, we shall
use
\begin{equation}\label{qtracer}
\begin{aligned}
\tau_{q,V(n)_q^{\otimes 2}}(\check R)&=q^{\frac{1}{2}n(n+2)}[n+1]_q\\
&=q^{\frac{1}{2}n(n+2)}\frac{q^{n+1}-q^{-(n+1)}}{q-q\inv}.\\
\end{aligned}
\end{equation}
This may be proved in several different ways, including the use of
the explicit expression given in Proposition \ref{projections} for
$\check R$.

Applying (\ref{proj-gen}) to the cases $f=\check R$ and $f=P(0)$ in
turn, we obtain for our case $L=V_q(2)$,
\begin{equation}\label{braidpr}
P_i(0)R_{i\pm 1}P_i(0)=q^{4}[3]_q\inv P_i(0)\text{ for }i=1,\dots,r.
\end{equation}
and
\begin{equation}\label{braidpp}
P_i(0)P_{i\pm 1}(j)P_i(0)=[2j+1]_q[3]_q^{-2} P_i(0) \text{ for
}i=1,\dots,r\;\;\text{ and }j=0,1,2.
\end{equation}

In the equations (\ref{braidpr}),(\ref{braidpp}), the applicable
range of values for $i$ is understood to be such that $P_{k(i)}(j)$
and $R_{k(i)}$ makes sense for the relevant functions $k(i)$ of $i$.

Since $\check R$ acts on $V_0,V_1$ and $V_2$ respectively as
$q^{-4},-q^{-2}$ and $q^2$, we also have
\begin{equation}\label{p0}
P_i(0)=\frac{q^8(R_i+q^{-2})(R_i-q^2)}{(1+q^2)(1-q^6)}.
\end{equation}
\subsection{The BMW algebra}\label{bmw-basic}

We recall some basic facts concerning the BMW algebra,
suitably adapted to our context. Let
$\CK=\C(q^{\frac{1}{2}})$ as above, and let $\CA$ be the ring
$\C[y^{\pm 1},z]$, where $y,z$ are indeterminates.

The BMW algebra $BMW_r(y,z)$ over $\CA$ is the associative
$\CA$-algebra with generators $g_1^{\pm 1},\dots,g_{r-1}^{\pm
1}$ and $e_1,\dots,e_{r-1}$, subject to the following relations:

The braid relations for the $g_i$:
\begin{equation}\label{braidgi}
\begin{aligned}
g_ig_j&=g_jg_i\text{ if }|i-j|\geq 2\\
g_ig_{i+1}g_i&=g_{i+1}g_ig_{i+1} \text{ for }1\leq i\leq r-1;\\
\end{aligned}
\end{equation}
The Kauffman skein relations:
\begin{equation}\label{kauffman}
g_i-g_i\inv=z(1-e_i)\text { for all }i;
\end{equation}
The de-looping relations:
\begin{equation}\label{delooping}
\begin{aligned}
&g_ie_i=e_ig_i=ye_i;\\\
&e_ig_{i-1}^{\pm 1}e_i=y^{\mp 1}e_i;\\
&e_ig_{i+1}^{\pm 1}e_i=y^{\mp 1}e_i.\\
\end{aligned}
\end{equation}

The next four relations are easy consequences of the previous three.
\begin{eqnarray}
&&e_ie_{i\pm 1}e_i=e_i; \label{bmwtl}\\
&&(g_i-y)(g_i^2-zg_i-1)=0; \label{cubic}\\
&&ze_i^2=(z+y\inv -y)e_i,\quad  
\label{esquared}\\
&&-yze_i=g_i^2-zg_i-1. \label{equadg}
\end{eqnarray}

It is easy to show that $BMW_r(y,z)$ may be defined using the
relations (\ref{braidgi}), (\ref{delooping}), (\ref{cubic}) and
(\ref{equadg}) instead of (\ref{braidgi}), (\ref{kauffman}) and
(\ref{delooping}), i.e. that (\ref{kauffman}) is a consequence of
(\ref{cubic}) and (\ref{equadg}).
 
We shall require a particular specialisation of $BMW_r(y,z)$
to a subring $\CA_q$ of $\CK$, which is defined as follows.
Let $\CS$ be the multiplicative subset of 
$\C[q, q^{-1}]$
generated by $[2]_q$, $[3]_q$ and $[3]_q-1$. Let $\CA_q:=\C[q,
q^{-1}]_\CS:=\C[q,q\inv, [2]_q\inv,[3]_q\inv, (q^2+q^{-2})\inv]$
be the localisation of $\C[q, q^{-1}]$ at $\CS$. 

Now let $\psi:\C[y^{\pm 1},z]\lr\CA_q$ be the homomorphism 
defined by $y\mapsto q^{-4}$, $z\mapsto q^2-q^{-2}$. Then 
$\psi$ makes $\CA_q$ into an $\CA$-module, and the 
specialisation $BMW_r(q):=\CA_q\otimes_{\CA} BMW_r(y,z)$
is the $\CA_q$-algebra with generators which we denote, by abuse of
notation, $g_i^{\pm 1},e_i$ ($i=1,\dots,r-1$) and relations
(\ref{braidgiq}) below, with the relations (\ref{extrarel}) being 
consequences of (\ref{braidgiq}). 

\begin{equation}\label{braidgiq}
\begin{aligned}
g_ig_j&=g_jg_i\text{ if }|i-j|\geq 2\\
g_ig_{i+1}g_i&=g_{i+1}g_ig_{i+1} \text{ for }1\leq i\leq r-1\\
g_i-g_i\inv&=(q^2-q^{-2})(1-e_i)\text { for all }i\\
g_ie_i&=e_ig_i=q^{-4}e_i\\
e_ig_{i-1}^{\pm 1}e_i&=q^{\pm 4}e_i\\
e_ig_{i+1}^{\pm 1}e_i&=q^{\pm 4}e_i.\\
\end{aligned}
\end{equation}

\begin{equation}\label{extrarel}
\begin{aligned}
e_ie_{i\pm 1}e_i&=e_i  \\
(g_i-q^2)(g_i+q^{-2})&=-q^{-4}(q^2-q^{-2})e_i\\
(g_i-q^{-4})(g_i-q^2)(g_i+q^{-2})&=0\\
e_i^2&=(q^2+1+q^{-2})e_i.\\
\end{aligned}
\end{equation}
%


We shall be concerned with the following two specialisations
of $BMW_r(q)$.

\begin{definition}\label{spec}  Let $\phi_q:\CA_q\lr \CK=\C(q^{\frac{1}{2}})$ be the
inclusion map, and let $\phi_1:BMW_r(q)\lr\C$ be the $\C$-algebra homomorphism
defined by $q\mapsto 1$. Define the specialisations
$BMW_r(\CK):=\CK\otimes_{\phi_q}BMW_r(q)$, and 
$BMW_r(1):=\C\otimes_{\phi_1}BMW_r(q)$.
\end{definition}

The next statement is straightforward.

\begin{lem}\label{bmwspec}
\begin{enumerate}
\item The algebra $BMW_r(q)$ may be regarded as an
$\CA_q$-lattice in the $\CK$-algebra $BMW_r(\CK)$.
\item  The specialisation $BMW_r(1)$ 
is isomorphic to the Brauer algebra $B_r(3)$ over
$\C$.
\item Let $\CI$ be the two-sided ideal of $BMW_r(q)$
generated by $e_1,\dots,e_{r-1}$. There is a surjection
$BMW_r(q)\to H_r(q^2)$ of $\CA_q$-algebras with kernel $\CI$,
where $H_r(q^2)$ is the Hecke algebra discussed above (\S \ref{sectionhecke}).
\end{enumerate}
\end{lem}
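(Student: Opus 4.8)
The plan is to verify the three parts of \lemref{bmwspec} more or less in order, since each is a routine check once the right structure is in place. For part (1), I would observe that $BMW_r(q) = \CA_q\otimes_\CA BMW_r(y,z)$ by definition, so to say it is an $\CA_q$-lattice in $BMW_r(\CK) = \CK\otimes_{\phi_q} BMW_r(q)$ amounts to the statement that $BMW_r(q)$ is free as an $\CA_q$-module and that the natural map $BMW_r(q)\to BMW_r(\CK)$ is injective. Both follow from the known fact (Morton--Wassermann, or Enyang) that $BMW_r(y,z)$ is free over $\CA = \C[y^{\pm1},z]$ with an explicit basis indexed by Brauer diagrams; freeness is preserved under the base change $\psi:\CA\to\CA_q$, and since $\CA_q$ is an integral domain with fraction field inside $\CK$, a free module over $\CA_q$ embeds in its scalar extension to $\CK$.

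For part (2), I would start from the same diagram basis: $BMW_r(1) = \C\otimes_{\phi_1} BMW_r(q)$, and setting $q\mapsto 1$ sends the parameters $y = q^{-4}\mapsto 1$ and $z = q^2-q^{-2}\mapsto 0$. I would check that under $q\mapsto 1$ the defining relations \eqnref{braidgiq} specialise exactly to the defining relations of the Brauer algebra $B_r(\delta)$ with loop parameter $\delta = 3$: the skein relation $g_i - g_i^{-1} = (q^2-q^{-2})(1-e_i)$ becomes $g_i = g_i^{-1}$, i.e. $g_i^2 = 1$, so the $g_i$ become the transpositions $s_i$; the de-looping relations $e_i g_i = q^{-4} e_i$ etc.\ become $e_i s_i = e_i$, $e_i s_{i\pm1} e_i = e_i$; and from \eqnref{extrarel} the relation $e_i^2 = (q^2+1+q^{-2})e_i$ becomes $e_i^2 = 3 e_i$, which is precisely the loop value $\delta = 3$. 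This gives a surjection $B_r(3)\to BMW_r(1)$; it is an isomorphism by a dimension (rank) count, since both algebras have $\C$-rank equal to $(2r-1)!! = $ the number of Brauer diagrams, using part (1) (rank is unchanged under specialisation of a free module) and the standard fact about $\dim B_r(3)$.

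For part (3), I would argue that killing the ideal $\CI = (e_1,\dots,e_{r-1})$ collapses the de-looping relations to triviality and turns the cubic relation $(g_i - q^{-4})(g_i - q^2)(g_i + q^{-2}) = 0$ from \eqnref{extrarel} — or, better, the quadratic relation $(g_i - q^2)(g_i + q^{-2}) = -q^{-4}(q^2-q^{-2})e_i$ — into $(g_i - q^2)(g_i + q^{-2}) = 0$ in the quotient. Since the braid relations \eqnref{braidgi} survive, the quotient $BMW_r(q)/\CI$ is generated by invertible elements $g_i$ satisfying the braid relations and the quadratic relation of the Hecke algebra with parameter $q^2$ (after the usual normalisation, $(T_i - q^2)(T_i + q^{-2}) = 0$ is the Hecke quadratic relation for parameter $q^2$), hence is a quotient of $H_r(q^2)$. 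For the reverse, I would exhibit an $\CA_q$-algebra section: the standard fact that $BMW_r(y,z)/(e_i) \cong H_r$ is well known, so specialising via $\psi$ gives $BMW_r(q)/\CI \cong H_r(q^2)$; a clean ranks argument again closes the gap, since $BMW_r(q)$ has rank $(2r-1)!!$, $\CI$ has rank $(2r-1)!! - r!$ over $\CA_q$ (the diagrams with at least one horizontal strand), and $H_r(q^2)$ has rank $r!$.

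The main obstacle is really a bookkeeping one rather than a conceptual one: one must be careful that the localisation at $\CS = \langle [2]_q, [3]_q, [3]_q - 1\rangle$ is exactly what is needed for all the denominators appearing when one inverts $e_i^2 = [3]_q e_i$-type relations (and when forming the idempotent-type elements used to split off $\CI$), so that the rank counts are genuinely valid over $\CA_q$ and survive both specialisations $\phi_q$ and $\phi_1$; in particular one should check that $[3]_q$ does not vanish at $q=1$ (it equals $3$) so that part (2) is not degenerate, which is why $3\in\C^\times$ and the loop parameter is invertible. Everything else — matching relations, invoking freeness of the BMW diagram basis, and the elementary rank arithmetic $(2r-1)!! = r! + \bigl((2r-1)!! - r!\bigr)$ — is routine.
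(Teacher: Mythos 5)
Your proposal is correct and rests on the same key ingredient as the paper's proof: the freeness of $BMW_r(y,z)$ over $\CA$ on a tangle basis $\{T_d\}$ indexed by Brauer $r$-diagrams (the paper cites Xi, Theorem~3.11; you cite Morton--Wassermann or Enyang, which is the same fact). Parts (1) and (2) then follow exactly as you describe; your explicit verification that the relations in \eqnref{braidgiq} and \eqnref{extrarel} specialise at $q=1$ to the defining relations of $B_r(3)$ (in particular $e_i^2 \to 3e_i$) is detail the paper compresses into a single sentence, but it is the right thing to check. The rank count $(2r-1)!!$ on each side, invoking preservation of rank under base change of a free module, closes the argument for (2) cleanly.

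Where you add something the paper does not: the paper's proof explicitly addresses only the first two parts ("Note for the first two statements\dots") and leaves (3) as evidently standard. Your rank-counting argument for (3) is a legitimate and tidy way to fill that in: the de-looping relations trivialise, the relation $(g_i - q^2)(g_i + q^{-2}) = -q^{-4}(q^2-q^{-2})e_i$ collapses to the Hecke quadratic relation with parameter $q^2$, so $BMW_r(q)/\CI$ is a quotient of $H_r(q^2)$; conversely $\CI$ contains all tangles with a horizontal strand, so $BMW_r(q)/\CI$ is generated by the $r!$ permutation tangles; a surjection from a module generated by $r!$ elements onto the free $\CA_q$-module $H_r(q^2)$ of rank $r!$ is then forced to be an isomorphism. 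One small point worth making explicit if you wrote this up carefully: the last step uses that a surjective endomorphism of a finitely generated module over a commutative ring is an isomorphism (Cayley--Hamilton/Nakayama), which justifies the conclusion over $\CA_q$ and not merely after tensoring up to $\CK$. Otherwise your bookkeeping about $\CS$, and the observation that $[3]_q$ does not vanish at $q=1$, is exactly the right sanity check.
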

\begin{proof}
Note for the first two statements, that 
by \cite[Theorem 3.11 and its proof]{X},
$BMW_r(y,z)$ has a basis of ``$r$-tangles'' $\{T_d\}$, 
where $d$ runs over the Brauer $r$-diagrams, which form a 
basis of $B_r(\delta)$ over any ring. The same thing applies
to $BMW_r(y,z)$; thus $BMW_r(q)$ may be thought of as the 
subring of $BMW_r(\CK)$ consisting of the $\CA_q$-linear 
combinations of the $T_d$, while 
$B_r(3)$ is realised as the set of $\C$-linear combinations 
of the diagrams $d$.
\end{proof}
Note that in view of the third relation in
(\ref{extrarel}), the element 
$\frac{e_i}{[3]_q}$ of $BMW_r(q)$ is an idempotent. 
Moreover it follows from (\ref{equadg})
or (\ref{extrarel}) that
\be\label{ei}
\frac{e_i}{[3]_q}=\frac{(g_i-q^2)(g_i+q^{-2})}{(q^{-4}-q^2)(q^{-4}+q^{-2})}.
\ee

Taking into account the cubic relation (\ref{cubic}),
or its specialisation in (\ref{extrarel}), we also have
the idempotents $d_i$ and $c_i$ in $BMW_r(\CK)$, where
\begin{equation}\label{otheridemp}
\begin{aligned}
d_i&=\frac{(g_i-q^2)(g_i-q^{-4})}{(q^{-2}+q^2)(q^{-2}+q^{-4})}\\
c_i&=\frac{(g_i+q^{-2})(g_i-q^{-4})}{(q^2+q^{-2})(q^2-q^{-4})}.\\
\end{aligned}
\end{equation}

\begin{lem}\label{integralidempotents}
If $BMW_r(q)$ is thought of as an $\CA_q$-submodule of 
$BMW_r(\CK)$ as in Lemma \ref{bmwspec}(1),
then the idempotents $e_i[3]_q\inv$, $d_i$
and $c_i$ all lie in $BMW_r(q)$. 
\end{lem}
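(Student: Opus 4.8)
The plan is to verify membership directly from the defining relations of $BMW_r(q)$, exploiting the arithmetic of the localisation $\CA_q$. First I would handle the idempotent $e_i[3]_q^{-1}$: the third relation in \eqnref{extrarel}, namely $e_i^2=(q^2+1+q^{-2})e_i=[3]_q e_i$, shows that $e_i[3]_q^{-1}$ is idempotent, and it visibly lies in $BMW_r(q)$ because $[3]_q^{-1}\in\CA_q$ by construction (recall $\CS$ contains $[3]_q$). So the only real content concerns $d_i$ and $c_i$. The natural approach is to write each of these as an explicit $\CA_q$-linear combination of $1$, $g_i$, $g_i^2$ (equivalently of $1$, $g_i$, $e_i$, using \eqnref{equadg} in its specialised form $-q^{-4}(q^2-q^{-2})e_i=g_i^2-(q^2-q^{-2})g_i-1$), and then check that all the coefficients that appear lie in $\CA_q=\C[q,q^{-1},[2]_q^{-1},[3]_q^{-1},(q^2+q^{-2})^{-1}]$.

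Concretely, I would expand the numerators in \eqnref{otheridemp}. For $d_i$ the denominator is $(q^{-2}+q^2)(q^{-2}+q^{-4})=(q^2+q^{-2})\cdot q^{-6}(q^2+q^{-2})\cdot$— wait, more carefully $q^{-2}+q^{-4}=q^{-4}(q^2+1)$, so the denominator is $(q^2+q^{-2})q^{-4}(q^2+1)$. Since $q^2+1=q\,[2]_q$ and both $[2]_q$ and $q^2+q^{-2}$ are inverted in $\CA_q$, together with $q^{\pm1}$, the reciprocal of this denominator lies in $\CA_q$; hence $d_i\in BMW_r(q)$. Similarly for $c_i$ the denominator is $(q^2+q^{-2})(q^2-q^{-4})=(q^2+q^{-2})q^{-4}(q^6-1)$, and $q^6-1=(q^2-1)(q^4+q^2+1)=(q-q^{-1})q\cdot q^2[3]_q$; since $[2]_q=q+q^{-1}$ divides $q^2-1$ over $\C[q,q^{-1}]$ up to a unit — in fact $q^2-1=(q-q^{-1})q$ and $q-q^{-1}$ is not a unit, but $q^2-q^{-4}=q^{-4}(q^6-1)$ and $q^6-1$ factors through $[3]_q$ and $(q-q^{-1})$... — the point to check is whether $q-q^{-1}$ (equivalently $q^2-1$) is invertible in $\CA_q$. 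It is \emph{not}, so I must be more careful: I would instead observe that $q^2-q^{-4}=(q^2+q^{-2})? $ no. Let me reconsider: the cleanest route is to use that $e_i[3]_q^{-1}$, $d_i$, $c_i$ are mutually orthogonal idempotents summing to $1$ (this follows from the cubic relation \eqnref{cubic}/\eqnref{extrarel} by the standard Lagrange-interpolation identity for three distinct eigenvalues), so $c_i = 1 - e_i[3]_q^{-1} - d_i$; then once $e_i[3]_q^{-1}$ and $d_i$ are known to lie in $BMW_r(q)$, so does $c_i$ automatically, sidestepping the problematic denominator entirely.

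So the revised key steps are: (1) note $e_i[3]_q^{-1}\in BMW_r(q)$ from $[3]_q^{-1}\in\CA_q$ and $e_i^2=[3]_q e_i$; (2) show $d_i\in BMW_r(q)$ by expanding its numerator via the cubic relation into an $\CA_q[g_i]$-expression and checking the denominator $(q^2+q^{-2})q^{-4}(q^2+1)$ is a unit in $\CA_q$ — here I use $q^2+1=q[2]_q$ and the fact that $[2]_q$, $q^2+q^{-2}$, $q$ are all invertible; (3) deduce $c_i=1-e_i[3]_q^{-1}-d_i\in BMW_r(q)$, using that these three elements are orthogonal idempotents with sum $1$ (a consequence of $g_i$ satisfying the cubic with the three distinct roots $q^{-4},q^2,-q^{-2}$, whose pairwise differences $q^{-4}-q^2$, $q^{-4}+q^{-2}$, $q^2+q^{-2}$ are — once one multiplies out — exactly the quantities inverted in $\CA_q$, up to units). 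The main obstacle is the bookkeeping in step (2)–(3): one has to be sure that \emph{every} denominator occurring in the interpolation formulas factors into the generators of $\CS$ (that is $[2]_q$, $[3]_q$, $q^2+q^{-2}$) and powers of $q$, with no stray factor of $q-q^{-1}$ or similar non-unit slipping through; the trick of defining $c_i$ by subtraction is precisely what lets us avoid the one denominator ($q^2-q^{-4}$, which carries a factor $q^2-1$) that is \emph{not} a unit in $\CA_q$.
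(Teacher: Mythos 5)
Your proof is correct and matches the paper's own argument: $e_i[3]_q^{-1}$ is trivially in $BMW_r(q)$; $d_i$ is because its denominator $(q^{-2}+q^2)(q^{-2}+q^{-4})=q^{-3}[2]_q\,(q^2+q^{-2})$ is a unit in $\CA_q$; and $c_i$ follows by subtraction from $e_i[3]_q^{-1}+d_i+c_i=1$, which is the paper's route as well. The only slip is the parenthetical claim that all three pairwise eigenvalue differences are, up to units, exactly the elements inverted in $\CA_q$ --- the difference $q^{-4}-q^2$ carries the non-invertible factor $q-q^{-1}$ --- but you had already observed this obstruction and the subtraction trick is there precisely to avoid it, so the argument stands.
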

\begin{proof} 
It is evident that $e_i[3]_q\inv\in BMW_r(q)$.
Since ${(q^{-2}+q^2)(q^{-2}+q^{-4})}$ 
is invertible in 
$\CA_q$, clearly $d_i\in BMW_r(q)$. But it is easily verified that
$e_i[3]_q\inv + d_i + c_i=1$, whence the result.
\end{proof}

The relevance of the above for the study of endomorphisms is
evident from the next result.

\begin{thm}\label{surjbmwend}
With the above notation, there is a surjection $\eta_q$ from the
algebra $BMW_r(\CK)\to E_q(2,r)$ which takes $e_i$ to
$[3]_qP_i(0)$ and $g_i$ to $R_i$.
\end{thm}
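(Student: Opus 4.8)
The plan is to check that the assignment $e_i\mapsto [3]_qP_i(0)$, $g_i\mapsto R_i$ respects all the defining relations \eqref{braidgiq} of $BMW_r(\CK)$, so that it defines an algebra homomorphism $\eta_q:BMW_r(\CK)\to E_q(2,r)$, and then to argue surjectivity from the fact (established in \S\ref{gen}, via \cite[Theorem 7.5]{LZ}, since $V(2)_q$ is strongly multiplicity free) that the $R_i$ already generate $E_q(2,r)$. Surjectivity is therefore the easy part: once $\eta_q$ is shown to be a well-defined homomorphism, its image contains all the $R_i$, hence is all of $E_q(2,r)$.

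The substance is the verification of the relations. First I would dispatch the braid relations $R_iR_j=R_jR_i$ for $|i-j|\ge 2$ and $R_iR_{i+1}R_i=R_{i+1}R_iR_{i+1}$: these hold for the $R_i$ by \eqref{br}. Next, set $Q_i:=[3]_qP_i(0)$; since $P_i(0)$ is a projection, $Q_i^2=[3]_q^2P_i(0)=[3]_qQ_i$, which is the last relation of \eqref{extrarel}, i.e.\ $e_i^2=(q^2+1+q^{-2})e_i=[3]_qe_i$ — consistent. For the cubic relation on $g_i$: the image $R_i$ satisfies exactly \eqref{cubicr}, namely $(R_i-q^{-4})(R_i+q^{-2})(R_i-q^2)=0$, which is the third relation of \eqref{extrarel} after the specialisation $g_i\mapsto R_i$; and \eqref{p0} shows that $P_i(0)$ equals the normalised quadratic $\frac{q^8}{(1+q^2)(1-q^6)}(R_i+q^{-2})(R_i-q^2)$, so that $[3]_qP_i(0)$ matches the second relation of \eqref{extrarel} once one checks the scalar $\frac{q^8[3]_q}{(1+q^2)(1-q^6)}$ agrees with $-q^{-4}(q^2-q^{-2})$ — a short computation with $[3]_q=q^2+1+q^{-2}$ and $1-q^6=-q^6(q^{-6}-1)$. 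This simultaneously gives the Kauffman skein relation $g_i-g_i\inv=(q^2-q^{-2})(1-e_i)$, since that relation is equivalent to $-q^{-4}(q^2-q^{-2})e_i=g_i^2-(q^2-q^{-2})g_i-1$ by the remark after \eqref{equadg}; alternatively one reads the skein relation directly off the eigenvalue decomposition $\check R=q^{-4}P(0)-q^{-2}P(1)+q^2P(2)$ from Proposition \ref{projections}, together with $\check R\inv=q^4P(0)-q^2P(1)+q^{-2}P(2)$ and $P(0)+P(1)+P(2)=\id$.

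It remains to verify the de-looping relations $P_i(0)R_i=R_iP_i(0)=q^{-4}[3]_q\inv\cdot[3]_qP_i(0)$ — immediate since $\check R$ acts on $V_0$ as $q^{-4}$ — and the two families $e_ig_{i\pm1}^{\pm1}e_i=q^{\pm4}e_i$. The latter are exactly where the quantum-trace computations come in: by \eqref{braidpr} we have $P_i(0)R_{i\pm1}P_i(0)=q^4[3]_q\inv P_i(0)$, and after multiplying through by $[3]_q^2$ this reads $Q_iR_{i\pm1}Q_i=q^4[3]_q\inv\cdot[3]_q^2P_i(0)\cdot[3]_q\inv=q^4Q_i$, wait — one must be careful: $e_ig_{i+1}e_i$ in the BMW algebra should give $q^{-4}e_i$ after specialisation (since $z\mapsto q^2-q^{-2}$ forces $y^{-1}=q^4$ in $e_ig^{+1}e_i=y^{-1}e_i$, but in \eqref{braidgiq} the stated relation is $e_ig_{i+1}^{+1}e_i=q^{+4}e_i$, so $y\mapsto q^{-4}$ gives $y^{-1}=q^4$, consistent). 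Concretely, translating \eqref{braidpr}: $[3]_qP_i(0)\cdot R_{i\pm1}\cdot[3]_qP_i(0)=[3]_q^2\cdot q^4[3]_q\inv P_i(0)=q^4\cdot[3]_qP_i(0)$, i.e.\ $e_i g_{i\pm1}e_i=q^4 e_i$ — matching. For the inverse version $e_ig_{i\pm1}\inv e_i=q^{-4}e_i$ one repeats with $R_{i\pm1}\inv$, whose action on $V_0$ is $q^4$ while on the projector sandwich one uses that $R_{i\pm1}\inv=q^4P_{i\pm1}(0)+\ldots$ and $P_i(0)P_{i\pm1}(0)P_i(0)=[3]_q^{-2}P_i(0)$ from \eqref{braidpp}; then $P_i(0)R_{i\pm1}\inv P_i(0)=q^{-4}[3]_q\inv P_i(0)$, giving $e_ig_{i\pm1}\inv e_i=q^{-4}e_i$. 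Finally \eqref{bmwtl}, $e_ie_{i\pm1}e_i=e_i$, follows from \eqref{braidpp} with $j=0$: $[3]_q^3 P_i(0)P_{i\pm1}(0)P_i(0)=[3]_q^3[3]_q^{-2}P_i(0)=[3]_qP_i(0)=e_i$. The main obstacle is purely bookkeeping: keeping the normalisation constants straight across the specialisation $\psi$ (so that the $[3]_q$ powers and the signs in the Kauffman/cubic relations land correctly), and confirming that no \emph{additional} relation of $BMW_r(\CK)$ beyond those in \eqref{braidgiq} needs separate checking — which is guaranteed by the remark preceding Definition \ref{spec} that \eqref{braidgiq} suffices to present the algebra.
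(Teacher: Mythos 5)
Your proposal takes essentially the same approach as the paper: check the defining relations of $BMW_r$ hold for $R_i$ and $[3]_qP_i(0)$, using the eigenvalue decomposition in Proposition~\ref{projections}, the projection formula~\eqref{p0}, and the sandwiching identities~\eqref{braidpr}/\eqref{braidpp}; then deduce surjectivity from \cite[Theorem 7.5]{LZ}. The paper organises the verification more cleanly by invoking the alternative presentation of $BMW_r(y,z)$ via the relations~\eqref{braidgi}, \eqref{delooping}, \eqref{cubic}, \eqref{equadg}, which lets it avoid checking the Kauffman skein relation and the redundant consequences in~\eqref{extrarel} that you re-derive. One small slip: when matching $[3]_qP_i(0)$ against the second relation of~\eqref{extrarel}, the coefficient $\tfrac{q^8[3]_q}{(1+q^2)(1-q^6)}$ should be compared with the \emph{reciprocal} $\tfrac{1}{-q^{-4}(q^2-q^{-2})}$, not with $-q^{-4}(q^2-q^{-2})$ itself (the quantities are mutual inverses, as one sees by simplifying both to $\tfrac{-q^6}{q^4-1}$); this is only a notational error since the intended identity is clear from context.
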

\begin{proof}
In view of the above discussion, it remains only to show that the
endomorphisms $\eta_q(g_i)=R_i$ and $\eta_q(e_i)=[3]_qP_i(0)$
satisfy the relations (\ref{braidgi}), (\ref{delooping}),
(\ref{cubic}) and (\ref{equadg}) for the appropriate $y$ and $z$.
Now the braid relations (\ref{braidgi}) are always satisfied by the
$R_i$; further, (\ref{cubic}) with the $R_i$ replacing the $g_i$ is
just (\ref{cubicr}). Now a simple calculation shows that in our
specialisation, $z\inv(z+y\inv-y)=[3]_q:=\delta$. It follows that
(\ref{equadg}) may be written
\[
\begin{aligned}
\delta\inv e_i=&\frac{1}{\delta yz}(g_i-q^2)(g_i+q^{-2})\\
=&\frac{-1}{[3]_q q^{-4}(q^2-q^{-2})}(g_i-q^2)(g_i+q^{-2})\\
=&\frac{-(q-q\inv)q^4}{(q^3-q^{-3})(q^2-q^{-2})}(g_i-q^2)(g_i+q^{-2})\\
=&\frac{q^8}{(1+q^{2})(1-q^{6})}(g_i-q^2)(g_i+q^{-2}).\\
\end{aligned}
\]
Thus (\ref{equadg}) with $\delta P_i(0)$ replacing $e_i$ is just
(\ref{p0}). Finally, the first delooping relation follows
immediately from (\ref{cubic}) and (\ref{equadg}), while the other
two follow from (\ref{braidpr}).
\end{proof}

We wish to illuminate which relations are necessary in
addition to those which define $BMW_r(\CK)$, to obtain $E_q(2,r)$,
i.e. we wish to study $\Ker(\eta_q)$.
Note that the specialisation of Lemma \ref{bmwspec} (2) is the
classical limit as $q\to 1$ of $BMW_r(q)$, and that this is just the
Brauer algebra with parameter $\delta_{q\to 1}=3$ in accord with
\cite[\S 3]{LZ}. We shall study $\Ker(\eta_q)$ by first examining
the classical case, and then use specialisation arguments. 
The cellular structure of the algebras involved will play an important 
role in what follows. 

\subsection{Tensor notation and quantum action}\label{qnotation}
In this subsection we establish notation for basis elements of
tensor powers, which is convenient for computation of the actions we
consider. Since the $\fsl_2$-module $V(2)$ is the classical limit at
$q\to 1$ of $V_q(2)$, we do this for the quantum case, and later
obtain the classical one by putting $q=1$.

Maintaining the notation of section \ref{basic}, and proceeding
as in the proof of Lemma \ref{tl-alternator}, let $v_{-1}\in V_q(2)$
be a basis element of the $-2$ weight space, and let $e,f,k$ be the
generators of $\cU_q$ referred to in \S\ref{basic}. Then $v_0:=ev_{-1}$
and $v_{1} :=ev_0$ have weights $0,2$ respectively, and
$\{v_0,v_{\pm 1}\}$ is a basis of $V_q(2)$. Moreover it is easily
verified that $fv_1=(q+q\inv)v_0$ and $fv_0=(q+q\inv)v_{-1}$. The
tensor power $V_q(2)^{\otimes r}$ has a basis consisting of elements
$v_{i_1,i_2,\dots,i_r}:=v_{i_1}\otimes v_{i_2} \otimes \dots\otimes
v_{i_r}$. Note that $v_{i_1,i_2,\dots,i_r}$ is a weight element of
weight $2(i_1+\dots +i_r)$ for the action of $\cU_q(\fsl_2)$.

Now $V_q(2)^{\otimes 2}$ has a canonical decomposition
$V_q(2)^{\otimes 2}=L(0)_q\oplus L(2)_q\oplus L(4)_q$, where
$L(i)_q$ is isomorphic to $V(i)_q$ for all $i$. We shall give bases
of the three components, which consist of weight vectors.

\begin{lem}\label{bases}
The three components of $V_q(2)^{\otimes 2}$ have bases as follows.
\begin{enumerate}
\item $L(0)_q$: $v_{-1,1}-q^2v_{0,0}+q^2v_{1,-1}$.
\item $L(2)_q$: $v_{0,1}-q^2v_{1,0};\;v_{-1,1}-v_{1,-1}+(1-q^2)v_{0,0};\;
v_{-1,0}-q^2v_{0,-1}$.
\item $L(4)_q$: $v_{1,1};\;v_{0,1}+q^{-2}v_{1,0};\; v_{-1,1}+(1+q^{-2})v_{0,0}
+q^{-4}v_{1,-1};\;v_{-1,0}+q^{-2}v_{0,-1};$ $\\ \;v_{-1,-1}$.
\end{enumerate}

The corresponding statement for the classical case is obtained by
putting $q=1$ above.

The $R$-matrix $\check R=R_1$ acts on the three components above via
the scalars $q^{-4}, -q^{-2}$ and $q^2$ respectively.
\end{lem}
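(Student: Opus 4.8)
The plan is to verify \lemref{bases} by a direct computation with the comultiplication $\Delta$ recalled in \S\ref{basic}, exploiting the fact that the three summands $L(0)_q,L(2)_q,L(4)_q$ of $V_q(2)^{\otimes 2}$ are uniquely determined by their highest weights. First I would pin down the highest weight vectors: a vector of weight $2\ell$ that is killed by $\Delta(e)=e\otimes k+1\otimes e$ generates $L(2\ell)_q$. Using $\Delta(e)$, $kv_i=q^{2i}v_i$, $ev_{-1}=v_0$, $ev_0=v_1$, $ev_1=0$, one checks $v_{1,1}$ has weight $4$ and is a highest weight vector, so it generates $L(4)_q$; next $v_{0,1}-q^2v_{1,0}$ has weight $2$ and one computes $\Delta(e)(v_{0,1}-q^2v_{1,0})=0$, so it generates $L(2)_q$; finally the weight-$0$ vector $v_{-1,1}-q^2v_{0,0}+q^2v_{1,-1}$ is annihilated by $\Delta(e)$ and generates $L(0)_q$. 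The remaining basis vectors listed in (2) and (3) are then obtained by repeatedly applying $\Delta(f)=f\otimes 1+k^{-1}\otimes f$ to these highest weight vectors, using $fv_1=(q+q^{-1})v_0$, $fv_0=(q+q^{-1})v_{-1}$, $fv_{-1}=0$, and rescaling; one should check that the displayed vectors are indeed (scalar multiples of) the images of the highest weight vectors under the appropriate powers of $\Delta(f)$, and hence span the stated modules. That the three listed sets together with the correct multiplicities exhaust a basis of the $9$-dimensional space $V_q(2)^{\otimes 2}$ is a dimension count, $1+3+5=9$, once one observes the nine vectors are linearly independent (they are, by inspecting which monomials $v_{i,j}$ occur).

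For the classical statement one simply sets $q=1$: the comultiplication, the structure constants $fv_1=2v_0$ etc., and the highest weight vector computations all specialise, and the same linear independence holds, so the $q=1$ vectors give bases of the classical summands $V(0)\oplus V(2)\oplus V(4)$ of $V(2)^{\otimes 2}$.

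For the last assertion, that $\check R=R_1$ acts on $L(0)_q, L(2)_q, L(4)_q$ by $q^{-4}, -q^{-2}, q^2$ respectively, I would invoke \propref{projections} with $n=2$: there $\check R=\sum_{\ell=0}^{2}(-1)^{2+\ell}q^{\frac12(\ell(2\ell+2)-2\cdot4)}P(2\ell)$, and evaluating the exponents at $\ell=0,1,2$ gives the scalars $q^{\frac12(0-8)}=q^{-4}$, $-q^{\frac12(4-8)}=-q^{-2}$, $q^{\frac12(12-8)}=q^2$ on $L(0)_q,L(2)_q,L(4)_q$ respectively. (Equivalently one notes these are exactly the three roots of the cubic \eqnref{cubicr}, matched to the components by their highest weights.) Since $L(i)_q$ has highest weight $i$ and $P(i)$ is the projection onto it, this is immediate; no further computation with the bases is needed, though one could cross-check on $v_{1,1}$ directly.

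The main obstacle is purely computational bookkeeping: correctly applying $\Delta(e)$ and $\Delta(f)$ to linear combinations of the $v_{i,j}$ while keeping track of the $k$-factors $q^{2i}$ and $q^{-2i}$, and getting the $q$-powers in the coefficients right (e.g. the $(1-q^2)$ and $(1+q^{-2})$ terms in the weight-$0$ vectors, which arise because $v_{0,0}$ receives contributions from two different monomials under $\Delta(f)$). There is no conceptual difficulty — it is the standard "highest weight vectors plus Chevalley generators" argument for $\mathfrak{sl}_2$ — so the write-up will consist of exhibiting the highest weight vector in each weight space, then listing the $\Delta(f)$-strings, and finally quoting \propref{projections} for the $\check R$-eigenvalues.
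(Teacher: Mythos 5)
Your proposal is correct and is essentially the paper's own (unwritten-out) argument: the paper merely remarks that the lemma is a routine calculation exploiting that several of the listed vectors are characterised by being annihilated by $e$ and/or $f$, which is exactly your strategy of finding highest weight vectors under $\Delta(e)$ and descending with $\Delta(f)$, with the $\check R$-eigenvalues read off from Proposition~\ref{projections}.
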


The proof is a routine calculation, which makes use of the fact that
several of the basis elements above are characterised by the fact
that they are annihilated by $e$ and/or $f$.

It is useful to record the action of the endomorphism $e_1$ of
$V_q(2)^{\otimes 2}$ (see Theorem \ref{surjbmwend}) on the basis
elements $v_{i,j}$.

\begin{lem}\label{e-action}
The endomorphism $e_1$ of $V_q(2)^{\otimes 2}$ acts as follows. Let
$w_0=-q^2v_{0,0}+q^2v_{1,-1}+v_{-1,1}\in L(0)_q$. Then
$$
e_1v_{i,j}=\left\{
\begin{array}{l l}
w_0, &\text{if }(i,j)=(1,-1),\\
q^{-2}w_0, &\text{if }(i,j)=(-1,1),\\
-q^{-2}w_0, &\text{if }(i,j)=(0,0),\\
0, &\text{if }i+j\neq 0.
\end{array}\right.
$$
\end{lem}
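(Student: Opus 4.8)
The plan is to compute $e_1 v_{i,j}$ directly from the explicit description of $e_1$ as the endomorphism $[3]_q P_1(0)$ of $V_q(2)^{\otimes 2}$, using the bases recorded in \lemref{bases}. First I would observe that $P_1(0)$ is the $\cU_q$-equivariant projection onto the one-dimensional summand $L(0)_q$, which by \lemref{bases}(1) is spanned by $w_0 := v_{-1,1}-q^2 v_{0,0}+q^2 v_{1,-1}$ (note this is the same vector, up to the ordering of terms, as the $w_0$ in the statement). Since $e_1$ is multiplication by $0$ on $L(2)_q\oplus L(4)_q$ and by $[3]_q$ on $L(0)_q$, the only basis vectors $v_{i,j}$ that can have nonzero image are those with a nonzero $L(0)_q$-component, and $L(0)_q$ lies entirely in the zero weight space; since $v_{i,j}$ has weight $2(i+j)$, this immediately forces $e_1 v_{i,j}=0$ whenever $i+j\neq 0$, giving the last case.

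For the zero-weight vectors, the relevant subspace is spanned by $v_{1,-1}, v_{0,0}, v_{-1,1}$, and I would decompose each of these along the weight-zero parts of the three summands. From \lemref{bases} the weight-zero vectors in $L(2)_q$ and $L(4)_q$ are $u_2 := v_{-1,1}-v_{1,-1}+(1-q^2)v_{0,0}$ and $u_4 := v_{-1,1}+(1+q^{-2})v_{0,0}+q^{-4}v_{1,-1}$ respectively, so $\{w_0,u_2,u_4\}$ is a basis of the zero weight space of $V_q(2)^{\otimes 2}$. Expressing $v_{1,-1},v_{0,0},v_{-1,1}$ in this basis and reading off the $w_0$-coordinate of each then gives the coefficient $\lambda_{i,j}$ for which $P_1(0)v_{i,j}=\lambda_{i,j}w_0$; multiplying by $[3]_q$ and simplifying (using $[3]_q=q^2+1+q^{-2}$) should yield the stated scalars $1$, $-q^{-2}$, $q^{-2}$ for $(i,j)=(1,-1),(0,0),(-1,1)$.

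A cleaner variant that avoids inverting a $3\times 3$ matrix: $P_1(0)$ is the orthogonal-type projection onto $\CK w_0$ relative to the direct-sum decomposition, so $\lambda_{i,j}$ is determined by the single linear condition that $v_{i,j}-\lambda_{i,j}w_0$ lie in $L(2)_q\oplus L(4)_q$, i.e. that it be a combination of $u_2$ and $u_4$. One can extract $\lambda_{i,j}$ by pairing against any fixed linear functional that vanishes on $u_2$ and $u_4$ but not on $w_0$; such a functional is easy to write down by hand from the three displayed vectors. Either way, the computation is short and entirely mechanical.

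The only real point requiring care — the ``main obstacle'', such as it is — is bookkeeping: one must be consistent about which vector is called $w_0$ (the statement lists its terms in a different order than \lemref{bases}(1), so one should check the $q^2$ versus $q^{-2}$ normalisations match) and about the precise form of the weight-zero vectors in $L(2)_q$ and $L(4)_q$. Once the basis $\{w_0,u_2,u_4\}$ is fixed and the change of basis from $\{v_{1,-1},v_{0,0},v_{-1,1}\}$ is written down correctly, reading off the $w_0$-coordinates and multiplying by $[3]_q$ finishes the proof with no further input.
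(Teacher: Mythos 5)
Your proposal is correct and follows the same route as the paper's own (very terse) proof: use the fact that $e_1=[3]_q P_1(0)$ kills $L(2)_q\oplus L(4)_q$, acts as $[3]_q$ on $L(0)_q$, and then read off coefficients from the weight-zero vectors of \lemref{bases}. The weight argument for the case $i+j\neq 0$ and the functional $\ell=(1,-q^{-2},q^{-2})$ (in the $v_{1,-1},v_{0,0},v_{-1,1}$ coordinates) indeed give $\ell(w_0)=[3]_q$ and hence the stated scalars.
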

\begin{proof}
Since $e_1$ acts as $0$ on $L(2)_q$ and $L(4)_q$, and as $[3]_q$ on
$L(0)_q$, this follows from an easy computation with the bases in
Lemma \ref{bases}.
\end{proof}

The next result will be used in the next section.

\begin{lem}\label{e2iso}
The $\cU_q(\fsl_2)$-homomorphism $e_2:L(2)_q\otimes L(2)_q \lr
V_q(2)\otimes L(0)_q\otimes V_q(2)$ which is obtained by restricting
$e_2$ to $L(2)_q\otimes L(2)_q\subset V_q(2)^{\otimes 4}$, is an
isomorphism.
\end{lem}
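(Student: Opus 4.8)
The plan is to verify the claimed isomorphism by a dimension count together with a proof of injectivity (or, equivalently, of surjectivity). Since $L(2)_q \cong V_q(2)$, the domain $L(2)_q \otimes L(2)_q$ has dimension $9$, and by \eqnref{square} it decomposes as $L(0)_q \oplus L(2)_q \oplus L(4)_q$. The target $V_q(2) \otimes L(0)_q \otimes V_q(2) \cong V_q(2) \otimes V_q(2)$ (since $L(0)_q$ is the trivial module) also has dimension $9$ and the same decomposition. So it suffices to show $e_2$ restricted to $L(2)_q \otimes L(2)_q$ has trivial kernel, and then it is automatically an isomorphism of $\cU_q(\fsl_2)$-modules.

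First I would set up coordinates. Using the basis $\{v_0, v_{\pm 1}\}$ of $V_q(2)$ from \secref{qnotation} and the explicit basis of $L(2)_q$ from \lemref{bases}(2), I would write down the $9$ basis vectors of $L(2)_q \otimes L(2)_q$ inside $V_q(2)^{\otimes 4}$ as explicit linear combinations of the $v_{i_1,i_2,i_3,i_4}$. Then I would apply $e_2$, which by Theorem \ref{surjbmwend} equals $[3]_q P_2(0)$, i.e.\ $[3]_q$ times the projection onto $L(0)_q$ in the middle two tensor factors. Using the formula for $e_1$ on $V_q(2)^{\otimes 2}$ in \lemref{e-action} (applied to the $(2,3)$ factors), this action is completely explicit: $e_2$ sends $v_{a,i,j,b}$ to $0$ unless $i+j=0$, and otherwise to a scalar multiple of $v_a \otimes w_0 \otimes v_b$ where $w_0 = -q^2 v_{0,0}+q^2 v_{1,-1}+v_{-1,1}$.

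The key computational step is then to evaluate $e_2$ on each of the three isotypic pieces and check nonvanishing. It is cleanest to argue by highest-weight vectors: by Schur's lemma $e_2$ acts as a scalar on each irreducible summand $L(2i)_q$ of $L(2)_q \otimes L(2)_q$ (each occurring with multiplicity one on both sides), so it is enough to pick the highest weight vector of each summand and check its image is nonzero. For $L(4)_q$ the highest weight vector is $(v_{0,1}-q^2v_{1,0})\otimes(v_{0,1}-q^2v_{1,0})$ or the appropriate product giving weight $8$; for $L(2)_q$ and $L(0)_q$ one takes the distinguished weight-$4$ and weight-$0$ combinations. Applying \lemref{e-action} in the middle factors and tracking which monomials survive the condition $i+j=0$, one gets in each case a nonzero multiple of a vector of the form $v_{\ast}\otimes w_0 \otimes v_{\ast}$; here one uses that $[3]_q = q^2+1+q^{-2}$ and the other relevant factors are units in $\CA_q$ (indeed nonzero in $\CK$), so no cancellation occurs. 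Since all three scalars are nonzero, $e_2$ is injective on $L(2)_q\otimes L(2)_q$, hence an isomorphism.

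The main obstacle is purely bookkeeping: organizing the expansion of the tensor products of the $L(2)_q$-basis vectors and correctly applying \lemref{e-action} in the inner factors without sign or $q$-power errors, particularly for the weight-$0$ and weight-$4$ vectors where several monomials $v_{a,i,j,b}$ with $i+j=0$ contribute. Restricting attention to highest weight vectors of each summand keeps this manageable, since then only a handful of monomials appear. One should also double-check at the outset that each $L(2i)_q$ really does occur with multiplicity exactly one in $L(2)_q \otimes L(2)_q \cong V_q(2)^{\otimes 2}$, which is immediate from \eqnref{square}, so that the Schur's-lemma reduction is valid.
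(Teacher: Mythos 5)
Your approach is correct and genuinely different from the paper's. The paper's proof is a direct computation: it writes down the $9\times 9$ matrix of $e_2$ in the bases $\{u_i\otimes u_j\}$ and $\{v_i\otimes w_0\otimes v_j\}$ and computes its determinant to be $\pm q^{-4}(q^2+q^{-2}-1)(q^4+1-q^{-2}+q^{-4})\neq 0$. You instead exploit $\cU_q$-equivariance: since both source and target are isomorphic to $V_q(2)^{\otimes 2}\cong L(0)_q\oplus L(2)_q\oplus L(4)_q$, which is multiplicity free, Schur's lemma reduces the problem to showing $e_2$ is nonzero on each of the three isotypic pieces. This is conceptually lighter and replaces a $9\times 9$ determinant by at most three small checks.

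A few points to tighten. First, a slip in the weights: the highest weight vector $u_1\otimes u_1$ of the $L(4)_q$ summand of $L(2)_q\otimes L(2)_q$ has weight $4$ (not $8$), and the $L(2)_q$ summand has a weight-$2$ (not weight-$4$) highest weight vector; since $L(2)_q\cong V_q(2)$ has weights $2,0,-2$, the diagonal tensor has weights ranging from $4$ down to $-4$. Second, for the $L(4)_q$ summand the highest weight vector is trivially $u_1\otimes u_1$ (the unique weight-$4$ vector), and a quick check using Lemma~\ref{e-action} shows $e_2(u_1\otimes u_1)=v_1\otimes w_0\otimes v_1\neq 0$. But for the $L(2)_q$ and $L(0)_q$ summands the highest weight vectors are genuine linear combinations of the $u_i\otimes u_j$ that you would have to determine by solving a small linear system. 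A cleaner variant of your idea, avoiding this step, is to note that it suffices to show $e_2$ is injective on each weight space: on the $2$-dimensional weight-$2$ space spanned by $u_1\otimes u_0$ and $u_0\otimes u_1$ one computes the $2\times 2$ matrix
\[
\begin{pmatrix}1 & -q^{-2}(1-q^2)\\ 1-q^2 & 1\end{pmatrix},
\]
with determinant $q^2+q^{-2}-1\neq 0$ (and similarly a $3\times 3$ check on the weight-$0$ space). These factors are visibly the ones that appear in the paper's $9\times 9$ determinant, so the two arguments are consistent. Either version of your plan, once the small computations are actually carried out, constitutes a valid proof.
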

\begin{proof}
Let $u_i$ be the weight vector of weight $2i$ of $L(2)_q$ which is
given in Lemma \ref{bases} ($i=0,\pm 1$). Then $L(2)_q\otimes
L(2)_q$ has basis $\{u_{i,j}:=u_i\otimes u_j\mid i,j=0,\pm 1\}$.
Similarly, $V_q(2)\otimes L(0)_q\otimes V_q(2)$ has basis
$\{x_{i,j}:=v_i\otimes w_0\otimes v_j\mid i,j=0,\pm 1\}$.

Now from Lemma \ref{e-action}, $e_2v_{i,j,a,b}=0$ unless $j+a=0$.
This fact may be used to easily compute $e_2u_{i,j}$ in terms of the
$x_{a,b}$. The resulting $9\times 9$ matrix of the linear map $e_2$
is then readily seen to have determinant $\pm
q^{-4}(q^2+q^{-2}-1)(q^4+1-q^{-2}+q^{-4})$, which is non-zero,
whence the result.
\end{proof}

\begin{cor}\label{e2classical}
The statement in Lemma \ref{e2iso} remains true in the classical
case ($q=1$).
\end{cor}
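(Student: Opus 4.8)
The plan is to deduce the corollary from the computation already performed in the proof of \lemref{e2iso}, by specialising at $q=1$. The key observation is that every ingredient of that argument is defined over $\C[q,q^{-1}]$ (in fact over $\CA_q$, which admits the homomorphism $q\mapsto 1$ to $\C$ of Definition~\ref{spec}, since $[2]_q$, $[3]_q$, $[3]_q-1$ and $q^2+q^{-2}$ all take nonzero values at $q=1$). Indeed, the weight vectors $u_0,u_{\pm 1}$ spanning $L(2)_q$ and the vector $w_0$ spanning $L(0)_q$ listed in \lemref{bases} have coefficients in $\C[q,q^{-1}]$ when written in the standard basis $\{v_{i_1,\dots,i_r}\}$, and by \lemref{e-action} the endomorphism $e_2$ of $V_q(2)^{\otimes 4}$ sends standard basis vectors to $\C[q,q^{-1}]$-combinations of standard basis vectors. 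Hence the $9\times 9$ matrix of $e_2\colon L(2)_q\otimes L(2)_q\to V_q(2)\otimes L(0)_q\otimes V_q(2)$ appearing there has entries in $\C[q,q^{-1}]$.

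The steps, in order, would then be: first, invoke the last assertion of \lemref{bases} to see that putting $q=1$ in the displayed vectors produces $\C$-bases of the classical modules, and recall that the classical decomposition $V(2)^{\otimes 2}=V(0)\oplus V(2)\oplus V(4)$ is \eqnref{square} with $n=2$; second, conclude that the matrix of the classical map $e_2$ with respect to these bases is the evaluation at $q=1$ of the matrix above; third, read off that its determinant is the value at $q=1$ of the Laurent polynomial $\pm q^{-4}(q^2+q^{-2}-1)(q^4+1-q^{-2}+q^{-4})$ from the proof of \lemref{e2iso}, namely $\pm 1\cdot 1\cdot 2=\pm 2\neq 0$; and finally, since source and target are both $9$-dimensional, a nonzero determinant forces $e_2$ to be an isomorphism.

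I expect no genuine obstacle here. The only point needing care is the justification that the classical bases really are the $q=1$ specialisations of the quantum ones, so that the determinant computation transfers verbatim; but this is precisely what \lemref{bases} provides, together with the fact that $\CA_q$ specialises cleanly at $q=1$. A slightly more conceptual alternative would be to regard $e_2$ as a map of free $\CA_q$-modules and appeal to the lower semicontinuity of rank under specialisation, but since the relevant determinant has already been computed explicitly, direct evaluation at $q=1$ is the shortest route.
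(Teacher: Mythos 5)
Your proof is correct and takes essentially the same approach as the paper: the paper's own proof of Corollary~\ref{e2classical} is the one-line observation that the determinant computed in the proof of Lemma~\ref{e2iso} does not vanish at $q=1$, which is exactly the conclusion of your argument (with the value $\pm 2$ correctly computed). The additional bookkeeping you supply about $\CA_q$-coefficients and the compatibility of bases under specialisation is a careful unpacking of what the paper leaves implicit, but it is the same route, not a different one.
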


This is clear since the determinant in the proof of Lemma
\ref{e2iso} does not vanish at $q=1$.


\section{The radical of a cellular algebra}\label{radical}


In the next section we shall meet some cellular algebras which
are not semisimple. This section is devoted to proving some general
results about such algebras,  which we use below.
In this section only, we take $B=B(\Lambda,M,C,^*)$
to be any cellular algebra over a field $\bF$,
and prove some general results concerning its radical $\CR$.
These results may be of some interest independently of the
rest of this work. 
We assume that the reader has some acquaintance with the general
theory of cellular algebras (see \cite[\S\S 1-3]{GL96});
notation will be as is standard in cellular theory.
In particular, for any element $\lambda\in\Lambda$, the corresponding
cell module will be denoted by $W(\lambda)$ and its radical with respect
to the canonical invariant bilinear form $\phi_\lambda$ by $R(\lambda)$.
Since there is no essential loss of generality,
we shall assume for ease of exposition, that $B$ is quasi-hereditary.

Let $\lambda\in\Lambda$, and write $W(\lambda)^*$ for the
dual of $W(\lambda)$; this is naturally a right $B$-module, and
we have a vector space monomorphism (cf. \cite[(2.2)(i)]{GL96})
$C^\lambda:W(\lambda)\otimes_\bF W(\lambda)^*\lr B$ defined by
\be\label{clambda}
C^\lambda(C_S\otimes C_T)=C^\lambda_{S,T}\text{ for }S,T\in M(\lambda).
\ee

Denote the image of $C^\lambda$ by $B(\{\lambda\})$. This is a subspace
of $B$, isomorphic as $(B,B)$-bimodule to $B(\leq\lambda)/B(<\lambda)$
(see [{\it loc. cit.}]{GL96}), and we have a vector space isomorphism
\begin{equation}\label{add}
B\overset{\sim}{\lr}\oplus_{\lambda\in\Lambda}B(\{\lambda\}).
\end{equation}

Note that $W(\lambda)$ and $W(\lambda)^*$ are equal as sets. We shall
therefore differentiate between them only when actions are relevant.

Now let $\pi: B\lr \overline B:=B/\CR$ be the natural map
from $B$ to its largest semisimple quotient. Then
$\overline B\cong\oplus_{\lambda\in\Lambda}\overline B(\lambda)$,
where $\overline B(\lambda)\cong M_{l_\lambda}(\bF)\cong \End_\bF(L(\lambda))$.
Thus $\pi$ may be written $\pi=\oplus\pi_\lambda$, where
$\pi_\lambda:B\lr\End_\bF(L(\lambda))$ is the representation of $B$
on $L(\lambda)$. We collect some elementary observations in the next Lemma.

\begin{lem}\label{annihilators}
\begin{enumerate}
\item The restriction $\pi_\lambda:B(\{\lambda\})\lr \End_\bF(L(\lambda))$
is a surjective linear map for each $\lambda\in\Lambda$.
\item Let $A$ be any semisimple $\bF$-algebra and let $\sigma:B\lr A$ be a surjective
homomorphism. Then $\sigma$ factors through $\pi$ as shown.

\be\label{annihil}
\psmatrix[colsep=18mm,rowsep=15mm]
  B & &\overline B\cong\oplus_{\lambda}\overline B(\lambda)\\ & A
  \psset{nodesep=2pt}\everypsbox{\scriptstyle}
  \ncline{->}{1,1}{2,2}\tlput{\sigma}
  \ncline
  {->}{1,3}{2,2}\trput{\overline\sigma}
  \ncline{->}{1,1}{1,3}^{\pi}
\endpsmatrix
\ee
\item The restriction $\overline\sigma_\lambda$ of $\overline \sigma$
to $\overline B(\lambda)$ is either zero or an isomorphism.
\item Denote
by $\Lambda^0$ the set $\{\lambda\in\Lambda\mid \overline\sigma_\lambda
\text{ is non-zero}\}$. Let $J=\Ker(\sigma)$. Then $\pi_\lambda(J)=0$
if and only if $\lambda\in\Lambda^0$.
\item The radical $\CR$ is the set of elements of $B$ which act 
as zero on each irreducible module $L(\lambda)$.

\end{enumerate}
\end{lem}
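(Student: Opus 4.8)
The plan is to handle the five assertions in turn; only (1) requires genuine cellular-algebra input, while (2)--(5) are formal consequences of the block decomposition $\overline B\cong\bigoplus_\lambda\overline B(\lambda)$ of the semisimple quotient and the equality $\Ker\pi=\CR$. For (1), I would first recall from \cite{GL96} that the two-sided ideal $B(<\lambda)$ annihilates $W(\lambda)$, hence also $L(\lambda)=W(\lambda)/R(\lambda)$, from both sides; consequently the restriction of $\pi_\lambda$ to the ideal $B(\leq\lambda)$ factors through $B(\leq\lambda)/B(<\lambda)$, which by cellular theory is the $(B,B)$-bimodule $W(\lambda)\otimes_\bF W(\lambda)^*$, the basis element $C^\lambda_{S,T}$ acting on $L(\lambda)$ by $\overline{C_U}\mapsto\overline{\phi_\lambda}(\overline{C_T},\overline{C_U})\,\overline{C_S}$, where $\overline{\phi_\lambda}$ is the nondegenerate form induced by $\phi_\lambda$ on $L(\lambda)$. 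Since $\overline{\phi_\lambda}$ is nondegenerate and the $\overline{C_S}$ span $L(\lambda)$, these rank-one operators span all of $\End_\bF(L(\lambda))$; this is where the splitting-field hypothesis implicit in $\overline B(\lambda)\cong\End_\bF(L(\lambda))$ is used. As $\pi_\lambda(B(<\lambda))=0$ and $B(\leq\lambda)=B(<\lambda)\oplus B(\{\lambda\})$ as vector spaces (by the decomposition \eqnref{add}), we conclude $\pi_\lambda(B(\{\lambda\}))=\pi_\lambda(B(\leq\lambda))=\End_\bF(L(\lambda))$, which is (1).

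For (2): $\CR$ is the smallest ideal of $B$ with semisimple quotient, and $A\cong B/\Ker\sigma$ is semisimple, so $\Ker\sigma\supseteq\CR$; hence $\sigma$ factors as $\overline\sigma\circ\pi$, with $\overline\sigma$ surjective because $\sigma$ is. For (3): $\overline B(\lambda)$ is a two-sided ideal of $\overline B$ which is a simple ring with identity the central idempotent $e_\lambda$, so the restriction $\overline\sigma_\lambda=\overline\sigma|_{\overline B(\lambda)}$ is a ring homomorphism out of a simple ring, hence either zero or injective. When it is injective its image is a simple subalgebra of $A$; moreover the images $\overline\sigma(\overline B(\mu))$, $\mu\in\Lambda$, are pairwise orthogonal (since the $\overline B(\mu)$ are) and their sum is $A$ by surjectivity of $\overline\sigma$, so the nonzero ones are exactly the simple two-sided ideals (blocks) of $A$. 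Thus $\overline\sigma_\lambda$, when nonzero, is an isomorphism of $\overline B(\lambda)$ onto a block of $A$.

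For (4), write $p_\lambda:\overline B\to\overline B(\lambda)$ for the projection, so that $\pi_\lambda=p_\lambda\circ\pi$ under the identification $\overline B(\lambda)\cong\End_\bF(L(\lambda))$. Surjectivity of $\pi$ gives $\pi(J)=\pi\bigl(\Ker(\overline\sigma\circ\pi)\bigr)=\Ker\overline\sigma$. Since $\Ker\overline\sigma$ is an ideal of $\bigoplus_\mu\overline B(\mu)$ it equals $\bigoplus_\mu(\Ker\overline\sigma\cap\overline B(\mu))$, and each summand is an ideal of the simple ring $\overline B(\mu)$, so it is all of $\overline B(\mu)$ when $\overline\sigma_\mu=0$ and is $0$ when $\overline\sigma_\mu\neq0$; that is, $\Ker\overline\sigma=\bigoplus_{\mu\notin\Lambda^0}\overline B(\mu)$. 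Applying $p_\lambda$ yields $\pi_\lambda(J)=0$ precisely when $\lambda\in\Lambda^0$, which is (4). Finally (5) is immediate: an element of $B$ acts as zero on every $L(\lambda)$ iff it lies in $\Ker\pi_\lambda$ for all $\lambda$, i.e. in $\bigcap_\lambda\Ker\pi_\lambda=\Ker(\bigoplus_\lambda\pi_\lambda)=\Ker\pi=\CR$.

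The one genuinely nontrivial step is (1): one must carefully unwind how the cellular data $C^\lambda_{S,T}$ act on the simple head $L(\lambda)$ and invoke nondegeneracy of the induced form $\overline{\phi_\lambda}$. Everything in (2)--(5) is then routine bookkeeping with the semisimple block decomposition $\overline B=\bigoplus_\lambda\overline B(\lambda)$ and with $\Ker\pi=\CR$.
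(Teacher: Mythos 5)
Your proof is correct and follows essentially the same route as the paper's (very terse) argument: part (1) via the standard cellular formula $C^\lambda_{S,T}\cdot C_U=\phi_\lambda(T,U)C_S$ together with nondegeneracy of the induced form on $L(\lambda)$ (which is what the paper's citation of the cyclic nature of cell modules amounts to), and parts (2)--(5) as formal bookkeeping with the block decomposition $\overline B\cong\bigoplus_\lambda\overline B(\lambda)$ and the identity $\Ker\pi=\CR$. No gaps; you have simply written out in full what the paper dispatches in three lines.
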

\begin{proof}
The statement (1) follows from the cyclic nature of the cell
modules (\cite[(2.6)(i)]{GL96}). The only other statement
deserving of comment is (4), which follows immediately from
the observation that $A\cong B/J\cong\overline B/\overline J$,
where $\overline J=J/\CR$ is a two-sided ideal of the semisimple
algebra $\overline B$. The ideal $\overline J$ therefore acts
trivially in precisely those irreducible representations of
$B$ which ``survive'' in the quotient, and non-trivially in the others.
Note that (5) follows immediately from (4). 
\end{proof}

\begin{cor}\label{kereta} Let $\eta:B\lr \End_\bF(W)$ be a 
representation of $B$ in the semisimple $B$-module $W$, 
and write $E=\im(\eta)$.
Let $N=\Ker(\eta:B\lr E)$. Then $E\cong B/N
\cong\oplus_{\lambda\in\Lambda^0}\ov B(\lambda)$,
where $\Lambda^0$ is the set of $\lambda\in\Lambda$ such that $L(\lambda)$
is a direct summand of $W$, regarded as an $E$-module.
Moreover $\Lambda^0$
is characterised as the set of $\lambda\in \Lambda$ such that $N$
acts trivially on $L(\lambda)$.
\end{cor}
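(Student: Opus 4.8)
The plan is to deduce Corollary~\ref{kereta} from Lemma~\ref{annihilators} by taking $A=E$ and $\sigma=\eta$, once we check the hypotheses of that Lemma apply. First I would observe that, since $W$ is a semisimple $B$-module, its image algebra $E=\im(\eta)\subseteq\End_\bF(W)$ is a semisimple $\bF$-algebra: indeed $E$ acts faithfully on the semisimple module $W$, so its Jacobson radical, which annihilates every composition factor of $W$, must be zero. Moreover $\eta$ is by construction a surjection $B\lr E$. Thus we are exactly in the situation of Lemma~\ref{annihilators}(2)--(4) with $A=E$, $\sigma=\eta$, and $J=N=\Ker(\eta)$.

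Next I would apply Lemma~\ref{annihilators}(2): $\eta$ factors as $\ov\eta\circ\pi$, so $E\cong\ov B/\ov N$ where $\ov N=N/\CR$ is a two-sided ideal of the semisimple algebra $\ov B\cong\oplus_{\lambda\in\Lambda}\ov B(\lambda)$. By Lemma~\ref{annihilators}(3) each restriction $\ov\eta_\lambda$ is either zero or an isomorphism, so $\ov N=\oplus_{\lambda\notin\Lambda^0}\ov B(\lambda)$ and hence $E\cong B/N\cong\oplus_{\lambda\in\Lambda^0}\ov B(\lambda)$, where $\Lambda^0=\{\lambda\mid\ov\eta_\lambda\neq 0\}$. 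It remains to identify $\Lambda^0$ in the two ways claimed. The characterisation ``$N$ acts trivially on $L(\lambda)$'' is immediate from Lemma~\ref{annihilators}(4) (with $\sigma=\eta$, $J=N$), since $\pi_\lambda(N)=0$ precisely means $N$ annihilates $L(\lambda)$. For the statement that $\Lambda^0$ consists of those $\lambda$ with $L(\lambda)$ a direct summand of $W$ as $E$-module, I would argue as follows: since $W$ is semisimple as a $B$-module and $N$ acts as zero on $W$, $W$ is naturally a semisimple $E$-module, and the irreducible $E$-modules are exactly the $L(\lambda)$ with $\lambda\in\Lambda^0$ (these are the simple modules of $\ov B/\ov N\cong E$). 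An irreducible $B$-module $L(\mu)$ occurs as a summand of $W$ if and only if it is not killed by $N$ — equivalently $\mu\in\Lambda^0$ — because any composition factor $L(\mu)$ with $\mu\notin\Lambda^0$ would force $\ov B(\mu)$ to act nontrivially on $W$, contradicting $\eta(\ov B(\mu))\subseteq\ov\eta_\mu(\ov B(\mu))=0$; conversely if $\mu\in\Lambda^0$ then $\ov\eta_\mu$ is an isomorphism onto a simple factor $M_{l_\mu}(\bF)$ of $E$, whose unique simple module $L(\mu)$ must appear in the faithful $E$-module $W$.

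The only mild subtlety — and the step I expect to need the most care — is the passage between ``$L(\lambda)$ is a summand of $W$ as a $B$-module'' and ``as an $E$-module'': because $N$ kills $W$, the $B$-submodule structure and the $E$-submodule structure of $W$ coincide, so this is really a matter of unwinding definitions rather than a genuine obstacle, but it is the point where one must be careful to invoke the semisimplicity of $W$ (so that ``composition factor'' and ``direct summand'' agree) and the faithfulness of the $E$-action. With these observations assembled, all assertions of the Corollary follow.
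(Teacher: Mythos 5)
Your proof is correct and is precisely the intended argument: the paper states this as a Corollary of Lemma~\ref{annihilators} without a written proof, and your reasoning — first establishing semisimplicity of $E$ from faithfulness on the semisimple module $W$, then invoking parts (2)--(4) of the Lemma with $A=E$, $\sigma=\eta$, $J=N$, and finally reconciling the two descriptions of $\Lambda^0$ via the observation that $B$-submodules and $E$-submodules of $W$ coincide — supplies exactly the details the paper leaves implicit.
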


To study the action of ideals on the $L(\lambda)$ we shall require
the following results.

\begin{lem}\label{rad1} Assume that $B$ is quasi-hereditary; i.e. that
for all $\lambda\in\Lambda$, $\phi_\lambda\neq 0$.
\begin{enumerate}
\item In the notation of (\ref{clambda}), if $x,y$ and $z$ are
elements of $W(\lambda)$, then
$C^\lambda(x\otimes y)z=\phi_\lambda(y,z)x$.
\item If $x$ or $y$ is in $R(\lambda)$, then
$\pi_\lambda(C^\lambda(x\otimes y))=0$.
\item The radical $\CR$ of $B$ has a filtration
$(\CR(\lambda)=\CR\cap B(\leq\lambda))$ by two sided
ideals such that there is an isomorphism of $(B,B)$-bimodules
$$\CR(\lambda)/\CR(<\lambda)\overset{\sim}{\lr}
W(\lambda)\otimes R(\lambda)^*+R(\lambda)\otimes W(\lambda)^*
\subset W(\lambda)\otimes W(\lambda)^*.$$ 

\end{enumerate}
\end{lem}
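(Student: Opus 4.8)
The plan is to prove the three parts in order, since each feeds the next. For part~(1), I would unwind the definition of $C^\lambda$ in terms of the cellular basis. Writing $x=\sum_S a_S C_S$ and $y=\sum_T b_T C_T$, bilinearity reduces the claim to the case $x=C_S$, $y=C_T$, $z=C_U$, where it becomes the assertion $C^\lambda_{S,T}C_U=\phi_\lambda(C_T,C_U)C_S \pmod{B(<\lambda)}$; but since we are computing the action on the cell module $W(\lambda)=B(\le\lambda)/B(<\lambda)$ (so $C_S$ here denotes the image of $C^\lambda_{S,V}$ for a fixed $V$, or equivalently the standard basis vector), this is exactly the defining multiplication rule of a cellular algebra, \cite[(1.7)]{GL96} together with the definition of $\phi_\lambda$ in \cite[(2.3)]{GL96}. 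So part~(1) is essentially a bookkeeping translation.

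For part~(2), recall that $R(\lambda)=\{x\in W(\lambda)\mid \phi_\lambda(x,y)=0\ \forall y\}$ and that $L(\lambda)=W(\lambda)/R(\lambda)$. If $y\in R(\lambda)$, then by part~(1), $C^\lambda(x\otimes y)$ acts on any $z\in W(\lambda)$ as $\phi_\lambda(y,z)x=0$, so it kills $W(\lambda)$ entirely, hence acts as zero on the quotient $L(\lambda)$. If instead $x\in R(\lambda)$, then $C^\lambda(x\otimes y)$ acts on $z$ as $\phi_\lambda(y,z)x\in R(\lambda)$, so its image on $L(\lambda)=W(\lambda)/R(\lambda)$ is zero. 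Either way $\pi_\lambda(C^\lambda(x\otimes y))=0$. (One should be slightly careful here about whether we are acting on $W(\lambda)$ by the left or right regular action and which tensor slot carries the right-module structure; I would fix conventions so that (1) is a statement about the left action on the left cell module, and (2) follows for both the left module $L(\lambda)$ and — by the $*$-symmetry — its dual.)

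For part~(3), the key input is the bimodule isomorphism $B(\{\lambda\})\cong W(\lambda)\otimes_\bF W(\lambda)^*$ from \eqref{add} together with the filtration $B(\le\lambda)$ of $B$ by two-sided ideals. I would set $\CR(\lambda)=\CR\cap B(\le\lambda)$; these are two-sided ideals since $\CR$ is, and they give an exhaustive filtration of $\CR$ indexed compatibly with the cell order. The subquotient $\CR(\lambda)/\CR(<\lambda)$ embeds into $B(\le\lambda)/B(<\lambda)\cong B(\{\lambda\})\cong W(\lambda)\otimes W(\lambda)^*$ as a sub-bimodule. By Lemma~\ref{annihilators}(5), $\CR$ consists exactly of elements acting as zero on every $L(\mu)$; modulo $B(<\lambda)$ an element of $B(\{\lambda\})$ already acts as zero on $L(\mu)$ for $\mu\not\ge\lambda$ (cell-order considerations), so the condition reduces to acting as zero on $L(\lambda)$. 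Under the identification with $W(\lambda)\otimes W(\lambda)^*$, the action of $C^\lambda(x\otimes y)$ on $L(\lambda)$ is $z\mapsto \phi_\lambda(y,z)\,\bar x$, which vanishes identically iff $\bar x=0$ in $L(\lambda)$ (i.e.\ $x\in R(\lambda)$) or $\phi_\lambda(y,-)=0$ on $W(\lambda)$, equivalently on $L(\lambda)$ (i.e.\ $y\in R(\lambda)$). Hence the set of such elements is precisely $W(\lambda)\otimes R(\lambda)^* + R(\lambda)\otimes W(\lambda)^*$, giving the claimed isomorphism. I expect the main obstacle to be the last step: verifying that the subspace of $W(\lambda)\otimes W(\lambda)^*$ annihilating $L(\lambda)$ is \emph{exactly} $W(\lambda)\otimes R(\lambda)^* + R(\lambda)\otimes W(\lambda)^*$ and not something larger. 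This requires knowing that $\phi_\lambda$ induces a \emph{nondegenerate} form on $L(\lambda)=W(\lambda)/R(\lambda)$ (so that $y\mapsto\phi_\lambda(y,-)$ has kernel exactly $R(\lambda)$), plus a small linear-algebra fact: a sum $\sum_i x_i\otimes \psi_i\in W\otimes W^*$ acting as $z\mapsto\sum_i\psi_i(z)x_i$ is zero iff it lies in $(W\otimes R^*) + (R\otimes W^*)$ when $W^*/R^*$ pairs nondegenerately with $W/R$ — which one proves by choosing a basis of $W/R$ and lifting. I would also need to check the whole description is compatible with the bimodule structure (that $\CR(\lambda)/\CR(<\lambda)$ is closed under the two-sided $B$-action), which follows because $\CR$ itself is a two-sided ideal and the filtration steps are intersections with two-sided ideals.
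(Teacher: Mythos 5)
Your arguments for parts (1) and (2) are fine and essentially coincide with the paper's (the paper simply cites (1) to \cite[(2.4)(iii)]{GL96}, and your (2) is word for word the paper's argument). For part (3), however, there is a genuine gap at the step ``modulo $B(<\lambda)$ an element of $B(\{\lambda\})$ already acts as zero on $L(\mu)$ for $\mu\not\ge\lambda$ (cell-order considerations), so the condition reduces to acting as zero on $L(\lambda)$.'' Cell-order considerations give the \emph{opposite} constraint: $\pi_\mu(b)=0$ is automatic for $b\in B(\{\lambda\})$ when $\mu\not\le\lambda$, because $\pi_\beta\bigl(B(\{\gamma\})\bigr)\neq 0$ forces $\gamma\ge\beta$. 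So once you have imposed $\pi_\lambda(b)=0$, the conditions $\pi_\mu(b)=0$ for $\mu<\lambda$ remain open and are \emph{not} automatic; a $b\in B(\{\lambda\})$ killing $L(\lambda)$ need not lie in $\CR$ at all. What has to be shown is that $b$ can be \emph{corrected} by some $b'\in B(<\lambda)$ so that $b+b'$ annihilates every $L(\mu)$ with $\mu<\lambda$ as well, i.e.\ that $b\in\CR+B(<\lambda)$; this is precisely the content of the paper's claim (\ref{recursive}), proved there by a recursive modification (adding $b_\mu\in B(\{\mu\})$ for a maximal $\mu$ where the running sum fails to annihilate $L(\mu)$, and iterating).

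Your reduction can in fact be salvaged without the paper's recursion: by Lemma~\ref{annihilators}(1) each $\pi_\mu$ restricted to $B(\{\mu\})$ is onto $\End_\bF(L(\mu))$, and the cell-order constraint shows $\pi_\mu\bigl(B(<\lambda)\bigr)=0$ unless $\mu<\lambda$; hence the image of $B(<\lambda)$ in $\ov B\cong\oplus_\nu\End_\bF(L(\nu))$ is exactly $\oplus_{\nu<\lambda}\End_\bF(L(\nu))$, so $\CR+B(<\lambda)=\{a\in B\mid\pi_\mu(a)=0\text{ for all }\mu\not<\lambda\}$ and the reduction to $\pi_\lambda(b)=0$ follows. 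But this argument (or the paper's recursive one) must actually be made; the parenthetical ``cell-order considerations'' as written does not supply it, and indeed states the wrong inequality. Once that surjectivity onto $\CR(\lambda)/\CR(<\lambda)$ is in place, your computation that $\{b\in W(\lambda)\otimes W(\lambda)^*:\pi_\lambda(b)=0\}=W(\lambda)\otimes R(\lambda)^*+R(\lambda)\otimes W(\lambda)^*$ (using the nondegeneracy of the form induced on $L(\lambda)$, which is where quasi-heredity enters) correctly closes the argument.
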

\begin{proof}
(1) is just \cite[(2.4)(iii)]{GL96}. To see (2),
note that if $y\in R(\lambda)$, then from (1),
$C^\lambda(x\otimes y)z=\phi_\lambda(y,z)x=0$ for all
$z\in W(\lambda)$. If $x\in R(\lambda)$, then
again by (1), $C^\lambda(x\otimes y)z\in R(\lambda)$,
whence $C^\lambda(x\otimes y)$ acts as $0$ on
$L(\lambda)=W(\lambda)/R(\lambda)$.

Now suppose
$b\in C^\lambda(W(\lambda)\otimes R(\lambda)+R(\lambda)
\otimes W(\lambda))$. Then by (2), $\pi_\lambda(b)=0$.
We shall prove

\be\label{recursive}
 \exists\;\text{ elements } b_{\lambda'}\in B(\{\lambda'\})
\;(\lambda'<\lambda)\text{ such that }\\
b+\sum_{\lambda'<\lambda}b_{\lambda'}\in\CR.
\ee

We do this recursively as follows. Suppose we have a subset
$\Gamma\subseteq\Lambda$ and an element $\sum_{\gamma\in\Gamma}b_\gamma
\in\sum_{\gamma\in\Gamma}B(\{\gamma\})$ such that
for any $\beta\in\Lambda$ which satisfies $\beta\geq\gamma$ for some
$\gamma\in\Gamma$, we have $\pi_\beta(\sum_{\gamma\in\Gamma}b_\gamma)=0$.
We show that if there is $\mu\in\Lambda$ such that
$\pi_\mu(\sum_{\gamma\in\Gamma}b_\gamma)\neq 0$, then we may increase
$\Gamma$ to obtain another set with the same properties.
For this, take $\mu\in\Lambda$ such that
$\pi_\mu(\sum_{\gamma\in\Gamma}b_\gamma)\neq 0$, and maximal with respect to this
property. Note that since $\pi_\beta(b_\gamma)\neq 0$ implies that
$\gamma\geq\beta$, we have $\mu\leq\gamma$ for some element
$\gamma\in\Gamma$. By Lemma \ref{annihilators}(1), there is an element
$b_\mu\in B(\{\mu\})$ such that
$\pi_\mu(\sum_{\gamma\in\Gamma}b_\gamma)=\pi_\mu(-b_\mu)$.
Let $\Gamma'=\Gamma\cup\{\mu\}$. If $\beta\geq\gamma'$ for some
$\gamma'\in\Gamma$, we show that
$\pi_\beta(\sum_{\gamma'\in\Gamma'}b_{\gamma'})= 0$.

There are two cases. If $\gamma'\in\Gamma$, then
$\pi_\beta(\sum_{\gamma\in\Gamma}b_\gamma)= 0$. If
$\pi_\beta(b_\mu)\neq 0$, then $\beta\leq \mu$ and so
$\gamma'\leq \beta\leq\mu$, whence by hypothesis
$\pi_\mu(\sum_{\gamma\in\Gamma}b_\gamma)= 0$, a contradiction.
Hence $\pi_\beta(b_\mu)=0$, which proves the assertion in this case.

The remaining possibility is that $\gamma'=\mu$. In this case,
since $\pi_\mu(\sum_{\gamma'\in\Gamma'}b_{\gamma'})= 0$ by
construction, we may suppose $\beta>\mu$. But then by the maximal
nature of $\mu$,
$\pi_\beta(\sum_{\gamma\in\Gamma}b_\gamma)= 0$. Moreover
since $\beta>\mu$, $\pi_\beta(b_\mu)=0$. Hence
$\Gamma'$ and $\sum_{\gamma'\in\Gamma'}b_{\gamma'}$
have the same property as $\Gamma$ and $\sum_{\gamma\in\Gamma}b_\gamma$.
Note that $\Gamma'$ is obtained from $\Gamma$ by adding an element $\mu$
such that $\mu\leq\gamma$ for some $\gamma\in\Gamma$.

Now to prove the assertion (\ref{recursive}), start with 
$\Gamma=\{\lambda\}$ and $b_\lambda=b$. The argument above 
shows that we may repeatedly add elements $\mu<\lambda$ to $\Gamma$,
with corresponding $b_\mu\in B(\{\mu\})$, eventually coming to
a set $\Gamma_{max}$ such that $\sum_{\mu\in\Gamma_{max}}b_\mu$
acts trivially on each $L(\beta)$ ($\beta\in\Lambda$).

This completes the proof of (\ref{recursive}), and hence of
(3).
\end{proof}

The arguments used in the proof of the above Lemma may be applied
to yield the
following result, in which we use the standard notation of \cite{GL96}
for cellular theory.

\begin{thm}\label{rad}
Let $B=(\Lambda,M,C,*)$ be a cellular algebra over a field $\bF$,
and assume that $B$ is quasi-hereditary, i.e. that the invariant form
$\phi_\lambda$ on each cell module is non-zero.
For $\lambda\in\Lambda$, denote by $W(\lambda)$ and $R(\lambda)$
respectively the corresponding cell module and its radical.
\begin{enumerate}
\item Let $\lambda\in\Lambda$ and take any elements
$x\in W(\lambda)$, $z\in R(\lambda)$. Then there exist elements
$r(x,z)\in C^\lambda(x\otimes z)+B(< \lambda)$ and
$r(z,x)\in C^\lambda(z\otimes x)+B(< \lambda)$, both in $\CR$,
the radical of $B$.
\item Let $X$ be a subset of $B$ such that for
all $\lambda\in\Lambda$, $x\in W(\lambda)$ and $z\in R(\lambda)$,
$X$ contains elements $r(x,z)$ and $r(z,x)$ as in (1).
Then the linear subspace of $B$ spanned by
$X$ contains $\CR$.
\item Suppose $J$ is a two-sided ideal of $B$ such that
$J^*=J$. Let $\Lambda^0:=\{\lambda\in\Lambda\mid JL(\lambda)=0\}$.
Then $J\supseteq \CR$ if and only if,
for all $\lambda\in\Lambda^0$,
$R(\lambda)\subseteq JW(\lambda)$.
\end{enumerate}
\end{thm}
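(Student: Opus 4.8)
The plan is to extract (1) and (2) from the recursive construction carried out in the proof of \lemref{rad1}(3) and from the cell filtration of $\CR$ exhibited there, and then to deduce (3) by a dimension count that uses the hypothesis $J^*=J$ to control the cell filtration of $J$. Statement (1) is essentially a reformulation of what the proof of \lemref{rad1}(3) produces: since $z\in R(\lambda)$, \lemref{rad1}(2) gives $\pi_\lambda\bigl(C^\lambda(x\otimes z)\bigr)=0$, and $\pi_\beta\bigl(C^\lambda(x\otimes z)\bigr)=0$ for $\beta\ge\lambda$ because $C^\lambda(x\otimes z)\in B(\{\lambda\})$ (recall from that proof that $\pi_\beta$ annihilates $B(\{\nu\})$ unless $\beta\le\nu$). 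Hence $\bigl(\{\lambda\},\,C^\lambda(x\otimes z)\bigr)$ is an admissible starting datum for that recursion, which produces elements $b_\mu\in B(\{\mu\})$ with $\mu<\lambda$ such that $C^\lambda(x\otimes z)+\sum_{\mu<\lambda}b_\mu\in\CR$; since $\sum_{\mu<\lambda}b_\mu\in B(<\lambda)$, this element is the required $r(x,z)$, and the input $C^\lambda(z\otimes x)$ (legitimate again by \lemref{rad1}(2)) produces $r(z,x)$.

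For (2) I would prove, by induction on $|\Lambda_0|$, that $\CR\cap B(\le\Lambda_0)\subseteq\operatorname{span}(X)$ for every order ideal $\Lambda_0\subseteq\Lambda$; the case $\Lambda_0=\Lambda$ is the assertion. In the inductive step choose $\lambda$ maximal in $\Lambda_0$ and put $\Lambda_0'=\Lambda_0\setminus\{\lambda\}$. Given $r\in\CR\cap B(\le\Lambda_0)$, decompose $r=r_\lambda+r'$ with $r_\lambda\in B(\{\lambda\})$ and $r'\in B(\le\Lambda_0')$ using \eqnref{add}. Maximality of $\lambda$ forces $\pi_\lambda(r')=0$, hence $\pi_\lambda(r_\lambda)=\pi_\lambda(r)=0$, and arguing as in the proof of \lemref{rad1}(3) (via \lemref{rad1}(1),(2)) one may write $r_\lambda=C^\lambda(\zeta)$ with $\zeta\in W(\lambda)\otimes R(\lambda)^*+R(\lambda)\otimes W(\lambda)^*$. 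Writing $\zeta$ as a sum of simple tensors $x\otimes z$ and $z\otimes x$ ($x\in W(\lambda)$, $z\in R(\lambda)$) and letting $r^\sharp$ be the corresponding sum of elements $r(x,z),r(z,x)\in X$ supplied by (1), we get $r^\sharp\in\CR$ and $r^\sharp\in r_\lambda+B(<\lambda)\subseteq r_\lambda+B(\le\Lambda_0')$; therefore $r-r^\sharp\in\CR\cap B(\le\Lambda_0')$, so the inductive hypothesis applies and $r=(r-r^\sharp)+r^\sharp\in\operatorname{span}(X)$.

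For the necessity half of (3) I would first record that $\CR W(\lambda)=R(\lambda)$ for all $\lambda$. The inclusion $\CR W(\lambda)\subseteq R(\lambda)$ holds because $\CR$ annihilates the simple module $L(\lambda)=W(\lambda)/R(\lambda)$. For the reverse inclusion, fix $z\in R(\lambda)$ and $x\in W(\lambda)$ with $\phi_\lambda(x,-)\ne 0$ (possible since $\phi_\lambda\ne 0$); by (1) there is $r(z,x)\in\CR$ lying in $C^\lambda(z\otimes x)+B(<\lambda)$, and since $B(<\lambda)$ annihilates $W(\lambda)$ while $C^\lambda(z\otimes x)$ acts on $W(\lambda)$ as $w\mapsto\phi_\lambda(x,w)z$ by \lemref{rad1}(1), we get $r(z,x)w_0=\phi_\lambda(x,w_0)z$ for a suitable $w_0$, so $z\in\CR W(\lambda)$. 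Consequently, if $J\supseteq\CR$ and $\lambda\in\Lambda^0$ then $R(\lambda)=\CR W(\lambda)\subseteq JW(\lambda)$, which is the necessity of the stated condition.

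The converse is the step I expect to be the main obstacle; the plan is a dimension count on $B/J$. Assume $R(\lambda)\subseteq JW(\lambda)$ for all $\lambda\in\Lambda^0$. First identify $JW(\lambda)$: for $\lambda\in\Lambda^0$ it is contained in $R(\lambda)$ (since $JL(\lambda)=0$), hence equals $R(\lambda)$ by hypothesis; for $\lambda\notin\Lambda^0$, $\pi_\lambda(J)$ is a nonzero two-sided ideal of the simple algebra $\End L(\lambda)$, hence all of it, so $JW(\lambda)+R(\lambda)=W(\lambda)$, and since $R(\lambda)=\CR W(\lambda)$, Nakayama's lemma applied to $W(\lambda)=JW(\lambda)+\CR W(\lambda)$ gives $JW(\lambda)=W(\lambda)$. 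Now fix a flag $\emptyset=\Lambda_0\subset\Lambda_1\subset\cdots\subset\Lambda_N=\Lambda$ of order ideals with $\Lambda_i\setminus\Lambda_{i-1}=\{\lambda_i\}$; telescoping gives $\dim J=\sum_i\dim\bigl((J\cap B(\le\Lambda_i))/(J\cap B(\le\Lambda_{i-1}))\bigr)$, and the $i$-th summand, viewed inside $B(\le\Lambda_i)/B(\le\Lambda_{i-1})\cong W(\lambda_i)\otimes W(\lambda_i)^*$, contains the image of $C^{\lambda_i}\bigl(W(\lambda_i)\otimes(JW(\lambda_i))^*+(JW(\lambda_i))\otimes W(\lambda_i)^*\bigr)$ --- here one uses that $C^\lambda$ is a $(B,B)$-bimodule map modulo $B(<\lambda)$ and that $J^*=J$. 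Since $\dim(W\otimes Y^*+Y\otimes W^*)=(\dim W)^2-(\dim W-\dim Y)^2$ for any subspace $Y\subseteq W$, this yields $\dim J\ge\dim B-\sum_{\lambda\in\Lambda^0}(\dim L(\lambda))^2$, i.e.\ $\dim(B/J)\le\sum_{\lambda\in\Lambda^0}(\dim L(\lambda))^2$. But the simple $B/J$-modules are exactly the $L(\lambda)$ with $\lambda\in\Lambda^0$, so the semisimple quotient of $B/J$ has dimension $\sum_{\lambda\in\Lambda^0}(\dim L(\lambda))^2$; hence $\operatorname{rad}(B/J)=0$, so $B/J$ is semisimple and $\CR\subseteq J$. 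The crux is the lower bound for $\dim J$: pinning down $JW(\lambda)$ exactly (where the hypothesis is used) and giving a sufficiently precise description, via $J^*=J$, of the cell filtration of $J$.
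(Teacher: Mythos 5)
Your proposal is correct, and it takes a genuinely different route through parts (2) and (3) than the paper does — in a way that is essentially \emph{dual} to the paper's choices of where to use a dimension count versus a structural recursion.

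For (2) the paper counts dimensions: $\dim\CR=\sum_\lambda r_\lambda(w_\lambda+l_\lambda)$, and the span of the elements $r(x,z),r(z,x)$ has dimension at least $\sum_\lambda(2w_\lambda r_\lambda-r_\lambda^2)$, which is the same number; the lower bound is justified only by the terse remark that it ``is evident from an easy induction in the poset $\Lambda$''. You instead prove the inclusion $\CR\cap B(\le\Lambda_0)\subseteq\operatorname{span}(X)$ directly, by induction on the size of the order ideal $\Lambda_0$, peeling off a maximal $\lambda$, using the observation that the induced map $B(\{\lambda\})\cong W(\lambda)\otimes W(\lambda)^*\lr\End_\bF(L(\lambda))$ has kernel exactly $W(\lambda)\otimes R(\lambda)^*+R(\lambda)\otimes W(\lambda)^*$, and then subtracting an $r^\sharp\in\operatorname{span}(X)\cap\CR$ to drop into $B(\le\Lambda_0')$. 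This is cleaner and makes explicit the induction the paper only gestures at; the small price is that you must justify that the $\zeta$ you get can be written as a sum of simple tensors of the required shape.

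For (3) the split is reversed. The paper's sufficiency argument is structural: given $JW(\lambda)\supseteq R(\lambda)$, it manufactures, for each $(z,x)$ with $z\in R(\lambda)$, an element $a_1=C^\lambda(z\otimes x)+b\in J$, then re-runs the recursion of \lemref{rad1}(3) inside $J$ (using $JW(\mu)=W(\mu)$ for $\mu$ with $JL(\mu)\neq 0$) to push $a_1$ into $J\cap\CR$, and then invokes (2). You instead carry out a dimension count on $J$ itself: choosing a full flag of order ideals $\emptyset=\Lambda_0\subset\cdots\subset\Lambda_N=\Lambda$, you bound the successive cell layers of $J$ below by $W(\lambda_i)\otimes JW(\lambda_i)+JW(\lambda_i)\otimes W(\lambda_i)$ (the right-module half is where $J^*=J$ is used), pin down $JW(\lambda)$ exactly ($=R(\lambda)$ for $\lambda\in\Lambda^0$ by hypothesis, $=W(\lambda)$ for $\lambda\notin\Lambda^0$ by Nakayama using $\CR W(\lambda)=R(\lambda)$), and deduce $\dim(B/J)\leq\sum_{\lambda\in\Lambda^0}l_\lambda^2$, which forces $\operatorname{rad}(B/J)=0$ and hence $\CR\subseteq J$. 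This is a self-contained argument that does not route through (2) at all, whereas the paper's proof of (3) depends on (2). Both are valid; yours localises the combinatorial input in the flag decomposition and isolates the use of $J^*=J$ crisply, while the paper's approach reuses its recursion machinery and avoids the Nakayama step. The necessity half and part (1) match the paper's treatment.
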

\begin{proof} The argument given in the proof of Lemma \ref{rad1}(3)
proves the statement (1).

For each $\lambda\in\Lambda$, let $w_\lambda=\dim\;W(\lambda)$,
$r_\lambda=\dim R(\lambda)$, and
$l_\lambda=\dim L(\lambda)=\dim\left(W(\lambda)/R(\lambda)\right)
=w_\lambda-r_\lambda$.
Then
$$
\begin{aligned}
\dim_\bF(\CR)&=\dim_\bF(B)-\sum_{\lambda\in\Lambda}l_\lambda^2\\
&=\sum_{\lambda\in\Lambda}w_\lambda^2-\sum_{\lambda\in\Lambda}l_\lambda^2\\
&=\sum_{\lambda\in\Lambda}r_\lambda(w_\lambda+l_\lambda).\\
\end{aligned}
$$
But it is evident from an easy induction 
 in the poset $\Lambda$ that
the dimension of the space spanned by the elements $r(x,z)$
and $r(z,x)$ is at least equal to
$$
\begin{aligned}
&\sum_{\lambda\in\Lambda}(2w_\lambda r_\lambda-r_\lambda^2)\\
=&\sum_{\lambda\in\Lambda}r_\lambda(2w_\lambda -r_\lambda)\\
=&\sum_{\lambda\in\Lambda}r_\lambda(w_\lambda +l_\lambda).\\
\end{aligned}
$$
Comparing with $\dim(\CR)$, we obtain the statement (2).

We now turn to (3). We begin by showing
\be\label{whole}
J\supseteq\CR\iff R(\lambda)\subseteq JW(\lambda) 
\text{ for all }\lambda\in\Lambda.
\ee

First assume $J\supseteq\CR$ and suppose $z\in R(\lambda)$;
then take $x,y\in W(\lambda)$, such that $\phi_\lambda(x,y)\neq 0$.
Since $\CR$, and therefore $J$, contains an element $r(z,x)$ of
the form above, we have $JW(\lambda)\ni r(z,x)y=\phi_\lambda(x,y)z$.
Hence $R(\lambda)\subseteq JW(\lambda)$ for each $\lambda\in\Lambda$.

Conversely, suppose $JW(\lambda)\supseteq R(\lambda)$
for each $\lambda\in\Lambda$. Since $J^*=J$, to show that
$J\supseteq \CR$, it will suffice to show that for any $\lambda\in\Lambda$,
and $x\in W(\lambda), z\in R(\lambda)$, there is an element
$r(z,x)\in J$, of the form above. Now by hypothesis, $z\in JW(\lambda)$;
hence $C^\lambda(z\otimes x)=C^\lambda(ay\otimes x)\in
aC^\lambda(y\otimes x)+B(<\lambda)$, for some $a\in J$ and
$y\in W(\lambda)$. Hence there is an element
$a_1=C^\lambda(z\otimes x)+b\in J$ where
$b\in B(<\lambda)$. If $a_1\not\in\CR$,
then there is an element $\lambda'<\lambda$
such that $a_1L(\lambda')\neq 0$, since
$a_1L(\mu)=0$ for all $\mu\not<\lambda$.

By the cyclic nature of
cell modules, if $JL(\mu)\neq 0$, then $JW(\mu)=W(\mu)$.
Thus for any two elements $p,q\in W(\mu)$, since $p\in JW(\mu)$, 
the argument above shows that
$C^\mu(p\otimes q)+b'\in J$ for some $b'\in B(<\mu)$.
It follows that
the argument in the proof of Lemma \ref{rad1} may be applied to show that
$a_1$ may be recursively modified by elements of $J$, to yield
an element $a_0=r(z,x)\in J\cap \CR$ as required.

The statement (\ref{whole}) now follows from (2). To deduce (3),
observe that if $\lambda\in\Lambda\setminus \Lambda^0$, then since
$JL(\lambda)\neq 0$, there are elements $a\in J$ and $x\in W(\lambda)$
such that $ax\not\in R(\lambda)$. But then by Lemma \ref{rad1}(1),
$W(\lambda)=B\cdot ax\subseteq J\cdot x\subseteq JW(\lambda)$, whence
$JW(\lambda)\supseteq R(\lambda)$ always holds {\it a fortiori}
for $\lambda\in\Lambda\setminus\Lambda^0$. In view of (\ref{whole})
this completes the proof of (3).
\end{proof}

\begin{cor}\label{corirred}
Let notation be as in Theorem \ref{rad}. Assume that for all $\lambda\in\Lambda$
such that $JL(\lambda)=0$, $R(\lambda)$ is either zero or irreducible. Assume further
that for any $\lambda\in\Lambda$ such that $JL(\lambda)=0$ and $R(\lambda)\neq 0$,
$JW(\lambda)\neq 0$. Then $J$  contains the radical $\CR$ of $B$.
\end{cor}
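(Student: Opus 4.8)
The plan is to derive Corollary~\ref{corirred} directly from Theorem~\ref{rad}(3). That result says that a $*$-stable two-sided ideal $J$ contains $\CR$ if and only if $R(\lambda)\subseteq JW(\lambda)$ for every $\lambda\in\Lambda^0$, where $\Lambda^0=\{\lambda\mid JL(\lambda)=0\}$. So the whole task reduces to verifying the inclusion $R(\lambda)\subseteq JW(\lambda)$ for each $\lambda\in\Lambda^0$, using the two hypotheses of the corollary. (One should first note, or silently assume as in the statement of Theorem~\ref{rad}, that $J$ is $*$-stable; if the corollary is meant to include that hypothesis it is inherited from ``notation as in Theorem~\ref{rad}''.)

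First I would dispose of the trivial case: if $\lambda\in\Lambda^0$ and $R(\lambda)=0$, then $R(\lambda)\subseteq JW(\lambda)$ holds vacuously. So fix $\lambda\in\Lambda^0$ with $R(\lambda)\neq 0$. By the first hypothesis, $R(\lambda)$ is then irreducible as a $B$-module. Now $JW(\lambda)$ is a $B$-submodule of $W(\lambda)$: indeed $J$ is a two-sided ideal, so $B\cdot(JW(\lambda)) = (BJ)W(\lambda)\subseteq JW(\lambda)$. Moreover, since $\lambda\in\Lambda^0$ we have $JL(\lambda)=0$, i.e. $JW(\lambda)\subseteq R(\lambda)$. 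Thus $JW(\lambda)$ is a $B$-submodule of the irreducible module $R(\lambda)$, hence equals either $0$ or all of $R(\lambda)$. The second hypothesis rules out $JW(\lambda)=0$ (for exactly these $\lambda$, i.e. those in $\Lambda^0$ with $R(\lambda)\neq0$). Therefore $JW(\lambda)=R(\lambda)$, and in particular $R(\lambda)\subseteq JW(\lambda)$.

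Having verified the criterion in Theorem~\ref{rad}(3) for all $\lambda\in\Lambda^0$, I conclude $J\supseteq\CR$, which is the assertion of the corollary.

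I do not anticipate a genuine obstacle here: the corollary is essentially a packaging of Theorem~\ref{rad}(3) together with the elementary observation that a nonzero submodule of an irreducible module is the whole module. The only point requiring a little care is the chain of inclusions $JW(\lambda)\subseteq R(\lambda)$, which uses precisely that $\lambda\in\Lambda^0$; and the fact that $JW(\lambda)$ is genuinely a $B$-submodule, which uses that $J$ is a left ideal. If anything needs emphasis, it is that the hypotheses are imposed only for $\lambda\in\Lambda^0$ — for $\lambda\notin\Lambda^0$ the inclusion $R(\lambda)\subseteq JW(\lambda)$ is automatic (as noted at the end of the proof of Theorem~\ref{rad}), so no assumption on $R(\lambda)$ or $JW(\lambda)$ is needed there.
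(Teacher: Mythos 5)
Your proof is correct and follows essentially the same route as the paper: both apply Theorem~\ref{rad}(3) and use irreducibility of $R(\lambda)$ to conclude $JW(\lambda)=R(\lambda)$. You spell out a couple of steps the paper leaves implicit (that $JW(\lambda)$ is a $B$-submodule contained in $R(\lambda)$ when $\lambda\in\Lambda^0$), which is a harmless and reasonable elaboration.
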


\begin{proof}
It follows from Theorem \ref{rad} that it suffices to show that for any
$\lambda$ such that $JL(\lambda)=0$, $JW(\lambda)=R(\lambda)$. But by
hypothesis, if $R(\lambda)\neq 0$ for some such $\lambda$, $JW(\lambda)$
is a non-zero submodule of $R(\lambda)$. By irreducibility, it follows that
$JW(\lambda)=R(\lambda)$, whence the result.
\end{proof}

\section{The classical 3-dimensional case}\label{classical3}

\subsection{The setup}\label{setup-class}
Let $V=V(2)$, the classical three-dimensional irreducible
representation of $\fsl_2(\C)$. In this section we shall construct a
quotient of the Brauer algebra $B_r(3)$ which is defined by adding a
single relation to the defining relations of $B_r(3)$, and which
maps surjectively onto $\End_{\fsl_2}(V^{\otimes r})$. For small $r$
we are able to show that our quotient is isomorphic to the
endomorphism algebra. We shall make extensive use of the cellular
structure of $B_r(3)$, as outlined in \cite[\S 4]{GL96}. In analogy
with the Temperley-Lieb case above, where the case $r=3$ (i.e.
$V(1)^{\otimes 3}$) was critical, we start with the case $r=4$, i.e.
$V(2)^{\otimes 4}$.

Recall that given a commutative ring $A$, the Brauer algebra
$B_r(\delta)$ over $A$ may be defined as follows. It has generators
$\{s_1,\dots,s_{r-1};e_1,\dots,e_{r-1}\}$, with relations
$s_i^2=1,\; e_i^2=\delta e_i,\; s_ie_i=e_is_i=e_i$ for all $i$,
$s_is_j=s_js_i,\;s_ie_j=e_js_i,\;e_ie_j=e_je_i$ if $|i-j|\geq 2$,
and $s_is_{i+1}s_i=s_{i+1}s_is_{i+1},\; e_ie_{i\pm 1}e_i=e_i$ and
$s_ie_{i+1}e_i=s_{i+1}e_i,\; e_{i+1}e_is_{i+1}=e_{i+1}s_{i}$ for all
applicable $i$. We shall assume the reader is familiar with the
diagrammatic representation of a basis of $B_r(\delta)$, and how
basis elements are multiplied by concatenation of diagrams. In
particular, the group ring $A\Sym_r$ is the subalgebra of
$B_r(\delta)$ spanned by the diagrams with $r$ ``through strings'',
and the algebra contains elements $w\in\Sym_r$ which are appropriate
products of the $s_i$.

In this section we take $A=\C$ and $\delta=3$. The algebra $B_r(3)$
acts on $V^{\otimes r}$ as follows. We take the same basis
$\{v_i\mid i=0,\pm 1\}$ as in \S \ref{qnotation}. Then $s_i$ acts by
interchanging the $i^{\text{th}}$ and $(i+1)^{\text{st}}$ factors in
the tensor $v_{j_1}\otimes\dots\otimes v_{j_r}$. We define $w_0\in
V\otimes V$ as the specialisation at $q=1$ of the element $w_0$ of
\S \ref{qnotation}, i.e. $w_0=v_{-1,1}+v_{1,-1}-v_{0,0}$. Then the
action of $e_1$ is obtained by putting $q=1$ in Lemma
\ref{e-action}, i.e.
$$
e_1v_{i,j}=\left\{
\begin{array}{l l}
w_0, &\text{if }(i,j)=(1,-1)\text{ or }(-1,1),\\
-w_0, &\text{if }(i,j)=(0,0),\\
0, &\text{if }i+j\neq 0.
\end{array}\right.
$$

The element $e_i$ acts on the $i,i+1$ components similarly. In
addition to the elements $s_i$ and $e_i$, it will be useful to
define the endomorphisms $e_{i,j}:=(1,i)(2,i+1)e_1(1,i)(2,i+1)$,
where we use the usual cycle notation for permutations in
$B_r(\delta)$. The endomorphism $e_{i,j}$ acts on the $i^{\text{th}}$
and $j^{\text{th}}$ components of $V^{\otimes r}$ as $e_1$, and
leaves the other components unchanged.

\subsection{Cellular structure}\label{cell-br} The Brauer algebra
$B_r(\delta)$ was proved in \cite[\S 4]{GL96} to have a cellular
structure. This facilitates discussion of its representation theory.
We begin by reviewing
briefly the cells and cell modules for $B_r(\delta)$. Our notation
here differs slightly from that in {\it loc. cit.}.

Given an integer $r\in\Z_{\ge 0}$, define $\CT(r): =\{t\in \Z\mid
0\leq t\leq r,\;\text{ and } r-t\in 2\Z\}$. For $t\in\Z_{\geq 0}$,
let $\CP(t)$ denote the set of partitions of $t$. Define
$\Lambda(r):=\amalg_{t\in\CT(r)}\CP(t)$. This set is partially
ordered by stipulating that $\lambda> \lambda'$ if
$|\lambda|>|\lambda'|$ or $|\lambda|=|\lambda'|$, and
$\lambda>\lambda'$ in the dominance order on partitions of
$|\lambda|$. For any partition
$\lambda=(\lambda_1\geq\lambda_2\geq\dots\geq \lambda_p)$, denote by
$|\lambda|$ the sum $\sum_i\lambda_i$ of its parts. Given
$\lambda\in\Lambda(r)$, the corresponding set $M(\lambda)$ (cf.
\cite[(1.1) and \S 4]{GL96}) is the set of pairs $(S,\tau)$, where
$S$ is an involution with $|\lambda|$ fixed points in $\Sym_{r}$
and $\tau$ is a standard tableau of shape $\lambda$.

If $(S,\tau)$ and $(S',\tau')$ are two elements of $M(\lambda)$,
the basis element of $B_r(\delta)$ is, in the
notation of \cite[(4.10)]{GL96},
$$
C^\lambda_{(S,\tau),(S',\tau')}:=
\sum_{w\in \Sym_{|\lambda|}}p_{w(\tau,\tau')}(w)[S,S',w],
$$
where $w(\tau,\tau')$ is the element of $\Sym_{|\lambda|}$ corresponding
to $\tau,\tau'$ under the Robinson-Schensted correspondence, and for
$u\in \Sym_{|\lambda|}$,
$c_u=\sum_{w\in\Sym_{|\lambda|}}p_u(w)w$ is the corresponding
Kazhdan-Lusztig basis element of $\Z\Sym_{|\lambda|}$.
The cardinality $|M(\lambda)|$ is easily computed. Let
$k(\lambda)=\frac{r-|\lambda|}{2}$, and let $d_\lambda$ be the
dimension of the representation (Specht module) of the symmetric
group $\Sym_{|\lambda|}$ corresponding to $\lambda$. For any integer
$t\geq 0$ denote by $t!!$ the product of the odd positive integers
$2i+1\leq t$. Then we have, for any $\lambda\in\Lambda(r)$,
\begin{equation}\label{cardmlambda}
|M(\lambda)|=\binom{r}{|\lambda|}(2k(\lambda))!!d_\lambda:=w_\lambda.
\end{equation}

Now assume that the ground ring $A$ is a field. We recall some facts
from cellular theory.
\begin{assertion}\label{repsbr} Maintain the notation above.
\begin{enumerate}
\item For each $\lambda\in\Lambda(r)$, there is a
left $B_r(\delta)$-module $W(\lambda)$, of dimension $w_\lambda$
over $A$.
\item The module $W(\lambda)$ has a bilinear form
$\phi_\lambda:W(\lambda)\times W(\lambda)\to A$, which is invariant
under the $B_r(\delta)$-action.
\item Let $R(\lambda)$ be the radical of the form $\phi_\lambda$.
Then $L(\lambda):=W(\lambda)/R(\lambda)$ is either an irreducible
$B_r(\delta)$-module, or is zero. The non-zero $L(\lambda)$ are
pairwise non-isomorphic, and all irreducible $B_r(\delta)$-modules
arise in this way.
\item All composition factors $L(\mu)$ of $W(\lambda)$
satisfy $\mu\geq \lambda$.
\item $B_r(\delta)$ is semisimple if and only if each
form $\phi_\lambda$ is non-degenerate. Equivalently, the $W_\lambda$
form a complete set of representatives of the isomorphism classes of
simple $B_r(\delta)$-modules.
\end{enumerate}
\end{assertion}

We shall make use of the following facts.

\begin{prop}\label{brss}Take $A=\C$ and assume that $\delta\neq 0$.
\begin{enumerate}
\item The algebra $B_r(\delta)$ is quasi-hereditary; that is,
each form $\phi_\lambda$ in the assertion \ref{repsbr}(2) is non-zero.
\item The algebra $B_r(3)$ is semisimple if and only if
$r\leq 4$.
\end{enumerate}
\end{prop}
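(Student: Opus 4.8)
The plan is to prove the two parts by quite different means. For part (1), quasi-heredity amounts to showing each form $\phi_\lambda$ is non-zero, and this is a general fact about Brauer algebras over a field with $\delta\neq 0$: one exhibits, for each $\lambda\in\Lambda(r)$, a pair of basis elements $(S,\tau),(S',\tau')\in M(\lambda)$ for which $\phi_\lambda$ is non-degenerate on the span, or more simply one uses the known factorisation of $\phi_\lambda$ through the form on the Specht module $S^\lambda$ of $\Sym_{|\lambda|}$ times a non-zero power of $\delta$. Concretely, taking $S=S'$ to be the involution in $\Sym_r$ whose non-fixed points are paired as $(|\lambda|+1,|\lambda|+2),(|\lambda|+3,|\lambda|+4),\dots$, the diagram $e$-part contributes a factor $\delta^{k(\lambda)}$, and the remaining contribution is the Specht-module form for $\Sym_{|\lambda|}$, which is non-zero in characteristic $0$. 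Hence $\phi_\lambda\neq 0$ for all $\lambda$, which is the definition of quasi-hereditary used here.

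For part (2), the forward direction (if $r\leq 4$ then $B_r(3)$ is semisimple) is checked by examining the finitely many cases $r\le 4$ directly: by Assertion~\ref{repsbr}(5) semisimplicity is equivalent to each $\phi_\lambda$ being non-degenerate, so one lists $\Lambda(r)$ and its cell modules $W(\lambda)$ (using (\ref{cardmlambda}) for the dimensions), and verifies the forms are non-degenerate. The cleanest way to package this is via dimension count: by the classical Clebsch--Gordan/Brauer duality (equation~(\ref{d2r}) and \cite[\S3]{LZ}), $\dim\End_{\fsl_2}(V(2)^{\otimes r})=d(2,r)$, whose values for $r=1,\dots,4$ are $1,3,15,91$; one compares these with $\sum_\lambda w_\lambda^2$ (the dimension of $B_r(3)$) — in fact the relevant comparison is that for $r\le 4$ every cell module $W(\lambda)$ of $B_r(3)$ actually occurs as a summand of $V(2)^{\otimes r}$, forcing $L(\lambda)=W(\lambda)$ and hence non-degeneracy of each $\phi_\lambda$. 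Alternatively, one invokes the Brown/Wenzl semisimplicity criterion for $B_r(\delta)$, which states that $B_r(\delta)$ over $\C$ is non-semisimple precisely when $\delta\in\Z$ and $r$ exceeds an explicit bound depending on $\delta$; for $\delta=3$ this bound yields $r\le 4$.

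For the reverse direction (if $r\geq 5$ then $B_r(3)$ is not semisimple), since semisimplicity of $B_r(\delta)$ implies semisimplicity of $B_{r-1}(\delta)$ is false in general but the failure propagates upward — more precisely, $B_{r-1}(\delta)$ embeds in $B_r(\delta)$ as a (non-unital, or via the standard idempotent) subalgebra in a way that lets non-semisimplicity pass from $r$ to $r+1$ — it suffices to treat $r=5$. Here I would produce an explicit cell module $W(\lambda)$ for $B_5(3)$ on which $\phi_\lambda$ is degenerate: the natural candidate is $\lambda$ a partition of $t=1$ or $t=3$ where a ``$\delta=3$'' linear relation among diagram classes appears. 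The most transparent route is the dimension mismatch: compute $\sum_{\lambda\in\Lambda(5)} w_\lambda^2=\dim B_5(3)$ and compare with $\sum_\lambda l_\lambda^2$ where $l_\lambda=\dim L(\lambda)$; showing these differ (equivalently that some $R(\lambda)\ne 0$) establishes non-semisimplicity. I expect the main obstacle to be this last point: pinning down for which $\lambda$ the radical $R(\lambda)$ becomes non-zero at $r=5$ requires either an explicit Gram-matrix computation for one well-chosen cell module or an appeal to the precise non-semisimplicity locus for Brauer algebras, and then an induction argument to propagate non-semisimplicity to all $r\ge 5$. The cleanest write-up will likely cite the known semisimplicity criterion for $B_r(\delta)$ and reserve the hands-on computation of $d(2,r)$ versus $\dim B_r(3)$ for the small cases $r\le 4$.
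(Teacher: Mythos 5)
The paper disposes of the proposition in one sentence: part (1) is \cite[Corollary (4.14)]{GL96} and part (2) is \cite[Theorem 2.3]{RS}. Your sketch of (1) — factoring $\phi_\lambda$ through the Specht-module form and a power of $\delta^{k(\lambda)}$ — is essentially the content of the cited GL96 result and is sound.

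The forward direction of (2) in your proposal contains a genuine error. The claim that for $r\le 4$ ``every cell module $W(\lambda)$ of $B_r(3)$ actually occurs as a summand of $V(2)^{\otimes r}$'' is false: already at $r=4$ one has $\dim B_4(3)=7!!=105$ while $\dim\End_{\fsl_2}(V(2)^{\otimes 4})=d(2,4)=91$, so $\eta$ has a $14$-dimensional kernel, and the simple modules $L(2^2)$, $L(2,1^2)$, $L(1^4)$ are annihilated by $\eta$ and do not appear in tensor space (this is exactly the content of Theorem \ref{case4} and Theorem \ref{lambda0}). Hence your comparison of $d(2,r)$ with $\sum_\lambda w_\lambda^2$ cannot yield $L(\lambda)=W(\lambda)$ for those $\lambda$, and more generally cannot detect semisimplicity, since $\Ker\eta$ is nontrivial for all $r\ge 4$ regardless of whether $B_r(3)$ is semisimple. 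To establish semisimplicity at $r=4$ one must actually check that each $\phi_\lambda$ is nondegenerate (or appeal, as the paper does, to the Rui--Si/Wenzl semisimplicity criterion, which you mention only as an alternative). Your reverse direction ($r\ge 5$) similarly falls back to either that criterion or an explicit Gram-matrix computation at $r=5$ plus an upward-propagation argument; the paper sidesteps both by citing \cite[Theorem 2.3]{RS} outright and only later, in Theorem \ref{r5}, carries out the Gram-matrix computation at $r=5$ for other purposes. So your fallback strategy is fine, but the ``first-principles'' dimension argument you lead with for $r\le 4$ does not go through.
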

\begin{proof}
The statement (1) is immediate from \cite[Corollary (4.14)]{GL96},
while (2) follows from \cite[Theorem 2.3]{RS}.
\end{proof}

\subsection{The case $r=4$}\label{4} It is clear from dimension
considerations that when $r\leq 3$, the surjection $\eta: B_r(3)\lr
\End_{\fsl_2}V(2)^{\otimes r}$ (and its quantum analogue) is an
isomorphism. The case $r=4$ is therefore critical. In this
subsection we shall treat the classical case when $r=4$.

In terms of \S \ref{setup-class}, we now take $r=4$ and $\delta=3$.
Our purpose is to identify the kernel of the surjection
$\eta:B_4(3)\lr \End_{\fsl_2(\C)}V^{\otimes 4}$. Define the element
$\Phi\in B_4(3)$ by
\begin{equation}\label{defPhi}
\begin{aligned}
\Phi&=Fe_2F-F-\frac{1}{4}Fe_2e_{1,4}F, \text { where }\\
F&=(1-s_1)(1-s_3),\\
\end{aligned}
\end{equation}
where notation is as in \S \ref{setup-class}.

The next statement summarises some of the properties of $\Phi$.

\begin{prop}\label{propsPhi}
Let $F,\Phi\in B_4(3)$ be the elements defined in (\ref{defPhi}).
Then
\begin{enumerate}
\item $e_i\Phi=0$ for $i=1,2,3$.
\item $\Phi^2=-4\Phi$.
\item $\Phi$ acts as $0$ on $V^{\otimes 4}$. That is, $\Phi\in\Ker(\eta)$.
\end{enumerate}
\end{prop}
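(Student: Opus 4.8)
The three assertions concern the element $\Phi=Fe_2F-F-\tfrac14 Fe_2e_{1,4}F$ with $F=(1-s_1)(1-s_3)$, and the plan is to prove them in the order (1), (2), (3), using (1) as the main computational engine and reducing (2) and (3) to it.

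For part (1), I would first record the elementary commutation facts: $F$ involves only $s_1,s_3$, so it commutes with $e_2$ only up to correction terms, but crucially $s_1F=Fs_1^{-1}$-type identities give $e_1F=0$ and $e_3F=0$ directly, since $e_1(1-s_1)=e_1-e_1s_1=e_1-e_1=0$ (using $e_is_i=e_i$) and similarly $e_3(1-s_3)=0$; because $e_1$ commutes with $(1-s_3)$ and $e_3$ commutes with $(1-s_1)$, this already yields $e_1F=e_3F=0$. The remaining case $e_2\Phi=0$ is the substantive one: I would expand $e_2\Phi=e_2Fe_2F-e_2F-\tfrac14 e_2Fe_2e_{1,4}F$ and compute $e_2Fe_2$ explicitly using the Brauer relations $e_2s_1e_2$, $e_2s_3e_2$, $e_2s_1s_3e_2$ (the first two simplify via $e_ie_{i\pm1}e_i=e_i$ after writing $s_ie_j$ in terms of $e_ie_j$ using $s_ie_{i+1}e_i=s_{i+1}e_i$ etc.), to get $e_2Fe_2$ as a scalar multiple of $e_2$ plus a term involving $e_2e_{1,4}$ or similar; the design of the coefficient $-\tfrac14$ is precisely what forces the cancellation. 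This is where the bulk of the diagram algebra lives, and it is the step I expect to be the main obstacle — keeping track of the products $e_2e_{1,4}$, $e_{1,4}e_2$ and the various $s_i e_{1,4}$ in $B_4(3)$ with $\delta=3$ is error-prone, and one must genuinely use $\delta=3$ (or at least $\delta\neq$ certain values) at some point.

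For part (2), once (1) is known I would write $\Phi^2=\Phi\cdot(Fe_2F-F-\tfrac14 Fe_2e_{1,4}F)$ and observe that every term of $\Phi$ has $F$ as a right factor, while $\Phi F$ can be computed from $FF$: since $F=(1-s_1)(1-s_3)$ and $(1-s_i)^2=2(1-s_i)$ (because $s_i^2=1$), we get $F^2=4F$, hence $\Phi F=(Fe_2F-F-\tfrac14Fe_2e_{1,4}F)F = Fe_2F\cdot F - F^2 - \tfrac14 Fe_2e_{1,4}F^2$. Wait — more directly, $\Phi^2 = (Fe_2F-F-\tfrac14 Fe_2e_{1,4}F)\Phi$; using $e_2\Phi=0$ from (1) kills the first and third terms (each ends in $e_2F\cdot(\text{stuff})\Phi$? no), so I would instead factor on the left: $\Phi = F\cdot(e_2F - 1 - \tfrac14 e_2e_{1,4}F)$, and compute $\Phi^2 = F(e_2F-1-\tfrac14 e_2e_{1,4}F)\cdot F(e_2F-1-\tfrac14e_2e_{1,4}F)$; the inner $(\cdots)F = e_2F^2 - F - \tfrac14 e_2e_{1,4}F^2 = 4e_2F - F - e_2e_{1,4}F$, and then one multiplies out against the outer factors, again invoking the $e_2Fe_2$ computation from part (1) and $e_1F=e_3F=0$ to collapse everything. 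Organizing this so the algebra from (1) is reused verbatim is the efficient route; the scalar $-4$ drops out of $F^2=4F$ together with the part (1) cancellations.

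For part (3), by \thmref{surjbmwend} and \lemref{bmwspec}(2) the relevant fact is that $\eta:B_4(3)\to\End_{\fsl_2}V^{\otimes 4}$ sends $s_i$ and $e_i$ to the operators described in \S\ref{setup-class}, so it suffices to show $\Phi$ annihilates the basis vectors $v_{i,j,k,l}$. Here I would use that $F=(1-s_1)(1-s_3)$ projects onto the subspace of $V^{\otimes 4}$ that is antisymmetric in positions $1,2$ and in positions $3,4$ — concretely $F(V\otimes V\otimes V\otimes V)\subseteq (\wedge^2 V)\otimes(\wedge^2 V)$, and $\wedge^2 V(2)\cong V(2)$ as $\fsl_2$-modules (three-dimensional each), so $\im(\eta(F))\cong V(2)\otimes V(2)\cong L(0)\oplus L(2)\oplus L(4)$. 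On this six-plus-dimensional image, $\eta(e_2)$ acts (via \lemref{e-action} with $q=1$, restricted appropriately) and I would compute the composite $\eta(Fe_2F)$ and $\eta(Fe_2e_{1,4}F)$ on a spanning set of $\im\eta(F)$; the identity $\eta(\Phi)=0$ then becomes a finite linear-algebra check on roughly a $3\times3$ worth of vectors. Alternatively, and perhaps more cleanly, since \corref{e2classical} identifies $e_2:L(2)\otimes L(2)\to V\otimes L(0)\otimes V$ as an isomorphism, one can pin down exactly how $e_2$ acts on the antisymmetrized space and verify the cancellation of the $-1$ against the $e_2F\cdots e_2F$ term and the $-\tfrac14 e_2e_{1,4}$ correction term numerically. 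Part (3) is routine once the right subspace is identified; the only care needed is getting the $q=1$ specialization of the bases in \lemref{bases} correct.
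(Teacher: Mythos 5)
Your plans for parts (1) and (2) track the paper's argument closely: the key identity the paper establishes is $e_2Fe_2 = 3e_2 - e_2 - e_2 + e_2s_1s_3e_2 = e_2 + e_2e_{1,4}$, after which $e_2\Phi = 0$ drops out using $e_{1,4}^2 = 3e_{1,4}$, and this confirms your prediction that $e_2Fe_2$ is ``$e_2$ plus a term in $e_2e_{1,4}$'' and that $\delta=3$ is used genuinely. For part (2) the paper is more economical than your multiplied-out version: since every summand of $\Phi$ has $F$ as a left factor, $F\Phi = 4\Phi$ (from $F^2 = 4F$); then $Fe_2F\Phi = 4Fe_2\Phi = 0$ by (1), and $Fe_2e_{1,4}F\Phi = 4Fe_{1,4}e_2\Phi = 0$ since $e_2$ and $e_{1,4}$ act on disjoint strands and commute, leaving $\Phi^2 = -F\Phi = -4\Phi$ with no further expansion. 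For part (3) you are circling the right ingredients but stop short of the punchline, proposing a ``finite linear-algebra check'' or a ``numerical'' verification of the cancellation; the paper does no computation at all. The slick point is that $e_2\Phi = 0$ has \emph{already} been proved as an identity in $B_4(3)$ in part (1), so $\eta(e_2)\eta(\Phi) = 0$ for free; combined with $\Phi(V^{\otimes 4}) \subseteq F(V^{\otimes 4}) = L(2)\otimes L(2)$ and the injectivity of $e_2$ on $L(2)\otimes L(2)$ (Corollary~\ref{e2classical}), this forces $\Phi(V^{\otimes 4}) = 0$ outright. You cite the right corollary, but frame it as a tool for a computation you do not actually need to perform. (Also a small slip: $\im\eta(F) \cong L(2)\otimes L(2)$ is $9$-dimensional, not ``six-plus-dimensional.'')
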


\begin{proof}
First note that
\begin{equation}\label{eFe}
e_2Fe_2=e_2+e_2e_{1,4}.
\end{equation}
To see this, observe that $e_2Fe_2=e_2(1-s_1-s_3+s_1s_3)e_2
=e_2^2-e_2s_1e_2-e_2s_3e_2+e_2s_1s_3e_2=3e_2-2e_2+e_2e_{1,4}$.

To prove (1), note that it is trivial that $e_iF=0$ for $i=1,3$, and
hence that $e_i\Phi=0$ for $i=1,3$. But the relation $e_2\Phi=0$ now
follows easily from (\ref{eFe}) and the fact that
$e_{1,4}^2=3e_{1,4}$.

It follows from (1) that $e_2F\Phi=0$, since $F\Phi=4\Phi$ (recall
$F^2=4F$). Hence $\Phi^2=-F\Phi=-4\Phi$ which proves (2).

For (3), note that $F$ maps $V^{\otimes 4}$ onto $L(2)\otimes L(2)$,
and hence that $\Phi(V^{\otimes 4}) \subseteq L(2)\otimes L(2)$. But
by Corollary \ref{e2classical}, $e_2$ acts injectively on
$L(2)\otimes L(2)$, whence it follows from the fact that
$e_2\Phi=0$, just proved, that $\Phi(V^{\otimes 4})=0$.
\end{proof}

\begin{thm}\label{case4}
The kernel of $\eta: B_4(3)\lr \End_{\fsl_2}V(2)^{\otimes 4}$ is
generated by the element $\Phi$ above.
\end{thm}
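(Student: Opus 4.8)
The plan is to show that the two-sided ideal $J=\langle\Phi\rangle$ generated by $\Phi$ is precisely $\Ker(\eta)$. One inclusion is free: by \propref{propsPhi}(3), $\Phi\in\Ker(\eta)$, so $J\subseteq\Ker(\eta)$. For the reverse inclusion I would exploit that $B_4(3)$ is semisimple (\propref{brss}(2)), so $\Ker(\eta)$ is a sum of matrix blocks $\ov B(\lambda)$ over a set $\Lambda\setminus\Lambda^0$, where by \corref{kereta} $\Lambda^0$ indexes those $L(\lambda)$ occurring as summands of $V(2)^{\otimes 4}$. Thus it suffices to (i) identify $\Lambda(4)$ and the cell modules $W(\lambda)$ with their dimensions via \eqnref{cardmlambda}, (ii) determine which $L(\lambda)=W(\lambda)$ survive in $\End_{\fsl_2}V(2)^{\otimes 4}$ — equivalently, compute the multiplicities $m_2^4(i)$ from \secref{gen}, matching $\dim\End=91$ from \eqnref{d2r} against $\sum l_\lambda^2$ — and (iii) show $\Phi$ generates exactly the complementary blocks.

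For step (iii) the key is to pin down which irreducibles $\Phi$ ``kills'' and which it ``sees''. Since $e_i\Phi=0$ for all $i$ by \propref{propsPhi}(1), the ideal $J$ lies in the kernel of the map $B_4(3)\to\C\Sym_4$ of \lemref{bmwspec}(3) (classical analogue), so $J$ acts as zero on every cell module $W(\lambda)$ with $|\lambda|=4$, i.e. on the Specht modules of $\Sym_4$. Hence the blocks $\ov B(\lambda)$ for $|\lambda|=4$ all lie in $\Lambda^0$ as far as $J$ is concerned, consistent with the fact that $V(2)^{\otimes 4}$ contains a copy of every such $L(\lambda)$. So $J$ is supported on blocks with $|\lambda|<4$, i.e. $\lambda$ a partition of $0$ or $2$. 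By \propref{propsPhi}(2), $-\tfrac14\Phi$ is an idempotent, so $J=B\Phi B$ is generated by an idempotent and is itself (semisimplicity) a sum of blocks; it remains to check $J$ is nonzero in exactly the right ones. I would compute the action of $\Phi$ on the relevant low cell modules $W(\emptyset)$ (the $\delta$-dimensional ``all-arcs'' module, $\dim=3$ for $r=4$) and $W((2)),W((1,1))$ directly, using $\Phi=Fe_2F-F-\tfrac14 Fe_2e_{1,4}F$ and the diagram calculus, to see that $\Phi$ acts nonzero precisely on the block(s) absent from $V(2)^{\otimes 4}$, and zero on the surviving ones.

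Concretely, I expect the dimension count to be the organizing device: $\dim B_4(3)=\sum_\lambda w_\lambda^2$, with the $\lambda$ of size $4$ contributing $\dim\C\Sym_4=24$ worth of blocks ($\sum d_\mu^2=24$), the $\lambda$ of size $2$ contributing the blocks for $L((2)),L((1,1))$, and $\lambda=\emptyset$ contributing one more block. Matching this against $\dim\End_{\fsl_2}V(2)^{\otimes 4}=91$ should reveal a single block of $B_4(3)$ that is absent from the endomorphism algebra; its matrix size squared is $\dim B_4(3)-91$. Then I would verify that $\Phi$ acts nonzero on that cell module and zero on all others, and invoke semisimplicity (every ideal is a sum of blocks) to conclude $J$ equals exactly that block, hence $J=\Ker(\eta)$.

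The main obstacle I anticipate is step (iii): carefully computing the action of $\Phi$ on the small cell modules of $B_4(3)$ with the Brauer diagram calculus, in particular tracking the $e_2$ and $e_{1,4}$ contributions and the element $F=(1-s_1)(1-s_3)$, and being sure that $\Phi$ does not accidentally also vanish on the ``missing'' block (which would make $J$ too small) or act nontrivially on a ``surviving'' block (which would contradict $J\subseteq\Ker(\eta)$, already known, so really the worry is only the former). A clean way around the brute-force computation may be to identify the missing block representation-theoretically — e.g. as the one whose cell module $W(\lambda)$ has $R(\lambda)\neq 0$ upon a suitable specialisation, or via a dimension/branching argument from $V(2)^{\otimes 3}$ to $V(2)^{\otimes 4}$ using \eqnref{cg} — and then check $\Phi$ is nonzero there by a single well-chosen vector, which is far less work than the full matrix.
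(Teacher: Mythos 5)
Your framework (semisimplicity of $B_4(3)$, decomposition of $\Ker(\eta)$ into matrix blocks, $\langle\Phi\rangle\subseteq\Ker(\eta)$ from Proposition \ref{propsPhi}(3), and a dimension count against $\dim\End_{\fsl_2}V(2)^{\otimes 4}=91$) matches the paper's. But your step (iii) contains an error that inverts the conclusion.

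You write that since $e_i\Phi=0$ for all $i$, ``the ideal $J$ lies in the kernel of the map $B_4(3)\to\C\Sym_4$,'' i.e.\ in $\CI=\langle e_1,e_2,e_3\rangle$, and hence that $J$ kills all Specht modules $W(\lambda)$ with $|\lambda|=4$ and is supported on blocks with $|\lambda|<4$. This confuses ``$\Phi$ is annihilated by the $e_i$'' with ``$\Phi$ lies in the ideal generated by the $e_i$.'' In fact $\Phi\notin\CI$: since $e_2,e_{1,4}\in\CI$, one sees directly from (\ref{defPhi}) that $\Phi\equiv -F\pmod{\CI}$, and $F=(1-s_1)(1-s_3)\neq 0$ in $\C\Sym_4$. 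The truth is precisely the opposite of what you assert: $N=\Ker(\eta)$ is supported on the three Specht modules with $|\lambda|=4$ that do \emph{not} occur as $B_4(3)$-constituents of $V(2)^{\otimes 4}$, namely those for $(2^2)$, $(2,1^2)$, $(1^4)$ (dimensions $2,3,1$, giving $4+9+1=14=105-91$), while $\Phi$ acts as zero on every block with $|\lambda|<4$. Had you followed through with your proposed computation of $\Phi$ on $W(\emptyset),W((2)),W((1^2))$, you would have found it vanishes on all of them and reached an impasse.

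The paper's argument exploits the nonzero image of $\Phi$ in $\C\Sym_4$: modulo $\CI$, $\langle\Phi\rangle$ generates the same two-sided ideal as $F$, and $\tfrac14 F$ is an idempotent with $\C\Sym_4\cdot F\cong\Ind_K^{\Sym_4}(\varepsilon)$ for $K=\langle s_1,s_3\rangle$. A short computation shows this induced module is $S^{(2^2)}\oplus S^{(2,1^2)}\oplus S^{(1^4)}$, so $\CI+\CP$ has dimension $\dim\CI+14$. Combined with $\CP\subseteq N$ and $\dim N=14$, a dimension chase forces $\CP=N$. You should replace your $e_i\Phi=0$ inference with this analysis of $\Phi\bmod\CI$; the rest of your skeleton then closes correctly.
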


\begin{proof}
The set $\Lambda(4)$ has $8$ elements, ordered as follows:
$$
(4)>(3,1)>(2^2)>(2,1^2)>(1^4)>(2)>(1^2)>(0).
$$

The dimensions of the corresponding cell modules, which by
the assertion \ref{repsbr}(5) and Proposition \ref{brss}(2) 
are simple in this case, are
given respectively by
$$
1,3,2,3,1,6,6,3.
$$

Now since $B_4(3)$ is semisimple, it is isomorphic to a sum
$\oplus_{j=1}^8M(j)$ of 2-sided ideals, which are isomorphic to
matrix algebras of size given in the list above (thus, e.g., $\dim
M(1)=1$, while $\dim M(7)=36$). Moreover the 2-sided ideal $\CI$ of
$B_4(3)$ which is generated by the $e_i$ is cellular, and is the sum
of the matrix algebras $M(j)$ for $j\geq 6$.

Note that $B_4(3)/\CI\cong \C\Sym_4$. Let $\CP$ be the 2-sided ideal
of $B_4(3)$ generated by $\Phi$. Then $\CP+\CI\ni F$, and
$\frac{1}{4}F$ is an idempotent in $\C\Sym_4$ which generates a left
ideal on which $\Sym_4$ acts as $\Ind_{K}^{\Sym_4}(\varepsilon)$,
where $K$ is the subgroup of $\Sym_4$ generated by $s_1,s_3$ and
$\varepsilon$ denotes the alternating representation. But it is
easily verified that $\Ind_{K}^{\Sym_4}(\varepsilon)$ is isomorphic
to the sum of the irreducible representations of $\Sym_4$ which correspond to
the partitions $(2^2), (2,1^2)$ and $(1^4)$, each one occurring with
multiplicity one. Here we use the standard parametrisation in which
the irreducible complex representations of $\Sym_n$ correspond to 
partitions of $n$, the trivial representation corresponding to the 
partition $(n)$.

It follows that the 2-sided ideal of $\C\Sym_4$
generated by $F$ is the image of $\oplus_{j=3}^5M(j)$ under the
surjection $B_4(3)/\CI\lr \C\Sym_4$. It follows that
$\CI+\CP=\oplus_{j\geq 3}M_j$, whence $\dim(\CI+\CP)=\dim\CI+14$.

But using the dimension formula for $\dim\End_{\fsl_2}V^{\otimes 4}$
in (\ref{d2r}), the kernel $N$ of $\eta$ has dimension $14$ in this
case. Since $\CP\subseteq N$, it follows that
$$
\dim (\CI+\CP)\leq\dim(\CI+N)\leq\dim\CI+\dim\CP\leq \dim\CI+\dim N
=\dim\CI+14,
$$
with equality if and only if $\CI\cap N=0$ and $\CP=N$.

Since we have proved equality, the theorem follows.
\end{proof}

\subsection{The case $r=5$}\label{5} This is the first case where
$B:=B_r(3)$ is not semisimple. We shall analyse this case to show how
our methods yield non-trivial information on the algebras, such as
the dimension of the radical. For this subsection only, we denote
$B_5(3)$ by $B$.

In this case the cells are again totally ordered; we write them as
follows.
\begin{eqnarray}\label{cells}
\begin{aligned}
(5)>(4,1)>(3,2)>(3,1^2)>(2^2,1)>(2,1^3)>(1^5)\\ >(3)>(2,1)
>(1^3)>(1).
\end{aligned}
\end{eqnarray}

If $W(\lambda)$ denotes the cell module corresponding to $\lambda$,
the dimensions of the $W(\lambda)$ above are respectively given by:
$$
1,4,5,6,5,4,1,10,20,10,15.
$$
Recall that  $L(\lambda)$ is the irreducible head of $W(\lambda)$
for $\lambda\in\Lambda(5)$;
write $l_\lambda:=\dim L(\lambda)$. These integers are the dimensions
of the simple $B$-modules.

We define the following 2-sided ideals of $B$. Let $\CR$ be the
radical of $B_5(3)$, $\CI=B(\leq(3))$ the ideal generated by
the $e_i$, $\CP$ the ideal generated by $\Phi$, and $N$ the kernel
of $\eta:B\lr E:=\End_{\fsl_2}(V^{\otimes 5})$.

We shall prove

\begin{thm}\label{r5}
Let $B=B_5(3)$ as above, let $\CR$ be its radical, and maintain the
above notation.
\begin{enumerate}
\item The cell modules of $B$ are all simple except for those
corresponding to the partitions
$(2, 1)$ and $(1^3)$, whose simple heads have dimension $15, 6$
respectively.
\item The composition factors of $W(1^3)$ are $L(1^3)$ and
$L(2,1^3)$.
\item The composition factors of $W(2,1)$ are $L(2,1)$ and
$L(2^2,1)$.
\item The radical of $B$ has dimension $239$.
\item The kernel $N$ of $\eta:B\lr E$ is generated by
$\Phi$ modulo the radical. That is, in the notation above,
$N=\CP+\CR$.
\end{enumerate}
\end{thm}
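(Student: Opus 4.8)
The plan is to follow the pattern of the proof of \thmref{case4}: establish the inclusion $\CP+\CR\subseteq N$ by hand, and then force equality by a dimension count, using as inputs $\dim\CR=239$ from \thmref{r5}(4) and $\dim E=\dim_\C\End_{\fsl_2}V(2)^{\otimes 5}=603$ from \eqnref{d2r}.

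For the inclusion: since $\Phi$ lies in $B_4(3)\subseteq B_5(3)$ it acts on $V^{\otimes 5}=V^{\otimes 4}\otimes V$ as $(\Phi\text{ acting on }V^{\otimes 4})\otimes\id_V$, hence as $0$ by \propref{propsPhi}(3); so $\Phi\in N$ and $\CP\subseteq N$. As $B/N\cong E$ is semisimple ($V^{\otimes 5}$ being a completely reducible $\fsl_2$-module), also $\CR\subseteq N$. Since $\dim B_5(3)$ is the number of Brauer $5$-diagrams, namely $9!!=945$, this gives $\dim(\CP+\CR)\le\dim N=945-603=342$.

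For the reverse estimate I would pass to the semisimple quotient $\overline B=B/\CR\cong\bigoplus_{\lambda\in\Lambda(5)}\End_\C(L(\lambda))$. The image $\overline\CP$ of $\CP$ there is the ideal generated by $\overline\Phi$, hence $\overline\CP=\bigoplus_{\lambda:\,\pi_\lambda(\Phi)\neq 0}\End_\C(L(\lambda))$ and $\dim(\CP+\CR)=\dim\CR+\dim\overline\CP=239+\sum_{\pi_\lambda(\Phi)\neq 0}(\dim L(\lambda))^2$. Thus it suffices to exhibit simple modules $L(\lambda)$ on which $\Phi$ acts non-trivially, the sum of whose squared dimensions is at least $342-239=103$. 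I would use the five partitions $\lambda\in\{(3,2),(3,1^2),(2^2,1),(2,1^3),(1^5)\}$ of $5$: by \thmref{r5}(1) the corresponding cell modules are already irreducible, of dimensions $5,6,5,4,1$, and $25+36+25+16+1=103$. For such $\lambda$ the action of $B_5(3)$ on $W(\lambda)=L(\lambda)$ factors through $B_5(3)/\CI\cong\C\Sym_5$ and identifies $W(\lambda)$ with the Specht module $S^\lambda$ (with $(5)$ corresponding to the trivial module; cf. \secref{4} and \cite{GL96}). Since $e_2$ and $e_{1,4}$ (the latter a $\Sym_5$-conjugate of $e_1$) lie in $\CI$, they annihilate $W(\lambda)$, so $\Phi=Fe_2F-F-\tfrac14 Fe_2e_{1,4}F$ acts on $W(\lambda)$ simply as $-F=-(1-s_1)(1-s_3)$. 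As $\tfrac14F$ is the idempotent of $\C\langle s_1,s_3\rangle\cong\C[\Sym_2\times\Sym_2]$ projecting onto the sign-isotypic subspace, $\Phi$ acts non-trivially on $S^\lambda$ precisely when $S^\lambda$ is a constituent of $\Ind_{\Sym_2\times\Sym_2\times\Sym_1}^{\Sym_5}(\varepsilon\boxtimes\varepsilon\boxtimes\mathbf{1})$; by the Pieri rule this induced module is $S^{(3,2)}\oplus S^{(3,1^2)}\oplus 2S^{(2^2,1)}\oplus 2S^{(2,1^3)}\oplus S^{(1^5)}$, which contains all five of our partitions. Combining the two estimates yields $\dim(\CP+\CR)=342=\dim N$, and with $\CP+\CR\subseteq N$ this gives $\CP+\CR=N$.

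None of this is hard once \thmref{r5}(4) is in hand: the one computation — that $\Phi\equiv -F\pmod{\CI}$ acts non-trivially on exactly those five Specht modules — is routine, the only delicate point being the matching of the partition/transpose convention, which is already fixed by the analogous $\Sym_4$ computation in the proof of \thmref{case4}. (Alternatively one could combine \corref{kereta} with \thmref{rad}(3): determine $\Lambda^0=\{\lambda:\,L(\lambda)\text{ is a constituent of }V^{\otimes 5}\}$ from $|\Lambda^0|=6$ and the vanishing of $\pi_\lambda(\Phi)$ on $\Lambda^0$, obtain $\Lambda(5)\setminus\Lambda^0=\{(3,2),(3,1^2),(2^2,1),(2,1^3),(1^5)\}$, and conclude $\overline\CP=\overline N$; this is the same computation organised around \secref{radical} rather than a dimension count.)
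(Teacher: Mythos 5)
Your argument for part (5) is essentially the same dimension count the paper uses, organised slightly differently: you pass to $\overline B = B/\CR$, identify $\overline\CP$ with the sum of matrix blocks $\End_\C(L(\lambda))$ where $\pi_\lambda(\Phi)\neq 0$, and force equality with $N$ by matching $\dim(\CP+\CR)=\dim\CR+\dim\overline\CP$ against $\dim N = 945-603 = 342$. The identification of the partitions $\lambda$ of $5$ on which $F$ acts non-trivially via $\Ind_{\Sym_2\times\Sym_2\times\Sym_1}^{\Sym_5}(\varepsilon\boxtimes\varepsilon\boxtimes\mathbf 1)$ is correct, agrees with Lemma~\ref{sym}, and the arithmetic $25+36+25+16+1=103=342-239$ checks out. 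Note also that your reliance on part (1) here is only cosmetic: the simplicity of the cell modules $W(\lambda)$ with $|\lambda|=5$ follows directly from $B/\CI\cong\C\Sym_5$ being semisimple, independently of the theorem.

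The genuine gap is that you have only proved part (5), and have done so by \emph{assuming} part (4) ($\dim\CR = 239$) as an input. That quantity is one of the claims being proved; it is not available for free. In the paper's proof, (4) is derived from (1)--(3): one must first determine, for each of the eleven $\lambda\in\Lambda(5)$, the dimension $l_\lambda=\dim L(\lambda)$, and then compute $\dim\CR = \sum_\lambda(w_\lambda^2 - l_\lambda^2) = (20^2-15^2)+(10^2-6^2) = 239$. Establishing the $l_\lambda$ for the small partitions requires real work that your proposal does not touch: one must decompose $V^{\otimes 5}$ to identify $E\cong M_1\oplus M_4\oplus M_{10}\oplus M_{15}\oplus M_{15}\oplus M_6$; compare the matrix block sizes against the cell-module dimensions $10,20,10,15$ for $(3),(2,1),(1^3),(1)$ to see that exactly $W(2,1)$ and one $10$-dimensional cell module fail to be simple; compute the rank of the Gram matrix of $\phi_{(1^3)}$ to pin down that it is $W(1^3)$ (not $W(3)$) whose head has dimension $6$; and finally analyse which $L(\mu)$ can occur in $R(1^3)$ and $R(2,1)$ using the Kazhdan--Lusztig cell structure and $E(5)=\sum_{w\in\Sym_5}\varepsilon(w)w$ to obtain parts (2) and (3). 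Without that work, parts (1)--(4) of the theorem are unproven, and the dimension count for (5) has no legitimate starting point.

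A secondary remark: your parenthetical ``alternative'' route via Corollary~\ref{kereta} and Theorem~\ref{rad}(3) is closer in spirit to how the paper later handles general $r$ (Theorem~\ref{lambda0} and Corollary~\ref{radcriterion}), but it likewise presupposes knowledge of the $L(\lambda)$ on which $N$ acts trivially and of the radicals $R(\lambda)$, so it does not sidestep the missing parts (1)--(4) either.
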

\begin{proof}
It is easily verified that $E\cong M_1(\C)\oplus M_{4}(\C)\oplus
M_{10}(\C) \oplus M_{15}(\C)\oplus M_{15}(\C)\oplus M_{6}(\C)$,
where $M_n(\C)$ denotes the algebra of matrices of size $n$ over
$\C$. Further, $\eta$ induces a surjection $\overline\eta:\overline
B:=B/\CR\cong\oplus_{\lambda\in\Lambda} M_{l_\lambda}(\C)\lr E$.
We shall determine which of the simple components $M_{l_\lambda}(\C)$
are in the kernel of $\overline\eta$.

Observe that since $B/\CI\cong\C\Sym_5$, the cell modules
$W(\lambda)$ ($|\lambda|=5$) are all irreducible, and clearly
$\CR\subseteq \CI\cap N$.

Now the element $F=(1-s_1)(1-s_3)$ generates the 2-sided ideal of
$\C\Sym_5$ which corresponds to the irreducible represesentations of
$\Sym_5$ which are constituents of $\Ind_K^{\Sym_5}(\varepsilon)$,
where $K$ is the subgroup generated by $s_1$ and $s_3$. An easy
computation shows that these representations are precisely those
which correspond to the partitions $\lambda$ with $|\lambda|=5$
and $\lambda\neq (5),(4,1)$. Let $\Lambda^1=
\{\lambda\in\Lambda\mid \;|\lambda|=5,\lambda\neq (5),(4,1)\}$,
and write $\Lambda^0:=\Lambda\setminus\Lambda^1$.
It follows from the above that $N$ acts non-trivially on the
simple modules $W(\lambda)$
for $\lambda\in\Lambda^1$ (since $\Phi\in N$ does), and hence that
$\Ker(\overline\eta)\supseteq \oplus_{\lambda\in\Lambda^1}
M_{l_\lambda}(\C)$. Using the number and
dimensions of the matrix components of $E$, it follows, by
comparing the
sizes of the matrix algebras on both sides, that $W(1)$
is simple, one of the $10$ dimensional cell modules is simple,
the other has head of dimension $6$, and
$W(2,1)$ has head of dimension $15$.

Now the Gram matrix associated with
the bilinear form $\phi_{(1^3)}$ on $W(1^3)$ is given by
\[
\begin{bmatrix}
 3& 1&-1& 1& 1&-1& 1& 0& 0& 0\\
 1& 3& 1&-1& 1& 0& 0&-1& 1& 0\\
-1& 1& 3& 1& 0& 1& 0&-1& 0& 1\\
 1&-1& 1& 3& 0& 0& 1& 0&-1& 1\\
 1& 1& 0& 0& 3& 1&-1& 1&-1& 0\\
-1& 0& 1& 0& 1& 3& 1& 1& 0&-1\\
 1& 0& 0& 1&-1& 1& 3& 0& 1&-1\\
 0&-1&-1& 0& 1& 1& 0& 3& 1& 1\\
 0& 1& 0&-1&-1& 0& 1& 1& 3& 1\\
 0& 0& 1& 1& 0&-1&-1& 1& 1& 3\\
\end{bmatrix}.
\]
Since this has rank $6$, $W(1^3)$ is reducible. To understand the composition
factors, note that the cell corresponding to the partition $1^3$
contains just the longest element $w_0$ of $\Sym_3$.
The Kazhdan-Lusztig basis element
$c_{w_0}=\sum_{w\in\Sym_3}\varepsilon(w)w$, and from this
one sees easily that the element $\sum_{w\in\langle s_1,s_2,s_3\rangle}w$
of $B$ acts trivially on $W(1^3)$, whence it follows that
$W(1^3)$ has no submodule isomorphic to $L(5)$ or $L(4,1)$.
Similarly, since $E(5)=\sum_{w\in\Sym_5}\varepsilon(w)w$ also
acts trivially on $W(1^3)$ (note that $E(5)e_i=0$ for all $i$),
$W(1^3)$ has no submodule isomorphic to $L(1^5)$. It follows that
$R(1^3)\cong L(4,1)$, proving (2)

Now consider $W(2,1)$. The corresponding cell of $\Sym_3$
contains the elements $r_1,r_2,r_1r_2$ and $r_2r_1$, where the simple
generators of $\Sym_3$ are denoted $r_1,r_2$ to avoid confusion with
$s_1,s_2\in B$. The corresponding Kazhdan-Lusztig basis elements
are $1-r_1$, $1-r_2$, $(1-r_1)(1-r_2)$ and $(1-r_2)(1-r_1)$.
In analogy with the previous case, one now verifies easily
that $\sum_{w\in\langle s_1,s_2,s_4\rangle}w$ and $E(5)$ act
trivially on $W(2,1)$ whence the latter cell module has no
simple submodule isomorphic to $L(5),L(4,1),L(3,2)$ or $L(1^5)$,
It follows by dimension that $R(3,2)\cong L(2^2,1)$,
completing the proof of (1), (2) and (3).

Clearly $\dim\CR=\dim B-\dim\overline B=20^2+10^2-(15^2+6^2)=239$,
which proves (4).

To prove (5) observe that since $F\in (\CP+\CI)$, the argument
concerning induced representations above shows that
$B/(\CP+\CI)\cong M_1(\C)\oplus M_4(\C)$. Hence
$$
\dim(\overline{\CP+\CI})\leq\dim(\overline N+\overline \CI)
\leq\dim\overline N+\dim\overline \CI,
$$
with equality if and only if $\overline N\cap\overline \CI=0$ and
$\overline{\CP+\CR}=\overline N$. But $\dim \overline{\CP+\CI} =\dim
\overline B-17$, and by the above argument, this is equal to
$\dim\overline N+\dim\overline \CI$, whence
$\overline{\CP+\CR}=\overline N$, i.e. $N=\Phi+\CR$.
\end{proof}

Note that it is possible that the methods of \cite{HW} could be applied
to give alternative proofs of some parts of Theorem \ref{r5}.

\subsection{The general classical case}\label{genclass}

Our objective in this section is to check some cases of
our main conjecture below, and reduce it to a specific question about
the action of $\Phi$ on certain cell modules of $B_r(3)$.
To do this we shall utilise the general results of
\S \ref{radical} above
about the radical of a cellular algebra.


In view of the results of the last subsection we make the
\begin{assertion}\label{conj}
\noindent{\bf Conjecture.} Let $B=B_r(3)$,
$E=\End_{\fsl_2(\C)}(V(2)^{\otimes r})$
and $\eta:B\lr E$ the natural surjection discussed above.
The kernel $N$ of $\eta$ is generated by the element
$\Phi=Fe_2F-F-\frac{1}{4}Fe_2e_{14}F\in B$.
\end{assertion}

To make use of the theory in the last section,
we shall develop more detail concerning the cellular structure of $B_r$.
We maintain the notation
above. In particular $\CR$ denotes the radical
of $B$, $\CI$ is the two-sided ideal generated by the $e_i$
and $\CP$ denotes the ideal generated by $\Phi$.

We start with the following elementary
observation.

\begin{lem}\label{sym}
Let $t\geq 4$ be an integer and consider the
symmetric group $\Sym_t$ generated by simple transpositions
$s_1,\dots,s_{t-1}$. The two-sided ideal of $\C\Sym_t$
generated by $F=(1-s_1)(1-s_3)$ is the sum of the minimal ideals
corresponding to all partitions with at least $4$ boxes in the first two
columns.
\end{lem}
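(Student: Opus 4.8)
The plan is to identify the element $F=(1-s_1)(1-s_3)$ as (four times) an idempotent, and to compute the two-sided ideal it generates by determining exactly which irreducible $\Sym_t$-modules it meets nontrivially. First I would observe that $\frac14 F$ is an idempotent in $\C\Sym_t$ — indeed $F=(1-s_1)(1-s_3)$ is a product of two commuting idempotents $(1-s_1)$, $(1-s_3)$ each of square twice itself — and that the left ideal $\C\Sym_t\cdot\frac14 F$ affords the induced representation $\Ind_K^{\Sym_t}(\varepsilon)$, where $K=\langle s_1,s_3\rangle\cong \Sym_2\times\Sym_2$ is embedded in $\Sym_t$ in the obvious way (acting on $\{1,2\}$ and $\{3,4\}$) and $\varepsilon$ is the sign character of $K$. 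This is exactly the argument already used in the proofs of \thmref{case4} and \thmref{r5} for $t=4,5$; the statement here is simply its uniform version.

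Next I would compute the constituents of $\Ind_K^{\Sym_t}(\varepsilon)$. The two-sided ideal of $\C\Sym_t$ generated by $F$ is the sum of the minimal two-sided ideals $M_\mu$ over exactly those partitions $\mu$ of $t$ for which the irreducible Specht module $S^\mu$ appears in $\Ind_K^{\Sym_t}(\varepsilon)$, i.e. for which $\Hom_{\Sym_t}(S^\mu,\Ind_K^{\Sym_t}(\varepsilon))\neq 0$. By Frobenius reciprocity this is $\Hom_K(\Res_K S^\mu,\varepsilon)\neq 0$, equivalently the sign character of $K$ occurs in $\Res_K S^\mu$. Since $\varepsilon=\varepsilon_{\{1,2\}}\otimes\varepsilon_{\{3,4\}}$, and restricting first to $\Sym_2\times\Sym_{t-2}$ then to $\Sym_2\times\Sym_2\times\Sym_{t-4}$ and applying the Littlewood--Richardson / Pieri rule (or Young's rule for the trivial part and its sign-twisted analogue), one sees that $\varepsilon$ occurs in $\Res_K S^\mu$ if and only if one can remove a vertical domino (two boxes in a single column, necessarily the first column after the first removal, by the combinatorics of the branching rule) twice — more precisely, if and only if the partition $\mu$ has at least two boxes in its second column, i.e. $\mu'_2\geq 2$, i.e. $\mu$ has at least $4$ boxes in its first two columns. (The condition ``$\varepsilon_{\{1,2\}}$ occurs in $\Res_{\Sym_2\times\Sym_{t-2}}S^\mu$'' means $\mu$ has a removable vertical domino, i.e. $\mu_1'\ge 2$; doing it twice forces $\mu_2'\ge 2$.) I would spell this out using the branching rule for $S^\mu\downarrow \Sym_{t-1}$ together with the fact that $\varepsilon$ of $\Sym_2$ occurs in $S^\nu\downarrow_{\Sym_1}\downarrow$ precisely when a vertical domino can be stripped.

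The remaining step is purely formal: a two-sided ideal of the semisimple algebra $\C\Sym_t$ generated by an element $x$ is $\bigoplus_{\mu} M_\mu$ over those $\mu$ with $\pi_\mu(x)\neq 0$, where $\pi_\mu$ is the projection to the $\mu$-block; and $\pi_\mu(F)\neq 0$ exactly when the left ideal $\C\Sym_t F$ (which carries $\Ind_K^{\Sym_t}\varepsilon$) has a constituent isomorphic to $S^\mu$, i.e. exactly for the $\mu$ identified above. The main obstacle, such as it is, is bookkeeping: writing out the double application of the branching rule cleanly and matching ``$\varepsilon$ appears twice'' with ``$\geq 4$ boxes in the first two columns'' without sign errors. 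I would phrase the domino-removal criterion carefully (noting that after stripping one vertical domino from column $1$ the second must also come from column $1$, since $\varepsilon$ of the $\Sym_2$ on $\{3,4\}$ again requires a vertical domino, whence $\mu_1'\geq 2$ and $\mu_2'\geq 2$), and conversely check that any such $\mu$ does admit the two removals, which is immediate. This yields exactly the asserted description and completes the proof.
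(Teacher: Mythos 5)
Your overall strategy is correct and is essentially the uniform version of the argument the paper uses at $t=4,5$ in Theorems \ref{case4} and \ref{r5}: identify $\tfrac14 F$ with the idempotent $e_\varepsilon$ of the Young subgroup $K=\langle s_1,s_3\rangle$, note that $\C\Sym_t\cdot\tfrac14F\cong \Ind_K^{\Sym_t}(\varepsilon)$, and read off the constituents by Frobenius reciprocity. (The paper itself offers no proof of Lemma \ref{sym}, merely asserting it is an easy exercise by induction on $t$, so your proposal goes beyond the text.)

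However, the combinatorial step at the end contains two concrete errors. First, the Pieri/branching rule for $\Res_{\Sym_2\times\Sym_{t-2}}S^\mu$ containing $\varepsilon\boxtimes S^\nu$ asserts that $\mu/\nu$ is a \emph{vertical strip} of size $2$ (two boxes in distinct rows, not necessarily in the same column), not a \emph{vertical domino}. Second, and more seriously, your claimed chain of equivalences ``$\mu_2'\ge 2$ $\iff$ $\mu$ has at least $4$ boxes in its first two columns'' is false: the partitions $(2,1^2)$ and $(1^4)$ have $\mu_1'+\mu_2'\ge 4$ but $\mu_2'<2$, and both are constituents (indeed they are explicitly listed in the proof of Theorem \ref{case4}). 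So the condition you derive, $\mu_2'\ge 2$, is strictly stronger than the correct one, $\mu_1'+\mu_2'\ge 4$, and would give the wrong ideal already for $t=4$.

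A clean way to finish is to recast the branching calculation as an outer product. One has
$\Ind_K^{\Sym_t}(\varepsilon)\cong\Ind_{\Sym_4\times\Sym_{t-4}}^{\Sym_t}\bigl(\Ind_K^{\Sym_4}(\varepsilon)\boxtimes \C\Sym_{t-4}\bigr)$, and $\Ind_K^{\Sym_4}(\varepsilon)\cong S^{(2^2)}\oplus S^{(2,1^2)}\oplus S^{(1^4)}$. Inducing $S^\lambda\boxtimes\C\Sym_{t-4}$ up to $\Sym_t$ yields, by Littlewood--Richardson, exactly those $S^\mu$ with $\lambda\subseteq\mu$. Hence $S^\mu$ occurs in $\Ind_K^{\Sym_t}(\varepsilon)$ if and only if $\mu$ contains one of $(2^2),(2,1^2),(1^4)$, and a short case check on $(\mu_1',\mu_2')$ shows that this is equivalent to $\mu_1'+\mu_2'\ge 4$, i.e.\ $\mu$ has at least $4$ boxes in its first two columns. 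With this correction the rest of your argument (Frobenius reciprocity plus the description of the two-sided ideal generated by an idempotent in a semisimple algebra) goes through.
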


This is an easy exercise, which may be proved by induction on $t$.

\begin{thm}\label{lambda0}
Let $\eta: B_r(3)\lr E:=\End_{\fsl_2(\C)}(V(2)^{\otimes r})$ be the surjection
discussed above, and let $N=\Ker(\eta)$. Define $\Lambda^0\subseteq\Lambda$ by
$\Lambda^0=\{(t), (t-1,1),1^3\mid 0\leq t\leq r;\;\;
t\equiv r(\text{mod $2$})\}$, and let $\Lambda^1:=\Lambda\setminus\Lambda^0$.
Let $\Phi$ be the element of $B=B_r$ defined above.
Then
\begin{enumerate}
\item For $\lambda\in\Lambda^1$, there is an element
$x_\lambda\in L(\lambda)$ such that $\Phi x_\lambda\neq 0$.
\item $N$ acts trivially on $L(\lambda)$
if and only if $\lambda\in\Lambda^0$
\item $E\cong\oplus_{\lambda\in\Lambda^0}\ov B(\lambda)$.
\item If $\CP$ denotes the ideal of $B$ generated by $\Phi$,
we have $\CP+\CR=N$, where $\CR$ is the radical of $B$.
\end{enumerate}
\end{thm}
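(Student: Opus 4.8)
The strategy is to establish the four assertions more or less in the order stated, feeding the earlier parts into the later ones, with the general cellular-algebra machinery of Section~\ref{radical} doing the heavy lifting at the end. First I would prove (1). For $\lambda\in\Lambda^1$, by \lemref{sym} (applied with $t=r$) the partition $\lambda$ has at least $4$ boxes in its first two columns, so $\lambda$ (if $|\lambda|=r$) lies in the image of the two-sided ideal of $\C\Sym_r$ generated by $F$; more generally, tracking how $F=(1-s_1)(1-s_3)$ and the idempotent $\frac14 F$ behave on cell modules, one shows that $F$ acts nonzero on $L(\lambda)$ for $\lambda\in\Lambda^1$. Then one must upgrade ``$F$ acts nonzero'' to ``$\Phi$ acts nonzero'' on $L(\lambda)$: this is the analogue of the computations in the proofs of \thmref{case4} and \thmref{r5}, using the identity $e_2Fe_2=e_2+e_2e_{1,4}$ and the fact that, on the relevant cell modules, the correction term $-F-\frac14 Fe_2e_{1,4}F$ does not kill the image of $F$. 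This is the step I expect to be the main obstacle, since it is precisely the content of the Conjecture~\ref{conj} that $\Phi$ (not merely $F$) generates $N$, and a clean general argument here may require care about which cell modules $W(\mu)$ with $\mu\in\Lambda^1$ actually appear.

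Next, (2) follows by combining (1) with the observation that $\Phi\in N$ (the quantum/classical analogue of \propref{propsPhi}(3), together with the reduction $q\to 1$): if $\lambda\in\Lambda^1$ then $N\ni\Phi$ acts nonzero on $L(\lambda)$, so $NL(\lambda)\neq 0$; conversely, for $\lambda\in\Lambda^0$ one checks directly (again using the Kazhdan--Lusztig-basis computations as in the proofs of \thmref{case4} and \thmref{r5}: the elements $\sum_{w}w$ over suitable parabolic subgroups, and $E(r)=\sum_{w\in\Sym_r}\varepsilon(w)w$, act trivially) that $N$ annihilates $L(\lambda)$. Then (3) is immediate from \corref{kereta}: $E\cong B/N\cong\oplus_{\lambda\in\Lambda^0}\ov B(\lambda)$, where $\Lambda^0$ is exactly the set of $\lambda$ on which $N$ acts trivially, which is what (2) identifies.

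Finally, (4) is where Section~\ref{radical} is applied directly. Since $N=\Ker(\eta)$ is a two-sided ideal and, $\eta$ being a $*$-algebra map onto an endomorphism algebra with its natural $*$, we may arrange $N^*=N$; by \thmref{rad}(3) the inclusion $N\supseteq\CR$ holds if and only if $R(\lambda)\subseteq NW(\lambda)$ for every $\lambda\in\Lambda^0$. The plan is to verify this cellular criterion: for $\lambda\in\Lambda^0$ with $R(\lambda)\neq 0$ one wants $NW(\lambda)$ to be a nonzero submodule of $R(\lambda)$, and if one knows (in the spirit of \corref{corirred}) that each such $R(\lambda)$ is irreducible, then $NW(\lambda)\neq 0$ forces $NW(\lambda)=R(\lambda)$. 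Thus the crux is showing $NW(\lambda)\neq 0$ for the relevant $\lambda\in\Lambda^0$, i.e. that $\Phi$ (or some element of $\CP$) does not annihilate $W(\lambda)$ even though it annihilates $L(\lambda)$; this is exactly the ``reduction to a specific question about the action of $\Phi$ on certain cell modules'' advertised before the Conjecture. Once $N\supseteq\CR$ is established, the identity $\CP+\CR=N$ follows by a dimension count identical in form to the one in the proof of \thmref{r5}: both $\CP+\CR$ and $N$ have the same image in $B/\CR$ (generated by the image of $\Phi$, whose two-sided span there is pinned down by \lemref{sym} and part (3)), and both contain $\CR$, so they coincide.
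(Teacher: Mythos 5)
Your proposal captures the broad structure but misreads the difficulty in two places.

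For (1), you identify the passage from ``$F$ acts nonzero on $L(\lambda)$'' to ``$\Phi$ acts nonzero on $L(\lambda)$'' as the main obstacle, and worry that it amounts to the whole Conjecture~\ref{conj}. In fact the paper sidesteps this entirely. For $\lambda\in\Lambda^1$ with $t=|\lambda|\geq 4$, one considers the subalgebra of $B$ generated by $\{s_ie_{t+1}e_{t+3}\cdots e_{t+2k-1}\mid 1\leq i\leq t-1\}$ (where $r=t+2k$), which is isomorphic to $\C\Sym_t$. On the corresponding part of the cell module, the basis dangles have through-strings at positions $1,\dots,t$, so in particular $e_2$ sends them into $B(<\lambda)$, i.e.\ acts as zero. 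Consequently $\Phi=Fe_2F-F-\tfrac14 Fe_2e_{1,4}F$ acts simply as $-F$, and Lemma~\ref{sym} applies immediately. For $t\leq 3$, the paper instead plants the computations of \S\S\ref{4},\ref{5} (the cases $r=4,5$) on the ``leftmost'' part of the diagram, completed with $e_5e_7\cdots$ or $e_6e_8\cdots$. No separate upgrade argument is required.

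For (2), your proposal proposes checking directly that $N$ annihilates $L(\lambda)$ for $\lambda\in\Lambda^0$ via Kazhdan--Lusztig computations. The paper instead uses a counting argument: (1) shows the set of $\lambda$ with $NL(\lambda)=0$ is contained in $\Lambda^0$; but $|\Lambda^0|=r+1$, and $V(2)^{\otimes r}$ has exactly $r+1$ distinct $\fsl_2$-isotypic components, so $N$ acts trivially on at least $r+1$ of the $L(\lambda)$, forcing equality with $\Lambda^0$. This is both cleaner and avoids any verification of the inclusion in $\Lambda^0$.

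For (4), your route is unnecessarily heavy and also conflates (4) with the stronger Conjecture. You begin by proposing to ``establish $N\supseteq\CR$,'' but this is automatic: $N$ is the kernel of a surjection onto the semisimple algebra $E$, so $\CR\subseteq N$ without any appeal to Theorem~\ref{rad}. Theorem~\ref{rad}(3) and Corollary~\ref{corirred} would be relevant if you wanted $\CP\supseteq\CR$ (i.e.\ the Conjecture), but that is \emph{not} what (4) asserts. The paper's argument for (4) is two lines: $\CP+\CR$ is a two-sided ideal of $B$; by (1) it acts nontrivially on $L(\lambda)$ for $\lambda\in\Lambda^1$, while since $\Phi\in N$ and $\CR\subseteq N$ it annihilates $L(\lambda)$ for $\lambda\in\Lambda^0$; hence $\CP+\CR$ and $N$ have the same image in $\overline B=\oplus_\lambda\overline B(\lambda)$, and as both contain $\CR$, they coincide. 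Your final paragraph eventually arrives at this, but the preceding detour through the radical criterion is a genuine misstep: it would (if carried out) prove more than (4), and in fact the paper explicitly flags that step as unresolved (Conjecture~\ref{conj}).
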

\begin{proof}
Take $\lambda\in\Lambda^1$. If $t=|\lambda|\geq 4$,
consider the subalgebra of $B$ generated by the elements
$\{s_ie_{t+1}e_{t+3}\dots e_{t+2k-1}\mid 1\leq i\leq t-1\}$,
where $r=t+2k$. This is isomorphic to $\C\Sym_t$, and
$\Phi$ acts on the corresponding cell modules as $-F$.
The statement (1) is now clear for this case, given Lemma
\ref{sym}.

Now suppose $t=|\lambda|\leq 3$. Now in analogy to the
above argument, we consider the ``leftmost'' part of the
diagrams, completed with $e_{5}e_{7}\dots$
or $e_6e_8\dots$ on the right according as $t$ is odd or even.
The cases $r=4,5$, which are known by \S\S \ref{4},\ref{5}
produce, when appropriately completed, elements $x_\lambda\in L(\lambda)$
as required. This proves (1).

To see (2), observe that (1) shows that $N$ acts non-trivially
on the simple modules $L(\lambda)$ for $\lambda\in \Lambda^1$,
and so the set of $\lambda$ such that $N$ acts trivially
in $L(\lambda)$ is contained in $\Lambda^0$. But $|\Lambda^0|=r+1$,
and it is easy to see that $V^{\otimes r}$ has $r+1$ distinct
simple components (as $\fsl_2$-module). It follows that
$N$ acts trivially in at least $r+1$ of the simple modules
$L(\lambda)$, and (2) is immediate (cf. \ref{kereta}), as
is (3).

Since $\Phi\in\CP+\CR$, the latter is a two-sided ideal of $B$
which acts non-trivially on $L(\lambda)$ for $\lambda\in\Lambda^1$.
But $\Phi\in N$, so that $\CP+\CR$ acts trivially on $L(\lambda)$
for $\lambda\in\Lambda^0$. The statement (4) follows.
\end{proof}

Combined with the results of \S\ref{radical} Theorem \ref{lambda0} leads
to the following criterion for the truth of Conjecture \ref{conj}
\begin{cor}\label{radcriterion}
The conjecture \ref{conj} is equivalent to the following statement.
For each $\lambda\in\Lambda^0$
(as above) the $B_r$-submodule of $W(\lambda)$ generated by
$\Phi W(\lambda)$ contains $R(\lambda)$.
\end{cor}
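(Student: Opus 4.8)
The plan is to derive the equivalence from \thmref{rad}(3), applied to the two-sided ideal $J=\CP$ generated by $\Phi$, with the translation between the two formulations supplied by \thmref{lambda0}. First I would note that Conjecture~\ref{conj} asserts $N=\CP$; since $\CP+\CR=N$ by \thmref{lambda0}(4) (so in particular $\CP\subseteq N$), the equality $N=\CP$ is equivalent to the single inclusion $\CR\subseteq\CP$. Thus the task reduces to deciding when the radical of $B=B_r(3)$ is contained in $\CP$, which is precisely what \thmref{rad}(3) addresses.

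To apply that result I would first check its hypothesis $\CP^*=\CP$. The elements $s_1,s_3,e_2,e_{1,4}$ are all fixed by the cellular (diagram-flipping) involution $*$; since $s_1$ and $s_3$ commute we get $F^*=F$, and since $e_2$ and $e_{1,4}$ act on disjoint pairs of strands they commute, so $(Fe_2e_{1,4}F)^*=Fe_{1,4}e_2F=Fe_2e_{1,4}F$. Hence $\Phi^*=\Phi$ and $\CP^*=\CP$. Next I would identify the index set occurring in \thmref{rad}(3), namely $\{\lambda\in\Lambda\mid \CP L(\lambda)=0\}$, with the explicit set $\Lambda^0$ of \thmref{lambda0}: since $\CR$ acts as zero on every irreducible $L(\lambda)$ (\lemref{annihilators}(5)) and $N=\CP+\CR$, we have $NL(\lambda)=\CP L(\lambda)$ for each $\lambda$, so $\CP L(\lambda)=0\iff NL(\lambda)=0$, which by \thmref{lambda0}(2) holds precisely when $\lambda\in\Lambda^0$. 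Finally, for any $\lambda$ one has $\CP W(\lambda)=(B\Phi B)W(\lambda)=B\cdot\Phi W(\lambda)$, which is visibly the $B_r$-submodule of $W(\lambda)$ generated by the subspace $\Phi W(\lambda)$.

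Feeding these identifications into \thmref{rad}(3) gives: $\CR\subseteq\CP$ if and only if, for every $\lambda\in\Lambda^0$, $R(\lambda)\subseteq\CP W(\lambda)$, i.e. the $B_r$-submodule of $W(\lambda)$ generated by $\Phi W(\lambda)$ contains $R(\lambda)$. Combined with the reduction of Conjecture~\ref{conj} to $\CR\subseteq\CP$ from the first paragraph, this is exactly the asserted equivalence. The only genuine computation is the verification that $\Phi$ is $*$-invariant, and matching the two occurrences of the symbol $\Lambda^0$ is the only point requiring care; everything else is bookkeeping, since \thmref{rad} and \thmref{lambda0} already carry the full weight of the argument, so I do not anticipate any real obstacle here.
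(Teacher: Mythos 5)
Your proof is correct and follows exactly the paper's argument: reduce Conjecture~\ref{conj} to $\CR\subseteq\CP$ via Theorem~\ref{lambda0}(4), then invoke Theorem~\ref{rad}(3) with $J=\CP$. The paper leaves implicit the verifications you spell out (that $\Phi^*=\Phi$, that $\{\lambda:\CP L(\lambda)=0\}$ coincides with $\Lambda^0$, and that $\CP W(\lambda)=B\cdot\Phi W(\lambda)$), and all of them check out.
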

\begin{proof}
It follows from Theorem \ref{lambda0}(4) that the conjecture is
equivalent to the statement that $\CP\supseteq \CR$. By
Theorem \ref{rad}(3) this is equivalent to the stated criterion.
\end{proof}

Next we show that the Conjecture is true for $r=5$.

\begin{prop}\label{prooffr5}
If $r=5$, $\CP=\langle\Phi\rangle$ contains the radical $\CR$ of $B$.
Hence by \ref{radcriterion} the conjecture is true for $r=5$.
\end{prop}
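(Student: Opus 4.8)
The plan is to deduce the proposition from \corref{corirred} applied to the two‑sided ideal $J=\CP=\langle\Phi\rangle$, using the detailed structure of $B_5(3)$ recorded in \thmref{r5}; no new computation on cell modules should be required. Four things have to be assembled: (a) that $\CP$ is stable under $*$; (b) an identification of the set $\{\lambda\mid\CP L(\lambda)=0\}$; (c) that for each such $\lambda$ the radical $R(\lambda)$ is zero or irreducible; and (d) that for each such $\lambda$ with $R(\lambda)\neq 0$ one has $\CP W(\lambda)\neq 0$.

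Items (a)--(c) are essentially bookkeeping. For (a): $s_1,s_3$ commute and are $*$‑fixed, so $F^*=F$, and $e_2$ (acting on tensor positions $2,3$) commutes with $e_{1,4}$ (acting on positions $1,4$), so $\Phi^*=\Phi$ and hence $\CP^*=\CP$; this makes \thmref{rad}(3), and therefore \corref{corirred}, applicable ($B_5(3)$ being quasi‑hereditary by \propref{brss}(1)). For (b): by \thmref{r5}(5), $N=\CP+\CR$, and since $\CR$ kills every irreducible module, $\CP L(\lambda)=0$ iff $NL(\lambda)=0$, which by \thmref{lambda0}(2) means $\lambda\in\Lambda^0=\{(5),(4,1),(3),(2,1),(1),(1^3)\}$. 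For (c): by \thmref{r5}(1), $W(\lambda)$ is simple (so $R(\lambda)=0$) for $\lambda\in\{(5),(4,1),(3),(1)\}$, while \thmref{r5}(1)--(3) identify the radicals of the two remaining cell modules as the irreducibles $R(2,1)\cong L(2^2,1)$ and $R(1^3)\cong L(2,1^3)$.

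The substantive step is (d): show $\CP W(2,1)\neq 0$ and $\CP W(1^3)\neq 0$. The key is that $(2^2,1)$ and $(2,1^3)$ lie in $\Lambda^1$ — they are partitions of $5$ other than $(5)$ and $(4,1)$ — so by \thmref{lambda0}(1) the element $\Phi$ acts nontrivially on $L(2^2,1)$ and on $L(2,1^3)$. (On these modules, which here are just Specht modules on which the $e_i$ vanish, $\Phi$ acts as $-F=-(1-s_1)(1-s_3)$, and by \lemref{sym} this is nonzero on the Specht module of every partition of $5$ with at least four boxes in its first two columns, i.e.\ on all but $(5)$ and $(4,1)$.) Since $R(2,1)\cong L(2^2,1)$ is a submodule of $W(2,1)$ and $R(1^3)\cong L(2,1^3)$ is a submodule of $W(1^3)$, and $\Phi$ preserves these submodules ($\CP$ being an ideal), we get $0\neq\Phi R(2,1)\subseteq\CP W(2,1)$ and $0\neq\Phi R(1^3)\subseteq\CP W(1^3)$. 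All hypotheses of \corref{corirred} then hold, so $\CP\supseteq\CR$; with $N=\CP+\CR$ this gives $N=\CP$, i.e.\ the conjecture for $r=5$ (equivalently, one invokes \corref{radcriterion}).

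I do not anticipate a genuine obstacle here: the real work — determining the composition factors of the non‑semisimple cell modules $W(2,1)$ and $W(1^3)$ and recognising their radicals as irreducibles indexed by $\Lambda^1$ — is precisely what \thmref{r5} already does, absorbing the Gram‑matrix rank computations and the Kazhdan--Lusztig argument. Should one wish to avoid citing \thmref{r5}(1)--(3), an alternative for (d) is to compute the action of $\Phi$ directly on the $20$‑ and $10$‑dimensional cellular bases of \S\ref{cell-br} and produce a vector it does not annihilate; sidestepping that computation is the point of the present route. The one subtlety to spell out is why $\CP W(\lambda)\neq 0$ suffices: $R(\lambda)$ is irreducible and is the unique maximal submodule of the cell module $W(\lambda)$, so any nonzero submodule of $W(\lambda)$ contains $R(\lambda)$, which is exactly the condition in \corref{radcriterion} and is packaged into \corref{corirred}.
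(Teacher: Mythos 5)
Your proposal is correct, but it takes a genuinely different route from the paper. The paper's own proof dispenses with the framework argument in a single sentence (by Theorem~\ref{r5} we are in the situation of Corollary~\ref{corirred}, so it suffices to show $\Phi$ acts nontrivially on $W(1^3)$ and $W(2,1)$) and then spends the bulk of the proof on two explicit cellular‑basis computations: it exhibits the basis vectors $C_{45}$ and $C_{45,\tau_1}$, works out the actions of $s_1,s_3,e_2,e_{14}$ on them diagrammatically and via the Kazhdan--Lusztig cell representation, and shows directly that $\Phi C_{45}\neq 0$ and $\Phi C_{45,\tau_1}\neq 0$. You avoid these computations entirely by observing that Theorem~\ref{r5}(2),(3) identifies $R(1^3)\cong L(2,1^3)$ and $R(2,1)\cong L(2^2,1)$, that $(2,1^3)$ and $(2^2,1)$ lie in $\Lambda^1$, and that Theorem~\ref{lambda0}(1) then forces $\Phi$ to act nontrivially on these irreducibles, hence on the corresponding radicals inside the cell modules. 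This is sound (there is no circularity: the instance of Theorem~\ref{lambda0}(1) you invoke concerns partitions of size $5$, whose proof rests only on Lemma~\ref{sym} and not on the present proposition), and it is arguably cleaner because it trades a hand computation for a reference. What the paper's computational route buys in exchange is an explicit nonvanishing vector $\Phi C_{45}$, $\Phi C_{45,\tau_1}$ which can be reused in the quantum setting via specialisation; your argument produces existence but not an explicit witness. One small point of phrasing: the justification that $\CP W(\lambda)\neq 0$ suffices is really that $\CP$ annihilates $L(\lambda)=W(\lambda)/R(\lambda)$, so $\CP W(\lambda)$ lands inside $R(\lambda)$, and irreducibility then gives equality -- which is precisely the argument in Corollary~\ref{corirred}; your alternative phrasing (any nonzero submodule of $W(\lambda)$ contains $R(\lambda)$) happens to be true here because $R(\lambda)$ is the unique maximal submodule and is simple, but the route through $\CP W(\lambda)\subseteq R(\lambda)$ is the one the cited corollary actually uses and is more robust.
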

\begin{proof}
In view of Theorem \ref{r5}, it suffices to show that
$\CP W(\lambda)=R(\lambda)$ for $\lambda=(1^3)$ and $\lambda=(2,1)$.
But again by Theorem \ref{r5}, we have the situation of Corollary
\ref{corirred} here, whence it suffices to show simply that $\Phi$ acts
non-trivially on the cell modules $W(1^3)$ and $W(2,1)$. This will require
two calculations, which we now proceed to outline.

\noindent{\it The case $W(1^3)$.} In this case $M(\lambda)=\{((ij),\tau)\}$
where $(ij)$ is a transposition in $\Sym_5$ and $\tau$ is the unique standard
tableau of shape $1^3$. Thus we may write a basis for $W(1^3)$
as $\{C_{ij}\mid 1\leq i<j\leq 5\}$. Recalling that the Kazhdan-Lusztig
basis element of $\C\Sym_3$ corresponding to $(\tau,\tau)$ is
$E(3):=\sum_{w\in\Sym_3}\varepsilon(w)w$, the following facts are easily
verified using the diagrammatic representation of $B_5$.
$$
\begin{aligned}
s_1C_{45}=-C_{45};\;\;s_3C_{45}=&C_{35};\;\;FC_{45}=2(C_{45}-C_{35});\\
e_2C_{45}=0;\;\;e_2C_{35}&=-C_{23};\;\;e_{14}C_{23}=0.\\
\end{aligned}
$$
Using these equations one calculates in straightforward fashion that
$$
\Phi C_{45}=2(C_{23}- C_{13}- C_{24}+ C_{14}- C_{45}+ C_{35})\neq 0.
$$

\noindent{\it The case $W(2,1)$.} In this case $M(\lambda)=\{((ij),\tau_k)\}$
where $(ij)$ is a transposition in $\Sym_5$ and $\tau_k$ is one of the
two standard
tableau of shape $(2,1)$.
Explicitly,
$$
\tau_1=\begin{matrix}
1 & 3\\2 & \\
\end{matrix},\;\;\;\;\;
\tau_2=\begin{matrix}
1 & 2\\3 & \\
\end{matrix}
$$
Thus we may write a basis for $W(2,1)$
as $\{C_{ij,\tau_k}\mid 1\leq i<j\leq 5,\;\;k=1,2\}$.
In this case we need to recall that the Kazhdan-Lusztig cell
representation of $\C\Sym_3$ which corresponds to $(2,1)$ may be thought of
as having basis $\{c_{\tau_1},c_{\tau_2}\}$ and action by $\Sym_3=\langle r_1,r_2\rangle$
given by
$$
r_1c_{\tau_1}=-c_{\tau_1};\;\;r_1c_{\tau_2}=c_{\tau_2}-c_{\tau_1};\;\;
r_2c_{\tau_1}=c_{\tau_1}-c_{\tau_2};\;\;r_2c_{\tau_2}=-c_{\tau_2}.
$$
With these facts one verifies easily the following facts
$$
s_1C_{45,\tau_1}=-C_{45,\tau_1};\;\;s_3C_{45,\tau_1}=C_{35,\tau_1};\;\;
FC_{45,\tau_1}=2(C_{45,\tau_1}-C_{35,\tau_1}).
$$
Further,
$$
\begin{aligned}
e_2C_{45,\tau_1}=&0;\;\;e_2C_{35,\tau_1}=C_{23,\tau_1}-C_{23,\tau_2};\\
e_2FC_{45,\tau_1}=&2(C_{23,\tau_2}-C_{23,\tau_1});\;\;e_{14}C_{23,\tau_k}=0
\text{ for }k=1,2.\\
\end{aligned}
$$

Using these equations, it is straightforward to calculate that
\begin{equation}\label{case(21)}
\begin{aligned}
\Phi C_{45,\tau_1}=&2(C_{23,\tau_2}-C_{13,\tau_2}-C_{24,\tau_2}+C_{14,\tau_2}\\
&-C_{23,\tau_1}+C_{13,\tau_1}+C_{24,\tau_1}-C_{14,\tau_1}
-C_{45,\tau_1}+C_{35,\tau_1})\neq 0.
\end{aligned}
\end{equation}
This completes the proof of the Proposition.
\end{proof}

%

A computer calculation has been done to verify the case $r=6$.

\begin{thm}\label{comp} Let $\eta$ be the surjective homomorphism
from $B_r:=B_r(3)$ to $E_r:=\End_{\fsl_2(\C)}V(2)^{\otimes r}$,
and let $\Phi\in B_r$ be the element defined above. Then for
$r\leq 6$ $\Phi$ generates the kernel of $\eta$.
\end{thm}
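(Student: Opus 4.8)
The plan is to dispose of the cases $r\le 3$ and $r=4,5$ by results already in hand, so that the only genuine work is the case $r=6$, which I would reduce --- via the general theory of \secref{radical} --- to a finite, explicit check on the cell modules of $B_6(3)$.

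First I would handle the small cases. For $r\le 3$ the surjection $\eta$ is an isomorphism by the dimension count at the beginning of \S\ref{4} (using \eqnref{d2r} together with the diagrammatic basis of $B_r(3)$), so $\Ker(\eta)=0$ and there is nothing to prove. For every $r\ge 4$ the inclusion $\CP:=\langle\Phi\rangle\subseteq N:=\Ker(\eta)$ is automatic, since $\Phi\in B_4(3)\subseteq B_r(3)$ and $\Phi$ acts as $0$ on $V(2)^{\otimes 4}$ by \propref{propsPhi}(3), hence on $V(2)^{\otimes r}$; so in each remaining case it suffices to prove the reverse inclusion $N\subseteq\CP$. The case $r=4$ is precisely \thmref{case4}, and the case $r=5$ is \propref{prooffr5} (which itself rests on \thmref{r5} and \corref{radcriterion}).

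For $r=6$ the plan is as follows. By \thmref{lambda0}(4) we have $N=\CP+\CR$, where $\CR$ is the radical of $B_6(3)$; hence $N=\CP$ is equivalent to $\CR\subseteq\CP$, and by \corref{radcriterion} this is in turn equivalent to the assertion that, for each $\lambda\in\Lambda^0$, the $B_6(3)$-submodule of $W(\lambda)$ generated by $\Phi W(\lambda)$ contains $R(\lambda)$. Here $\CT(6)=\{0,2,4,6\}$, so the term ``$1^3$'' of \thmref{lambda0} does not occur (as $3\notin\CT(6)$) and $\Lambda^0=\{(0),(2),(1^2),(4),(3,1),(6),(5,1)\}$. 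For those $\lambda\in\Lambda^0$ whose cell module is simple, $R(\lambda)=0$ and the condition is vacuous, so only finitely many cell modules remain to be examined. For each of them I would realise $W(\lambda)$ on its standard basis indexed by $M(\lambda)$, write down the matrices of the generators $s_i,e_i$ of $B_6(3)$ and hence of $\Phi=Fe_2F-F-\tfrac{1}{4}Fe_2e_{1,4}F$ with $F=(1-s_1)(1-s_3)$, compute $R(\lambda)$ as the radical of the Gram matrix of $\phi_\lambda$, form the subspace $\Phi W(\lambda)$, and verify that closing it under the action of the $s_i$ and $e_i$ produces a subspace containing $R(\lambda)$ --- all of which is linear algebra over $\Q$ and has been carried out by computer.

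The main obstacle is the scale and correctness of the $r=6$ computation: $\dim B_6(3)=11!!=10395$, and although only the action on the cell modules indexed by $\Lambda^0$ is needed, the submodule-closure step requires iterating the $2(r-1)=10$ generators, and one must implement correctly the Kazhdan--Lusztig data entering the basis elements $C^\lambda_{(S,\tau),(S',\tau')}$. I would also independently corroborate which cell modules of $B_6(3)$ are non-simple --- for instance via Gram determinants, or by comparison with the known block structure of the Brauer algebra at $\delta=3$, and by cross-checking $\dim N$ against $\dim E_6$ computed from \eqnref{d2r} --- so as to be certain that the finite list of $\lambda$ requiring a check is complete. Granting these, the case $r=6$, and with it the theorem, follows.
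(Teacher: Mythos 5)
Your handling of the cases $r\le 5$ matches the paper exactly. For $r=6$ the paper and you both ultimately resort to a computer verification, but the computations differ in a substantive way. The paper directly computes $\dim\langle\Phi\rangle$ inside the full $10395$-dimensional algebra $B_6(3)$ (using a noncommutative Gr\"obner basis implementation in Magma) and checks it equals $\dim\Ker(\eta)=\dim B_6(3)-\dim E_6$, with $\dim E_6$ known from \eqnref{d2r}. You instead invoke \thmref{lambda0}(4) together with \corref{radcriterion} to reduce the claim $\CP\supseteq\CR$ to the assertion that for each of the (at most five) $\lambda\in\Lambda^0$ with $R(\lambda)\neq 0$, the $B_6(3)$-submodule generated by $\Phi W(\lambda)$ contains $R(\lambda)$; this confines the linear algebra to cell modules of dimension at most $45$, which is a far lighter computation than the paper's, at the price of depending on the earlier structural results of \secref{radical} and \secref{classical3} (and on correctly implementing the Kazhdan--Lusztig combinatorics entering $M(\lambda)$ and $\phi_\lambda$). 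One point worth making explicit in your argument: \corref{radcriterion} rests on \thmref{rad}(3), whose hypothesis requires the ideal $\CP$ to be $*$-stable; this holds because $F^*=F$, $e_2^*=e_2$, $e_{1,4}^*=e_{1,4}$, and $e_2$ commutes with $e_{1,4}$, so $\Phi^*=\Phi$. Your enumeration $\Lambda^0=\{(0),(2),(1^2),(4),(3,1),(6),(5,1)\}$ is correct (the $(1^3)$ term is absent since $3\notin\CT(6)$), and for $\lambda$ with $|\lambda|=6$ the cell modules are Specht modules and hence simple, so only $(0),(2),(1^2),(4),(3,1)$ need inspection. Both approaches are sound; yours is computationally leaner and closer in spirit to the theory developed in \secref{radical}, while the paper's is more direct and requires no bookkeeping of cellular bases.
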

\begin{proof} We have proved the result for $r\leq 5$.
The case $r=6$ was checked by a computer calculation, which verified that
$\dim \langle \Phi\rangle$ is correct in that case. Since
we know that $\Phi\in\Ker(\eta),$ the result follows.
\end{proof}

We are grateful to Derek Holt for doing this computation for us
using the Magma computational algebra package, with an
implementation of noncommutative Gr\"obner basis due to Allan Steel.

\section{The quantum case}

In this section, we develop the theme of \S\ref{bmw-basic}
and consider the BMW algebra $BMW_r(q)$ over
$\CA_q$, and its specialisation $BMW_r(\CK)$.
The results of the last
section on the Brauer algebra all generalise to the present
case, and we deduce some new ones through the technique of
specialisation. One of the key observations is 
that $BMW_r(q)$ has the $\C$ algebra $BMW_r(1)\cong B_r(3)$
(cf. Lemma \ref{bmwspec}) as a specialisation.

\subsection{Specialisation and cell modules.}\label{speccell}
In analogy with the case of the Brauer algebra,
of which it is a deformation,
$BMW_r(q)$ has a cellular structure 
\cite[Theorem 3.11]{X} and is also
quasi hereditary \cite[Theorem 4.3]{X}. For each partition
$\lambda\in\Lambda(r)$, there is therefore
a cell module $W_q(\lambda)$ of
dimension $w_\lambda$ for $BMW_r(q)$. Each cell module has a
non-zero irreducible head $L_q(\lambda)$, and these
irreducibles form a complete set of representatives of the
isomorphism classes of simple $BMW_r(q)$-modules. Furthermore,
$BMW_r(q)$ is semisimple if and only if all the cell modules are
simple (see \cite[\S 3]{X}).

Recall that $BMW_r(q)$ is the $\CA_q$-algebra defined by the 
presentation (\ref{braidgiq}),
where $\CA_q$ is the localisation of $\C[q^{\pm 1}]$ at
the multiplicative subset $\CS$
generated by $[2]_q$, $[3]_q$ and $[3]_q-1$. 
By Lemma \ref{bmwspec}, $BMW_r(q)$ may be thought of as an {\em integral
form} of $BMW_r(\CK)$.

One may identify $BMW_r(q)$ with the $\CA_q$-algebra 
generated by $(r,r)$-tangle diagrams, which satisfy
the usual relations (cf.
e.g., \cite[Definition 2.5]{X}). For each
Brauer $r$-diagram $T$ \cite[\S 4]{GL96}, 
it is explained in \cite[p. 285]{X} how to 
construct an $(r, r)$-tangle diagram $T_q$ by lifting each intersection
in $T$ to an appropriate crossing. The tangle diagrams obtained this
way form a basis of $BMW_r(q)$, which we shall denote by $\CT_q$.

The cell modules $W_{q}(\lambda)$ of $BMW_r(q)$ are
parametrised by partitions $\lambda\in \Lambda(r)$. They may also
be described diagramatically, in a similar way to the cell modules 
of the Brauer algebra $B_r(3)$ (cf. \S\ref{cell-br}). We proceed 
to give this description.
Let $t\in\CT(r)$; that is, $0\leq t\leq r$ and $r-t\in 2\Z$. 
For a partition $\lambda$ of $t$, we take $M(\lambda)$ to be the 
set defined in \S\ref{cell-br} for the Brauer algebra, viz
$M(\lambda)$ is the set of pairs $(S,\tau)$ where $S$ is an involution 
in $\Sym_r$ with $|\lambda|=t$ fixed points, and $\tau$ is a 
standard tableau of shape $\lambda$. In analogy with \S\ref{cell-br},
if $(S,\tau)$ and $(S',\tau')$ are two elements of $M(\lambda)$,
we obtain the (cellular) basis element 
$C^\lambda_{(S,\tau),(S',\tau')}(q)$ of $BMW_r(q)$ by 
\be\label{cellbasis}
C^\lambda_{(S,\tau),(S',\tau')}(q)
=\sum_{w\in\Sym_{t}}
P_{w(\tau,\tau'),w}(q)[S,S',w]_q,
\ee
where $C_v=\sum_{w\in\Sym_t} P_{v,w}(q)T_w$ is the Kazhdan-Lusztig
basis element of the Hecke algebra
$H_t(q^2)$, $[S,S',w]_q$ is the element of the 
basis $\CT_q$ (i.e. dangle) corresponding to the Brauer diagram
$[S,S',w]$, and all other notation is as in \S\ref{cell-br}.
Note that $P_{v,w}(q)\in\Z[q^{\pm 1}]\subset \CA_q$, so that 
$C^\lambda_{(S,\tau),(S',\tau')}(q)\in BMW_r(q)$.

Now for each element $(S,\tau)\in M(\lambda)$, the arguments
leading to \cite[Cor. 3.13]{X} describe how to associate to 
$(S,\tau)$ an $(r,t)$ dangle which we denote by $(S,\tau)_q$.
These form an $\CA_q$-basis of $W_q(\lambda)$, with the action
of $BMW_r(q)$ given by concatenation, using the relations
in \cite[Def. 2.5]{X} and the action of the Hecke algebra
$H_t(q^2)$ on its cell modules (which have basis $\{\tau\}$).
The next statement is a general result about cellular algebras,
adapted to our situation.

\begin{prop}\label{cellmodspec}
Let $\phi:\CA_q\lr R$ be a homomorphism of commutative rings
with $1$, and denote by $BMW_r^\phi$ the specialisation
$R\otimes_\phi BMW_r(q)$. Then
\begin{enumerate}
\item There is a natural bijection between the $\CA_q$-basis
$\{(S,\tau)_q\}$ of $W_q(\lambda)$ and an $R$-basis
of the specialised cell module $W^\phi(\lambda)$.
\item If $a\in BMW_r(q)$, the matrix of $1\otimes a\in BMW_r^\phi$
with respect to the basis in (1) is obtained by applying $\phi$
to the entries of the matrix of $a$.
\item The Gram matrix of the canonical form on $W^\phi(\lambda)$
is obtained from that of $W_q(\lambda)$ by applying $\phi$ 
to the entries of the latter.
\item If $W^\phi(\lambda)$ is simple, so is $W_q(\lambda)$.
\item We have 
$\rank_{\CA_q}L_q(\lambda)\geq \rank_{R}L^\phi(\lambda)$,
where $L_q(\lambda)$ is the simple head of $W_q(\lambda)$, etc.
\item For any pair $\mu\geq \lambda\in\Lambda(r)$, the
multiplicity $[W_q(\lambda):L_q(\mu)]\leq 
[W^\phi(\lambda):L^\phi(\mu)]$.
\end{enumerate}
\end{prop}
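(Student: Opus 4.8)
The plan is to obtain (1)--(3) as formal consequences of the fact that the entire cellular datum of $BMW_r(q)$ is defined over $\CA_q$: the tangle basis $\CT_q$, the cellular basis $\{C^\lambda_{(S,\tau),(S',\tau')}(q)\}$ of \eqnref{cellbasis}, the dangle bases $\{(S,\tau)_q\}$ of the cell modules, the structure constants governing the action of $BMW_r(q)$ on each $W_q(\lambda)$, and the Gram matrices of the invariant forms $\phi_\lambda$ all have entries in $\CA_q$, while $W_q(\lambda)$ is $\CA_q$-free on $\{(S,\tau)_q\}$. By the general construction of the cellular structure on a specialisation (\cite[\S\S 1--3]{GL96}, \cite[\S 3]{X}) one has $W^\phi(\lambda)=R\otimes_\phi W_q(\lambda)$, so $\{1\otimes(S,\tau)_q\}$ is an $R$-basis; this is (1). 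Since $1\otimes a$ acts on this basis through the scalars $\phi(c)$, $c\in\CA_q$ a structure constant, its matrix, and likewise the Gram matrix of the specialised form, is the entrywise $\phi$-image of the corresponding matrix over $\CA_q$; this is (2) and (3).

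For (4) and (5) (where I take $R$ to be a field), write $G_\lambda$ for the Gram matrix of $\phi_\lambda$ on $W_q(\lambda)$, so by (3) the Gram matrix of the form on $W^\phi(\lambda)$ is $\phi(G_\lambda)$. If $W^\phi(\lambda)$ is simple then its form is non-degenerate, so $\phi(\det G_\lambda)=\det\phi(G_\lambda)\neq 0$, whence $\det G_\lambda\neq 0$ in the integral domain $\CA_q$; this forces $G_\lambda\colon\CA_q^{\,w_\lambda}\to\CA_q^{\,w_\lambda}$ to be injective, so the radical $R_q(\lambda)$ of $\phi_\lambda$ vanishes and $W_q(\lambda)=L_q(\lambda)$ is simple (in particular $W_\CK(\lambda)$ is), giving (4). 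For (5), $\rank_{\CA_q}L_q(\lambda)=w_\lambda-\rank_{\CA_q}R_q(\lambda)=\rank_{\CA_q}G_\lambda$ and similarly $\rank_R L^\phi(\lambda)=\rank_R\phi(G_\lambda)$; since each $k\times k$ minor of $\phi(G_\lambda)$ is the $\phi$-image of the corresponding minor of $G_\lambda$, a non-vanishing minor of $\phi(G_\lambda)$ lifts to a non-vanishing minor of $G_\lambda$, so $\rank_R\phi(G_\lambda)\leq\rank_{\CA_q}G_\lambda$.

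Statement (6) --- that decomposition numbers can only increase under specialisation --- is the substantive point; here $[W_q(\lambda):L_q(\mu)]$ is read as the composition multiplicity of $W_\CK(\lambda)$ over $BMW_r(\CK)$. I would first reduce to the case $R=\kappa$, the residue field of $\mathfrak p:=\Ker\phi$ (or $R=F:=\operatorname{Frac}(\CA_q)$ when $\mathfrak p=0$), using that --- the cellular datum lying over $\CA_q$ --- decomposition numbers are stable under extension of the ground field (a standard fact for the cellular algebras in question, \cite{GL96}); this is immaterial for the two specialisations occurring in the paper, since for $\phi_q\colon\CA_q\hookrightarrow\CK$ statement (6) is a tautology, while for $\phi_1\colon q\mapsto 1$ one has $R=\C=\kappa$ already. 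Since $\CA_q$ is a localisation of the principal ideal domain $\C[q^{\pm 1}]$, the local ring $\CO:=(\CA_q)_{\mathfrak p}$ is (for $\mathfrak p\neq 0$) a discrete valuation ring with fraction field $F$ and residue field $\kappa$, and $\CO\otimes_{\CA_q}BMW_r(q)$ is a cellular $\CO$-algebra whose cell modules $W_\CO(\lambda)$ are free $\CO$-lattices with $F\otimes_\CO W_\CO(\lambda)=W_F(\lambda)$ and $\kappa\otimes_\CO W_\CO(\lambda)=W_\kappa(\lambda)$.

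It then remains to run the Brauer--Nesbitt argument in Grothendieck groups. The decomposition map $d\colon G_0(BMW_r(F))\to G_0(BMW_r(\kappa))$ sends $[W_F(\lambda)]$ to $[W_\kappa(\lambda)]$, since $W_\CO(\lambda)$ is an $\CO$-lattice interpolating them. Using the lattice $X_\mu:=\operatorname{image}\big(W_\CO(\mu)\lr W_F(\mu)\twoheadrightarrow L_F(\mu)\big)$ one gets $d\big([L_F(\mu)]\big)=[\kappa\otimes X_\mu]$, and $\kappa\otimes X_\mu$ is a non-zero quotient of $W_\kappa(\mu)$, hence has simple head $L_\kappa(\mu)$, occurring exactly once, and every composition factor among $\{L_\kappa(\nu)\mid\nu\geq\mu\}$ (cf. the assertion \ref{repsbr}(4) and \cite[\S 3]{X}); thus $d\big([L_F(\mu)]\big)=\sum_{\nu\geq\mu}d_{\mu\nu}[L_\kappa(\nu)]$ with $d_{\mu\mu}=1$ and $d_{\mu\nu}\in\Z_{\geq 0}$. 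Expanding $[W_\kappa(\lambda)]=d\big([W_F(\lambda)]\big)=\sum_\mu[W_F(\lambda):L_F(\mu)]\,d\big([L_F(\mu)]\big)$ and comparing the coefficient of $[L_\kappa(\mu)]$ with the direct expansion of $[W_\kappa(\lambda)]$ yields
$$[W_\kappa(\lambda):L_\kappa(\mu)]=\sum_{\mu'\leq\mu}[W_F(\lambda):L_F(\mu')]\,d_{\mu'\mu}\ \geq\ [W_F(\lambda):L_F(\mu)],$$
since $d_{\mu\mu}=1$ and the remaining summands are non-negative; this is (6). The main obstacles I anticipate are the two structural inputs: absolute irreducibility of the cellular simple modules (needed for the reduction to the residue field) and the unitriangularity $d_{\mu\mu}=1$, $d_{\mu\nu}=0$ unless $\nu\geq\mu$ of the decomposition matrix --- both resting on the non-degeneracy of the forms on the $L(\mu)$ and on the fact that every composition factor of $W(\lambda)$ is an $L(\mu)$ with $\mu\geq\lambda$.
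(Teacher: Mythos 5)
Your proposal is correct, and for parts (1)--(5) it follows essentially the same route as the paper: (1)--(3) are read off from the fact that the cellular datum (basis, structure constants, Gram matrix) lives over $\CA_q$, and (4)--(5) come from the Gram determinant and the fact that matrix rank cannot increase under a ring homomorphism.

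For part (6) you do something genuinely different in degree of elaboration, though not in the underlying idea. The paper disposes of (6) in one sentence: specialise a composition series of $W_q(\lambda)$ (via $R\otimes_\phi-$) to a chain in $W^\phi(\lambda)$, and note that the specialisation of (a lattice in) $L_q(\mu)$ contains $L^\phi(\mu)$ as a subquotient. You instead set up the full Brauer--Nesbitt machinery: localise at $\mathfrak p=\Ker\phi$ to get a DVR $\CO$, reduce to the residue field using absolute irreducibility of the cellular simples, define the decomposition map on Grothendieck groups, prove unitriangularity $d_{\mu\mu}=1$, $d_{\mu\nu}=0$ for $\nu\not\geq\mu$ via the image-lattice $X_\mu$, and then compare coefficients. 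What this buys you is a more robust and self-contained justification of a step that the paper treats as immediate; in particular it makes explicit why the specialisation of a lattice in $L_\CK(\mu)$ -- which a priori has nothing to do with $W_q(\mu)$ -- still has $L^\phi(\mu)$ as a composition factor (your $X_\mu$ is manifestly a quotient of $W_\CO(\mu)$, so its reduction is a quotient of $W_\kappa(\mu)$). The paper's version is shorter and adequate for its two specialisations $\phi_q$, $\phi_1$, but your treatment covers a general $\phi$ to a field and makes the unitriangularity visible; it is also where you correctly locate the two real structural inputs (non-degeneracy of the forms on the $L(\mu)$, and the ordering of composition factors of cell modules) that the paper leaves implicit.
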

\begin{proof}
The bijection of (1) arises from any $\CA_q$-basis $\{\beta\}$
of $W_q(\lambda)$, by taking $\beta\mapsto 1\otimes \beta$. 
Given this, the assertion (2) is clear, as is (3). If $\Delta(\lambda)$
is the determinant of the Gram matrix of $W_q(\lambda)$ 
(i.e. the discriminant), the discriminant 
$\Delta^\phi(\lambda)$ of $W^\phi(\lambda)$ is given by 
$\Delta^\phi(\lambda)=\phi(\Delta(\lambda))$. If $W^\phi(\lambda)$
is simple, then $\Delta^\phi(\lambda)\neq 0$, whence
$\Delta_q(\lambda)\neq 0$. This implies that
if $\phi_q$ is the inclusion of $\CA_q$ in $\CK$, 
then $W^{\phi_q}(\lambda)(=W_\CK(\lambda))$ is simple as 
$BMW_r(\CK)$ module. It follows that $W_q(\lambda)$ has no
non-trivial $BMW_r(q)$-submodules, whence (4). Finally, 
note that $\rank_{\CA_q}(L_q(\lambda))$ equals the rank of the 
Gram matrix of the form. Since this cannot increase on 
specialisation, (5) follows. To see (6), observe that any
composition series of $W_q(\lambda)$ specialises 
(under the functor $R\otimes_\phi-$) to a chain
of submodules of $W^\phi(\lambda)$. But by (3),
the specialisation of $L_q(\mu)$ has $L^\phi(\mu)$ as a subquotient,
from which (6) follows.
\end{proof}

\subsection{An element of the quantum kernel}\label{quantker}
We next consider some elements of $BMW_r(q)$ which will play
an important role in the remainder of this work, and which will be used 
to define the Temperley-Lieb analogue of the title. Let $f_i= -g_i
-(1-q^{-2})e_i +q^2$, and set
\begin{eqnarray} \label{defFq}
F_q= f_1 f_3.
\end{eqnarray}
We also define $e_{1 4}=g_3^{-1} g_1 e_2 g_1^{-1} g_3$ and $e_{1 2 3
4} = e_2 g_1 g_3^{-1} g_2 g_1^{-1} g_3$.

The next two results are quantum analogues of Proposition \ref{propsPhi}.

\begin{lem}\label{relatPhiq}
The following identities hold in $BMW_4(q)$ (and hence in
$BMW_r(q)$).
\begin{eqnarray}
&&f_i=\frac{(g_i-q^2)(g_i-q^{-4})}{q^{-2}+q^{-4}}\\ 
&&e_i f_i=0,\quad f_i^2 = (q^2+q^{-2})f_i, \quad i=1, 2, 3, \label{relatPhiq1}\\
&&e_2 F_q e_2 =\tilde{a}e_2 -d e_{1 2 3 4} + a e_2 e_{1 4},\label{relatPhiq2}\\
&&e_2 F_q e_2 e_{1 4}=e_{1 4} e_2 F_q e_2 =(q^2+q^{-2})^2 e_2 e_{1
4},
\label{relatPhiq3}\\
&&e_2 F_q e_{1 2 3 4} =  e_{1 2 3 4} F_q e_2 = -d e_2 + a e_{1 2 3
4} + q^{-4}\tilde{a}e_2 e_{1 4}, \label{relatPhiq4}
\end{eqnarray}
where
\[
a=1+(1-q^{-2})^2, \quad \tilde{a}= 1+(1-q^2)^2, \quad d=
(q-q^{-1})^2=q^2(a-1)=q^{-2}(\tilde a-1).
\]
\end{lem}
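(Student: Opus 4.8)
The plan is to verify the identities \eqref{relatPhiq1}--\eqref{relatPhiq4} by direct manipulation within $BMW_4(q)$, exploiting the de-looping relations \eqref{delooping} and the cubic relation \eqref{cubic} (in their specialised forms \eqref{braidgiq}, \eqref{extrarel}), together with the defining formula $f_i=-g_i-(1-q^{-2})e_i+q^2$ and $F_q=f_1f_3$. These are deformations of Proposition~\ref{propsPhi}, so the classical identities and their proofs serve as a template; the main task is to track the $q$-corrections carefully.

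First I would establish the preliminary identities \eqref{relatPhiq1}. The formula $f_i=\dfrac{(g_i-q^2)(g_i-q^{-4})}{q^{-2}+q^{-4}}$ follows by expanding the numerator, using the cubic relation \eqref{cubic} in the form \eqref{equadg} (i.e. $g_i^2=zg_i+1-yze_i$ with $y=q^{-4}$, $z=q^2-q^{-2}$) to reduce $g_i^2$, and matching with $-g_i-(1-q^{-2})e_i+q^2$; this is the quantum analogue of the relation $d_i=(g_i-q^2)(g_i-q^{-4})/\bigl((q^{-2}+q^2)(q^{-2}+q^{-4})\bigr)$ from \eqref{otheridemp}, so $f_i=(q^{-2}+q^2)d_i$ up to the normalisation already recorded in Lemma~\ref{integralidempotents}. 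From the idempotency of $d_i$ one gets $f_i^2=(q^2+q^{-2})f_i$ immediately. The identity $e_if_i=0$ is then $e_id_i=0$, which follows from $e_i+[3]_qd_i+[3]_qc_i=[3]_q$ (the identity $e_i[3]_q^{-1}+d_i+c_i=1$ of Lemma~\ref{integralidempotents}) together with $e_ic_i=0$ and $e_i^2=[3]_qe_i$ from \eqref{extrarel}; alternatively one multiplies $(g_i-q^2)(g_i-q^{-4})$ on the left by $e_i$ and uses $g_ie_i=q^{-4}e_i$ from \eqref{braidgiq}.

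Next I would prove the key relation \eqref{relatPhiq2}, which is the quantum replacement for $e_2Fe_2=e_2+e_2e_{1,4}$ in \eqref{eFe}. Writing $F_q=f_1f_3$ and expanding $f_1=-g_1-(1-q^{-2})e_1+q^2$, $f_3=-g_3-(1-q^{-2})e_3+q^2$, the product $e_2F_qe_2$ becomes a sum of terms of the form $e_2\,(\text{word in }g_1,e_1)\,(\text{word in }g_3,e_3)\,e_2$. Since $g_1$, $e_1$ commute with $g_3$, $e_3$, each such term is handled by the de-looping relations $e_2g_1^{\pm1}e_2=q^{\pm4}e_2$, $e_2g_3^{\pm1}e_2=q^{\pm4}e_2$, $e_2e_1e_2=e_2$, $e_2e_3e_2=e_2$ from \eqref{braidgiq}--\eqref{extrarel}, with the cross terms $e_2g_1g_3e_2$, $e_2e_1g_3e_2$, etc., producing the elements $e_{14}=g_3^{-1}g_1e_2g_1^{-1}g_3$ and $e_{1234}=e_2g_1g_3^{-1}g_2g_1^{-1}g_3$ after recognising the relevant sub-words. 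The coefficients $a=1+(1-q^{-2})^2$, $\tilde a=1+(1-q^2)^2$, $d=(q-q^{-1})^2$ arise exactly as the bookkeeping of these expansions; the stated relations $d=q^2(a-1)=q^{-2}(\tilde a-1)$ are trivial algebraic checks on the definitions. The remaining identities \eqref{relatPhiq3} and \eqref{relatPhiq4} follow by multiplying \eqref{relatPhiq2} on the appropriate side by $e_{14}$ or by manipulating $e_{1234}$, again using the de-looping relations and the fact (a short braid-group computation) that $e_{14}$ and $e_{1234}$ satisfy the expected analogues of the classical relations $e_{1,4}^2=3e_{1,4}$, $e_2e_{1,4}$-type identities; concretely $e_{14}^2=[3]_qe_{14}$ and $e_{14}e_2e_{14}=\cdots$, which one derives from \eqref{braidgiq} since $e_{14}$ is conjugate to $e_2$ by the braid element $g_1^{-1}g_3$ (up to the required normalisation).

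The main obstacle will be the bookkeeping in \eqref{relatPhiq2} and especially \eqref{relatPhiq4}: unlike the classical case, where $e_2Fe_2$ collapses almost immediately, here the non-trivial $q$-factors in the de-looping relations mean many more terms survive, and one must correctly identify which combinations of $g_1^{\pm1},g_3^{\pm1},g_2,e_1,e_3$ reassemble into the named elements $e_{14}$ and $e_{1234}$ rather than into genuinely new diagrams. I would organise this by first reducing every word modulo the two-sided structure (pushing $e_2$'s through using de-looping), then grouping by the resulting diagram type, and finally checking the scalar coefficients against $a,\tilde a,d$ --- a computation that is routine but lengthy, and is the natural place to double-check by specialising $q\to1$ and comparing with \eqref{eFe} and Proposition~\ref{propsPhi}.
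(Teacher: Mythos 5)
Your proposal follows the paper's own (very terse) proof: the key step, observing $f_i=(q^2+q^{-2})d_i$ with $d_i$ the idempotent of \eqref{otheridemp}, is exactly what the paper records, and the remaining identities are derived by the same expansion/de-looping bookkeeping that the paper describes as "straightforward consequences" (or, alternatively, by composing tangle diagrams, which you also mention). You have fleshed out the same route rather than found a different one.
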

The first relation follows easily from the relations
(\ref{extrarel}), and the others are straightforward consequences.
Note that $f_i=(q^2+q^{-2})d_i$, where $d_i$ is the idempotent of
Lemma \ref{proj-gen}.
Alternatively, one may use the representation of elements of the
BMW algebra by tangle diagrams, and the multiplication by
composition of diagrams, to verify the above statements.

Define the following element of $BMW_4(q)$:
\begin{eqnarray}\label{defPhiq}
\Phi_q &=&aF_q e_2 F_q - b F_q - c F_qe_2e_{14}F_q +d F_q e_{1 2 3 4} F_q,
\end{eqnarray}
where
\begin{eqnarray*}
\begin{aligned}
b&=1+(1-q^{2})^2+(1-q^{-2})^2,\\
c&= \frac{1 +(2+q^{-2})(1-q^{-2})^2 + (1+q^2)
(1-q^{-2})^4}{([3]_q-1)^2}.
\end{aligned}
\end{eqnarray*}

\begin{prop}\label{propsPhiq}
The elements $F_q, \Phi_q$ have the following properties:
\begin{enumerate}
\item $F_q^2= (q^2+q^{-2})^2 F_q$. \label{PropsPhiq1}
\item $e_i\Phi_q=\Phi_q e_i=0$ for $i=1,2,3$. \label{PropsPhiq2}
\item $\Phi_q^2 = -(q^2+q^{-2})^2 (1+(1-q^{2})^2+(1-q^{-2})^2 )
\Phi_q$. \label{PropsPhiq3}
\item $\Phi_q$ acts as $0$ on $V_q^{\otimes 4}$. \label{PropsPhiq4}
\end{enumerate}
\end{prop}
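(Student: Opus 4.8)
The plan is to prove the four statements essentially as quantum analogues of the proofs of Proposition~\ref{propsPhi}, using Lemma~\ref{relatPhiq} as the computational engine and then invoking the specialisation machinery of \S\ref{speccell} together with Lemma~\ref{e2iso}/Corollary~\ref{e2classical} for the final (vanishing) part. For statement~(1), I would simply expand $F_q^2 = f_1 f_3 f_1 f_3$; since $f_1$ and $f_3$ commute (they involve only $g_1,e_1$ and $g_3,e_3$, which commute by the braid and disjointness relations in (\ref{braidgiq})), this is $f_1^2 f_3^2$, and the relation $f_i^2 = (q^2+q^{-2}) f_i$ from (\ref{relatPhiq1}) gives $F_q^2 = (q^2+q^{-2})^2 F_q$ immediately.

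For statement~(2), note first that $e_1 F_q = e_1 f_1 f_3 = (e_1 f_1) f_3 = 0$ by (\ref{relatPhiq1}), and likewise $e_3 F_q = 0$; by symmetry $F_q e_1 = F_q e_3 = 0$ as well. Since every term of $\Phi_q$ in (\ref{defPhiq}) begins and ends with a factor $F_q$, this already kills $e_1\Phi_q$, $e_3\Phi_q$, $\Phi_q e_1$, $\Phi_q e_3$. The substantive point is $e_2\Phi_q = 0$ (and $\Phi_q e_2 = 0$): here one substitutes the four identities (\ref{relatPhiq2})--(\ref{relatPhiq4}) into $e_2\Phi_q = a\, e_2 F_q e_2 F_q - b\, e_2 F_q - c\, e_2 F_q e_2 e_{14} F_q + d\, e_2 F_q e_{1234} F_q$, and checks that the coefficients $a,b,c,d$ (and $\tilde a$) have been chosen precisely so that everything cancels. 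This is the step where the specific values of $b$ and $c$ in the statement are forced; it is a finite but genuinely intricate computation, and I expect this to be the main obstacle — keeping track of the terms $e_2$, $e_2 e_{14}$, $e_{1234}$ and the coefficients through the products $F_q e_2 F_q$, $F_q e_2 e_{14} F_q$, $F_q e_{1234} F_q$ after left-multiplying by $e_2$. One should also record, as a by-product needed for~(3), that $e_2 F_q \Phi_q = 0$.

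For statement~(3), the argument mirrors the classical one: from $F_q^2 = (q^2+q^{-2})^2 F_q$ and $e_i F_q = 0$ one gets $F_q \Phi_q = (q^2+q^{-2})^2\,\bigl(a F_q e_2 F_q - b F_q - c F_q e_2 e_{14} F_q + d F_q e_{1234} F_q\bigr) = (q^2+q^{-2})^2\,\Phi_q$ after using $e_i F_q = 0$ to simplify (more directly, $F_q \Phi_q = (q^2+q^{-2})^2\Phi_q$ since $\Phi_q \in F_q\cdot BMW_4(q)$). Then $\Phi_q^2 = \bigl(a F_q e_2 F_q - b F_q - c F_q e_2 e_{14} F_q + d F_q e_{1234} F_q\bigr)\Phi_q$; the terms containing an interior $e_2$ vanish because $e_2\Phi_q = 0$ (and $e_2 e_{14}\Phi_q = 0$, $e_{1234}$ contains an $e_2$ so $e_{1234}\Phi_q$ reduces similarly — one checks $e_{1234}\Phi_q = e_2 g_1 g_3^{-1} g_2 g_1^{-1} g_3 \Phi_q$, but the $g$'s commute past nothing useful, so instead one uses $\Phi_q e_2 = 0$ by writing $\Phi_q^2$ and pushing the second $\Phi_q$ to absorb, or simpler: $\Phi_q^2 = \Phi_q\cdot\Phi_q$ and $\Phi_q = F_q X$ with $e_2 X$ handled as above). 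The cleanest route: $\Phi_q^2 = \Phi_q F_q X = (q^2+q^{-2})^2 \Phi_q X$, and $\Phi_q X$ collapses via $\Phi_q e_2 = 0$ to $-b\,\Phi_q$, yielding $\Phi_q^2 = -b(q^2+q^{-2})^2\Phi_q$ with $b = 1+(1-q^2)^2+(1-q^{-2})^2$ as claimed.

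For statement~(4), I would argue exactly as in Proposition~\ref{propsPhi}(3). The idempotent $\tfrac{1}{q^2+q^{-2}}f_i = d_i$ projects $V_q^{\otimes 2}$ onto $L(2)_q$ in the $(i,i+1)$ factors (by Lemma~\ref{relatPhiq}, since $f_i = (q^2+q^{-2})d_i$ and $d_i$ is the idempotent cutting out the middle summand $V(2)_q$ of $V_q(2)\otimes V_q(2)$ from (\ref{otheridemp})). Hence $F_q$ acts on $V_q^{\otimes 4}$ with image contained in $L(2)_q\otimes L(2)_q$, so $\Phi_q(V_q^{\otimes 4})\subseteq L(2)_q\otimes L(2)_q$. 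By Lemma~\ref{e2iso}, $e_2$ acts injectively on $L(2)_q\otimes L(2)_q$; since $e_2\Phi_q = 0$ (statement~(2)), it follows that $\Phi_q$ annihilates $L(2)_q\otimes L(2)_q$ and therefore acts as $0$ on all of $V_q^{\otimes 4}$. This completes the proof, modulo the verification in~(2), which I reiterate is the computational heart of the argument.
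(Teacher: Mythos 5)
Your argument is correct and follows essentially the same route as the paper: part (1) from $f_i^2=(q^2+q^{-2})f_i$, part (2) by substituting (\ref{relatPhiq1})--(\ref{relatPhiq4}) into $e_2\Phi_q$, part (3) by killing all but the $-bF_q$ term of the second factor of $\Phi_q^2$ via $\Phi_q e_2=0$ (equivalently, the paper uses $\Phi_q F_q e_2=0$), and part (4) via the inclusion $\Phi_q(V_q^{\otimes 4})\subseteq L(2)_q\otimes L(2)_q$ together with the injectivity of $e_2$ there (Lemma~\ref{e2iso}) and $e_2\Phi_q=0$. The only small slip is the phrase in (4) that ``$\Phi_q$ annihilates $L(2)_q\otimes L(2)_q$'' — what is actually concluded is $\Phi_q(V_q^{\otimes 4})=0$, since that image sits inside $L(2)_q\otimes L(2)_q$ and is killed by the injective map $e_2$; but the intended logic is clearly right.
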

\begin{proof}
Part (\ref{PropsPhiq1}) immediately follows from the second relation
in \eqref{relatPhiq1}.

The fact that $e_1 \Phi_q = e_3 \Phi_q =0$ follows from the first
relation of \eqref{relatPhiq1} in Lemma \ref{relatPhiq}. Now
\[
e_2\Phi_q =a e_2 F_q e_2 F_q -b e_2 F_q-c e_2 F_q e_2 e_{1 4} F_q +d
e_2 F_q e_{1 2 3 4} F_q.
\]
Using the relations \eqref{relatPhiq2}, \eqref{relatPhiq3} and
\eqref{relatPhiq4}, we readily obtain $e_2\Phi_q=0$.
It can be similarly
shown that $\Phi_q e_i=0$ for $i=1, 2, 3$. Thus  by part
(\ref{PropsPhiq2}), we see $\Phi_q F_q e_2 =0$, and therefore that
$\Phi_q^2 = - b \Phi_q F_q=-(q^2+q^{-2})^2 b \Phi_q$.

The proof of part (\ref{PropsPhiq4}) proceeds in much the same way
as in the classical case. Note that $\Phi_q(V_q^{\otimes 4})\subset
L(2)_q\otimes L(2)_q$. Thus by Lemma \ref{e2iso},
$\Phi_q(V_q^{\otimes 4})\cong e_2\Phi_q(V_q^{\otimes 4})$ as
$\cU_q(\fsl_2)$-modules. Since $e_2\Phi_q=0$ by part (2), the proof
is complete.
\end{proof}

\subsection{A regular form of quantum $\mathfrak{sl}_2$}\label{regularform}

In this subsection we consider the quantised universal enveloping
algebra of $\mathfrak{sl}_2$ over the ring
$\CA_q$ and its representations. By ``regular form'' we shall understand
an $\CA_q$-lattice in a $\CK$-representation of $\cU_q(\fsl_2)$.
Denote by $\UK$ the $\CA_q$-algebra generated by $e, f,
k^{\pm 1}$ and $h:=\frac{k-k^{-1}}{q-q^{-1}}$, subject to the usual
relations, and call it the {\em regular form} of
$\cU_q(\mathfrak{sl}_2)$. Recall (Definition \ref{spec}) that
we have homomorphisms $\phi_1$ and $\phi_q$ from
$\CA_q$ to $\C, \CK$ respectively; the resulting specialisation
$\C\otimes_{\phi_1}\UK$ at $\phi_1$ is isomorphic to the universal
enveloping algebra of $\cU(\mathfrak{sl}_2)$ of $\mathfrak{sl}_2$
with
\[1\otimes e \mapsto \begin{pmatrix}0&1\\
0&0\end{pmatrix}, \quad 1\otimes f \mapsto \begin{pmatrix}0&0\\
1&0\end{pmatrix}, \quad 1\otimes h  \mapsto \begin{pmatrix}1&0\\
0&-1\end{pmatrix}, \quad 1\otimes k \mapsto \begin{pmatrix}1&0\\
0&1\end{pmatrix}.\]

The $\CA_q$-span $V^{reg}_q(2)$ of the vectors $v_0, v_{\pm 1}$ (see
Section \ref{qnotation}) forms a $\UK$-module, which is an
$\CA_q$-lattice in
$V_q(2)$.  Denote the $r$-th tensor power of $V^{reg}_q(2)$ over
$\CA_q$ by $V^{reg}_q(2)^{\otimes r}$; this has an $\CA_q$-basis consisting
of the elements $v_{i_1, i_2, \dots, i_r}$. Then
\begin{eqnarray}
\begin{aligned}
&\CK\otimes_{\phi_q}V^{reg}_q(2)^{\otimes r}= V_q(2)^{\otimes r},
\quad
\text{as $\cU_q(\fsl_2)$-module}; \\
&\C\otimes_{\phi_1}V^{reg}_q(2)^{\otimes r}\cong V(2)^{\otimes r},
\quad \text{as $\mathfrak{sl}_2$-module}.
\end{aligned}
\end{eqnarray}

\begin{rem}\label{tensor-basis}
The vectors $1\otimes v_{i_1, i_2, \dots, i_r}$ form a basis for
$\C\otimes_{\CA_q}V^{reg}_q(2)^{\otimes r}$. It follows that
$1\otimes v\in \C\otimes_{\phi_1}V^{reg}_q(2)^{\otimes r}$ is zero
if and only if $v\in (q-1)V^{reg}_q(2)^{\otimes r}$.
\end{rem}

Denote by $E_q^{reg}(r)$ the $\CA_q$ algebra 
$\End_{\UK}(V^{reg}_q(2)^{\otimes r})$. 
Recall (Theorem \ref{surjbmwend}) that we have the surjection 
$\eta_q:BMW_r(\CK)\lr E_q(r)=\CK\otimes_{\phi_q}E_q^{reg}(r)$. The next 
result shows that $\eta_q$ preserves the $\CA_q$-structures.


\begin{lem} \label{Endq-reg} We have 
$\eta_q(BMW_r(q))\subseteq E_q^{reg}(r)$.
In particular, if $[3]_q\inv e_i, d_i$ and $c_i$
are the idempotents of Theorem \ref{surjbmwend}, then 
$\eta_q(g_i)$, $\eta_q(\frac{e_i}{[3]_q})$, $\eta_q(d_i)$ and
$\eta_q(c_i)$ belong to $E_q^{reg}(r)$ for all $i$.
\end{lem}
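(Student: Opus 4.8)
The plan is to show that $\eta_q$ maps the integral basis $\CT_q$ of $BMW_r(q)$ (the dangle diagrams of \cite[p.~285]{X}, cf.~Lemma \ref{bmwspec}(1)) into the integral form $E_q^{reg}(r)=\End_{\UK}(V^{reg}_q(2)^{\otimes r})$. Since $\eta_q(BMW_r(\CK))=E_q(r)$ and $\eta_q$ takes $g_i\mapsto R_i$ and $e_i\mapsto[3]_qP_i(0)$, it suffices to verify that each of the algebra generators $g_i^{\pm1}$ and $e_i$ of $BMW_r(q)$ acts on the lattice $V^{reg}_q(2)^{\otimes r}$ by an $\CA_q$-endomorphism, i.e.~that $R_i^{\pm 1}$ and $[3]_qP_i(0)$ preserve the $\CA_q$-span of the vectors $v_{i_1,\dots,i_r}$; the full image then lies in $E_q^{reg}(r)$ because the latter is an $\CA_q$-subalgebra of $E_q(r)$.

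First I would reduce to the case $r=2$ by the usual tensor-leg argument: $R_i$ and $P_i(0)$ act as $\check R$ and $P(0)$ on the $(i,i+1)$ tensor factors and as the identity elsewhere, and the basis $\{v_{i_1,\dots,i_r}\}$ is the tensor product of the local bases $\{v_0,v_{\pm1}\}$, so it is enough to check that $\check R^{\pm 1}$ and $[3]_qP(0)$ preserve $V^{reg}_q(2)^{\otimes 2}=\bigoplus_{i,j}\CA_q\, v_{i,j}$. For $[3]_qP(0)$ this is immediate from Lemma \ref{e-action}, which gives the action of $e_1=[3]_qP_1(0)$ on all the $v_{i,j}$ with coefficients in $\C[q^{\pm 1}]\subset\CA_q$. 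For $\check R=R_1$ one uses Proposition \ref{projections}: on $V_q(2)^{\otimes 2}$, $\check R$ acts as $q^{-4}$, $-q^{-2}$, $q^2$ on $L(0)_q$, $L(2)_q$, $L(4)_q$ respectively (Lemma \ref{bases}), and one must express $\check R$ as an $\CA_q$-linear combination of the projections $P(2\ell)$, or, more efficiently, write $\check R$ as a polynomial in $g_1,e_1$ via the relation \eqref{ei}, namely $g_1=\check R$ and $e_1[3]_q^{-1}=(g_1-q^2)(g_1+q^{-2})/((q^{-4}-q^2)(q^{-4}+q^{-2}))$ — this shows directly that $g_1$ preserves the lattice once $e_1$ does, together with the explicit action of $g_1$ on the weight-$\pm 4$, $\pm 2$, $0$ spaces. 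For $g_1^{-1}$ I would use the Kauffman relation $g_i^{-1}=g_i-(q^2-q^{-2})(1-e_i)$ from \eqref{braidgiq}, so that $g_1^{-1}$ is again an $\CA_q$-linear combination of $g_1$, $e_1$ and $1$, hence preserves the lattice.

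The final sentence of the lemma then follows at once: $\eta_q(\tfrac{e_i}{[3]_q})$, $\eta_q(d_i)$, $\eta_q(c_i)$ are, by Lemma \ref{integralidempotents}, the images of elements already lying in $BMW_r(q)$, so they lie in $\eta_q(BMW_r(q))\subseteq E_q^{reg}(r)$; and $\eta_q(g_i)=R_i$ is the image of a generator. Alternatively, one may verify this directly: $\eta_q(\tfrac{e_i}{[3]_q})=P_i(0)$, which by Lemma \ref{bases} is the projection with matrix entries in $\CA_q$ (the relevant denominators $[2]_q$, $[3]_q$, $q^2+q^{-2}$ are invertible in $\CA_q$ by construction of $\CS$), and similarly for $d_i$, $c_i$ using $d_i+c_i+\tfrac{e_i}{[3]_q}=1$.

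The only mild obstacle is the bookkeeping of denominators: one must check that every scalar appearing in the action of $g_i$, $g_i^{-1}$, $e_i$ on the $v$-basis lies in $\CA_q=\C[q^{\pm1},[2]_q^{-1},[3]_q^{-1},(q^2+q^{-2})^{-1}]$. For $g_i$, $g_i^{-1}$, $e_i$ themselves the coefficients are in fact in $\C[q^{\pm 1}]$ (this is clear from \eqref{braidgiq} and Lemma \ref{e-action}), so no localisation is even needed for the generators; the localisation is only used when one wants the idempotents $P_i(0)=\eta_q(\tfrac{e_i}{[3]_q})$ etc.~to lie in $E_q^{reg}(r)$, and there the required invertibility is built into the definition of $\CA_q$. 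Thus the proof is a short verification with no real difficulty.
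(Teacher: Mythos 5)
Your overall framework is right (reduce to generators, reduce to $i=1$, and use that $E_q^{reg}(r)$ is an $\CA_q$-subalgebra), and the treatment of $e_i$ via Lemma~\ref{e-action} is exactly what the paper does. The problem is that the one genuinely non-trivial step---showing that $\eta_q(g_1)=\check R$ preserves the lattice $V^{reg}_q(2)^{\otimes 2}$---is never actually established.

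Two of your justifications for this step do not work. The claim that one can ``write $\check R$ as a polynomial in $g_1,e_1$ via the relation~\eqref{ei}'' and that ``this shows directly that $g_1$ preserves the lattice once $e_1$ does'' is circular: \eqref{ei} expresses $e_1/[3]_q$ as a polynomial in $g_1$, not the other way round, so knowing $e_1$ preserves the lattice says nothing about $g_1$. Likewise, the assertion in your last paragraph that ``for $g_i$, $g_i^{-1}$, $e_i$ themselves the coefficients are in fact in $\C[q^{\pm 1}]$ (this is clear from~\eqref{braidgiq} and Lemma~\ref{e-action})'' is not justified by those references: \eqref{braidgiq} contains only algebraic relations among $g_i,e_i$, and Lemma~\ref{e-action} gives the matrix of $e_1$ only. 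Neither gives you the matrix of $g_1$ on the $v_{i,j}$. Your first suggestion---express $\check R$ as an $\CA_q$-linear combination of the projections $P(0),P(2),P(4)$---is the correct idea, but it presupposes that the projections themselves are $\CA_q$-endomorphisms of the lattice, and that is precisely what needs to be checked: the bases of $L(2\ell)_q$ in Lemma~\ref{bases} are integral, but inverting the change-of-basis matrix could a priori introduce denominators outside $\CA_q$.

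The paper closes this gap by an explicit calculation: it writes down the matrix of $d_1=P(2)$ on the basis $\{v_{k,l}\}$ and observes that the entries (involving the factor $(q^{-2}+q^2)^{-1}=([3]_q-1)^{-1}$) lie in $\CA_q$; then $c_1=1-e_1/[3]_q-d_1$ and $g_1=q^{-4}\,e_1/[3]_q-q^{-2}\,d_1+q^2\,c_1$ give the remaining cases. To repair your argument you would either have to carry out this computation for $d_1$ (or, equivalently, compute $\check R$ directly, say from the explicit $R$-matrix formula $\check R=\check K\bigl(1\otimes 1+(q-q^{-1})e\otimes f+\tfrac{(q-q^{-1})^2}{[2]_q}e^2\otimes f^2\bigr)$ on $V^{reg}_q(2)^{\otimes 2}$ and check the coefficients), or supply some other genuine verification; the citations you give do not provide one.
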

\begin{proof}
The formul\ae\; in Lemma \ref{e-action} show explicitly that 
$\eta_q(\frac{e_i}{[3]_q})\in E_q^{reg}(r)$. A similar computation
shows that $\eta_q(d_i)\in E_q^{reg}(r)$, as follows. 
Evidently it suffices to treat the case $i=1$.
Write $\eta_q(d_1)(v_{k,l}):=x_{k,l}$; clearly we need only show that 
$x_{k,l}\in V_q^{reg}(2)^{\otimes 2}$ for $k,l=0,\pm 1$. But one verifies 
easily that the following explicit {formul\ae} describe the action
of $d_i$. Write $u_{-1}=q^{-2}v_{-1,0}-v_{0,-1},\;u_0=
-q^{-2}v_{-1,1}+(1-q^{-2})v_{0,0}+q^{-2}v_{1,-1}$ and 
$u_1=q^{-2}v_{0,1}-v_{1,0}$, and observe that $u_i\in V^{reg}_q(2)^{\otimes 2}$
for $i=0,\pm 1$. Then $x_{1,1}=x_{-1,-1}=0$, $x_{0,1}=\frac{1}{q^{-2}+q^2}u_1$,
$x_{0,1}=-q^2x_{1,0}$, $x_{0,0}=\frac{1-q^{-2}}{q^{-2}+q^2}u_0$,
$x_{-1,1}=-x_{1,-1}=\frac{1}{q^{-2}+q^2}u_0$, $x_{0,-1}=\frac{1}{q^{-2}+q^2}u_{-1}$,
and $x_{-1,0}=-q^2x_{0,-1}$. 

This shows that $\eta_q(d_i)\in E_q^{reg}(r)$, and since 
$\eta_q(\frac{e_i}{[3]_q})+\eta_q(d_i)+\eta_q(c_i)=1$,
it follows that
$\eta_q(c_i)\in E_q^{reg}(r)$. But $\eta_q(g_i)=
q^{-4}\eta_q(\frac{e_i}{[3]_q})-q^{-2}\eta_q(d_i)+q^2\eta_q(c_i)$,
whence the result.
\end{proof}

As a $\cU_q(\mathfrak{sl}_2)$-module, 
$V_q(2)^{\otimes r}\cong \CK\otimes_{\phi_q}
V^{reg}_q(2)^{\otimes r}$ 
is the direct sum of isotypic
components $I_q(2l)$, where every
irreducible $\cU_q(\mathfrak{sl}_2)$-submodule of $I_q(2l)$ 
has highest weight $2l$. It 
follows from the $\cU_q(\mathfrak{sl}_2)$ case of Theorem 8.5 in
\cite{LZ} that $I_q(2l)$ is an irreducible 
$\cU_q(\mathfrak{sl}_2)\otimes_{\CK} BMW_r(\CK)$-submodule 
of $V_q(2)^{\otimes r}$.

\begin{lem}\label{UxBr-irrep}
\begin{enumerate}
\item $I^{reg}_q(2l):=I_q(2l)\cap V^{reg}_q(2)^{\otimes r}$ 
is a $BMW_r(q)\otimes_{\CA_q} \UK$-submodule of $V^{reg}_q(2)^{\otimes r}$.
\item The specialisation $I(2l):=\C\otimes_{\phi_1}I^{reg}_q(2l)$ of
$I^{reg}_q(2l)$ is isomorphic as a $\cU(\mathfrak{sl}_2)$-module to
the isotypic component of $V(2)^{\otimes r}$ with highest weight $2l$.
\item $I(2l)$ is an irreducible $B_r(3)\otimes\cU(\mathfrak{sl}_2)$
-submodule of $V(2)^{\otimes r}$. 
.
\end{enumerate}
\end{lem}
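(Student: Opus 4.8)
The plan is to treat the three parts in turn, using throughout that $\CA_q$ is a principal ideal domain (a localisation of $\C[q,q^{-1}]$), so that a finitely generated torsion-free $\CA_q$-module is free and a submodule of a free module with torsion-free quotient is a direct summand; consequently specialisation along $\phi_1$ is exact on such submodules and preserves their ranks. For part (1) I would first note that $V^{reg}_q(2)^{\otimes r}$ is a module over $BMW_r(q)\otimes_{\CA_q}\UK$: it is a $\UK$-module by construction, and by Lemma \ref{Endq-reg} we have $\eta_q(BMW_r(q))\subseteq E_q^{reg}(r)=\End_{\UK}(V^{reg}_q(2)^{\otimes r})$, so $BMW_r(q)$ acts through $\UK$-endomorphisms, the two actions commuting. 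Since $\UK\subseteq\cU_q(\fsl_2)$ and $BMW_r(q)\subseteq BMW_r(\CK)$, and $I_q(2l)$ is by hypothesis a $\cU_q(\fsl_2)\otimes_{\CK} BMW_r(\CK)$-submodule of $V_q(2)^{\otimes r}$, the intersection $I^{reg}_q(2l)=I_q(2l)\cap V^{reg}_q(2)^{\otimes r}$ is then stable under both $BMW_r(q)$ and $\UK$, hence is a $BMW_r(q)\otimes_{\CA_q}\UK$-submodule. This part is essentially formal.

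For part (2), I would write $M=V^{reg}_q(2)^{\otimes r}$ and, for each even $\mu$, let $M_\mu$ be the $\CA_q$-span of the basis vectors $v_{i_1,\dots,i_r}$ with $2(i_1+\dots+i_r)=\mu$. Then $M=\oplus_\mu M_\mu$, each $M_\mu$ is free, and $1\otimes h$ acts on $\C\otimes_{\phi_1}M_\mu$ as the scalar $\mu$ (because $h$ acts on $M_\mu$ as $[\mu]_q$ and $\phi_1([\mu]_q)=\mu$). Since $I_q(2l)$ is a weight submodule of $V_q(2)^{\otimes r}=\CK\otimes_{\phi_q}M$, I would deduce $I^{reg}_q(2l)=\oplus_\mu\bigl(I^{reg}_q(2l)\cap M_\mu\bigr)$, each summand being a direct summand of $M_\mu$ of rank $\dim_\CK I_q(2l)_\mu$. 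Specialising then yields an embedding $I(2l):=\C\otimes_{\phi_1}I^{reg}_q(2l)\hookrightarrow\C\otimes_{\phi_1}M=V(2)^{\otimes r}$ of $\cU(\fsl_2)$-modules whose $\mu$-weight space has dimension $\dim_\CK I_q(2l)_\mu$ for all $\mu$; thus $I(2l)$ has the same character as $I_q(2l)$, namely $m_2^r(2l)$ times the character of $V(2l)$. Since finite dimensional $\fsl_2$-modules are semisimple and determined by their characters, $I(2l)\cong V(2l)^{\oplus m_2^r(2l)}$, which is the highest weight $2l$ isotypic component of $V(2)^{\otimes r}$; and since any submodule of $V(2)^{\otimes r}$ that is a sum of copies of $V(2l)$ lies in that component, a dimension count shows the embedding identifies $I(2l)$ with it.

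For part (3) I would combine part (1) with Lemma \ref{bmwspec}(2): $I(2l)$ is a module over $\C\otimes_{\phi_1}\bigl(BMW_r(q)\otimes_{\CA_q}\UK\bigr)=B_r(3)\otimes\cU(\fsl_2)$, the actions being those of $B_r(3)$ (via $\eta$) and of $\cU(\fsl_2)$ on $V(2)^{\otimes r}$. Writing $I(2l)=V(2l)\otimes U$ as $\cU(\fsl_2)$-module, with $U$ the multiplicity space of dimension $m:=m_2^r(2l)$, irreducibility of $V(2l)$ forces the commuting $B_r(3)$-action to be $1\otimes\rho$ for a homomorphism $\rho:B_r(3)\lr\End_\C(U)=M_m(\C)$, and every $B_r(3)\otimes\cU(\fsl_2)$-submodule of $I(2l)$ is $V(2l)\otimes U'$ for a $B_r(3)$-submodule $U'\subseteq U$. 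Now $\rho$ factors as the surjection $\eta:B_r(3)\twoheadrightarrow E=\End_{\fsl_2}(V(2)^{\otimes r})$ followed by the (surjective) projection of $E=\bigoplus_{l'}\End_{\fsl_2}(I(2l'))$ onto the summand $\End_{\fsl_2}(I(2l))\cong M_m(\C)$; hence $\rho$ is surjective, $U$ is the simple $M_m(\C)$-module, and $U$ has no proper nonzero $B_r(3)$-submodule. Therefore $I(2l)$ is an irreducible $B_r(3)\otimes\cU(\fsl_2)$-submodule of $V(2)^{\otimes r}$.

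The step I expect to need the most care is part (2): one must verify that each $I^{reg}_q(2l)\cap M_\mu$ is saturated in $M_\mu$ (so that its rank survives passage to $\C\otimes_{\phi_1}$) and that the weight grading itself survives specialisation — which it does precisely because weights are detected by the eigenvalues of $1\otimes h$ and not those of $1\otimes k$ (which all collapse to $1$). Once part (2) is in place, part (3) is essentially immediate, given the already-established surjectivity of the classical map $\eta:B_r(3)\lr E$.
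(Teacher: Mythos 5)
Your part (1) is the same formal argument as the paper's. Parts (2) and (3) are correct but proved by a genuinely different route. The paper dispatches both at once by invoking the $\fsl_2$ case of Theorem 3.13 of [LZ], which asserts that the $\cU(\fsl_2)$-isotypical component of $V(2)^{\otimes r}$ of highest weight $2l$ is the \emph{unique irreducible} $\cU(\fsl_2)\otimes B_r(3)$-submodule with that highest weight. Combined with Remark~\ref{tensor-basis} (which gives $I(2l)\neq 0$) and part (1) (which gives that $I(2l)$ is a $\cU(\fsl_2)\otimes B_r(3)$-submodule all of whose $\fsl_2$-constituents have highest weight $2l$, hence lies in that isotypical component), uniqueness forces $I(2l)$ to equal the whole component and to be irreducible. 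You instead reprove both facts from scratch: for (2) you decompose $V^{reg}_q(2)^{\otimes r}$ into $h$-weight lattices $M_\mu$, show each $I^{reg}_q(2l)\cap M_\mu$ is a saturated (hence free) direct summand whose rank is preserved under $\phi_1$, and deduce that $I(2l)$ has the same $\fsl_2$-character as $I_q(2l)$; for (3) you identify $I(2l)\cong V(2l)\otimes U$ and show the $B_r(3)$-action on the multiplicity space $U$ surjects onto $\End_\C(U)$, using the already-established surjectivity of $\eta:B_r(3)\lr E$ and the isotypical decomposition of $E$. Both arguments are sound. What the paper's route buys is brevity, at the cost of an external citation; what yours buys is that it makes explicit the saturation argument that the paper leaves implicit behind the phrase ``is isomorphic to some submodule,'' and it replaces the appeal to the abstract uniqueness statement of [LZ, Thm.~3.13] with the more concrete double-centraliser mechanism already present in the paper (surjectivity of $\eta$). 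Note, though, that your proof of (3) is not entirely independent of [LZ]: the surjectivity of $\eta$ that you lean on is itself a consequence of the strongly-multiplicity-free machinery of [LZ], so you have traded one [LZ] dependence for another rather than eliminated it.
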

\begin{proof}
By Lemma \ref{Endq-reg}, $I^{reg}_q(2l)$ is stable under the action
of $BMW_r(q)$. Since it is evidently a $\UK$-module and the
$\UK$ action commutes with the action of $BMW_r(q)$, part (1) follows.

In view of Remark \ref{tensor-basis}, $I(2l)$ is a non-trivial
subspace of $\C\otimes_{\phi_1}V^{reg}_q(2)^{\otimes r}$. By part
(1), $I(2l)$ is isomorphic to some $\cU(\mathfrak{sl}_2)\otimes
B_r(3)$ submodule of $V(2)^{\otimes r}$ whose $\mathfrak{sl}_2$-
submodules all have 
highest weight $2l$. The $\mathfrak{sl}_2$ case of Theorem 3.13 in
\cite{LZ} states that the $\cU(\mathfrak{sl}_2)$ isotypical
component of $V^{\otimes r}$ with highest weight $2l$ is the unique
irreducible $\cU(\mathfrak{sl}_2)\otimes B_r(3)$ submodule with this
$\mathfrak{sl}_2$ highest weight. This implies both parts (2) and (3).
\end{proof}


As a $BMW_r(\CK)$-module, $I_q(2l)$ is the direct sum of ${\rm
dim}_{\CK}V_q(2l)$ copies of a single irreducible $BMW_r(q)$-module, 
which we refer to as
$L^{BMW}_q(2l)$. Similarly, $I(2l)$ is the direct sum of ${\rm
dim}_\C V(2l)$ copies of an irreducible $B_r(3)$-module $L^{Br}(2l)$.
Recall that both ${\rm dim}_\CK V_q(2l)$ and ${\rm
dim}_\C V(2l)$ are equal to $2l+1$.

\begin{lem}\label{equal-dimension}
With notation as above, the irreducible $BMW_r(q)$-module
$L^{BMW}_q(2l)$ has the same dimension as that of the irreducible
$B_r(3)$-module $L^{Br}(2l)$.
\end{lem}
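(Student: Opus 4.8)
The plan is to deduce the equality by comparing both sides with the integral lattice $I^{reg}_q(2l)$ of \lemref{UxBr-irrep}, using the fact that the rank of a free $\CA_q$-module is unchanged by either of the base changes $\phi_q$ and $\phi_1$.

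First I would record that $\CA_q$, being a localisation of the principal ideal domain $\C[q^{\pm1}]$, is again a principal ideal domain; hence the submodule $I^{reg}_q(2l)=I_q(2l)\cap V^{reg}_q(2)^{\otimes r}$ of the free $\CA_q$-module $V^{reg}_q(2)^{\otimes r}$ is itself free of finite rank. The one point that genuinely needs attention is that $I^{reg}_q(2l)$ is a \emph{full} lattice in $I_q(2l)$, i.e. that $\CK\otimes_{\phi_q}I^{reg}_q(2l)=I_q(2l)$. This I would verify directly: given $v\in I_q(2l)\subseteq V_q(2)^{\otimes r}=\CK\otimes_{\phi_q}V^{reg}_q(2)^{\otimes r}$, clearing denominators produces a nonzero $c\in\CA_q$ with $cv\in V^{reg}_q(2)^{\otimes r}$, and since $I_q(2l)$ is a $\CK$-subspace we also have $cv\in I_q(2l)$, so $cv\in I^{reg}_q(2l)$ and $v\in\CK\otimes_{\phi_q}I^{reg}_q(2l)$. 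Consequently $\rank_{\CA_q}I^{reg}_q(2l)=\dim_\CK I_q(2l)$.

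Next, because $I^{reg}_q(2l)$ is free over $\CA_q$, the specialisation $\C\otimes_{\phi_1}I^{reg}_q(2l)$ has the same rank, so $\dim_\C\bigl(\C\otimes_{\phi_1}I^{reg}_q(2l)\bigr)=\rank_{\CA_q}I^{reg}_q(2l)=\dim_\CK I_q(2l)$. By \lemref{UxBr-irrep}(2) this specialisation is exactly $I(2l)$, so we obtain $\dim_\C I(2l)=\dim_\CK I_q(2l)$.

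Finally I would invoke the two isotypic decompositions recorded just before the statement: over $\CK$, $I_q(2l)$ is a direct sum of $\dim_\CK V_q(2l)=2l+1$ copies of $L^{BMW}_q(2l)$, while over $\C$, $I(2l)$ is a direct sum of $\dim_\C V(2l)=2l+1$ copies of $L^{Br}(2l)$. Hence $(2l+1)\dim_\CK L^{BMW}_q(2l)=\dim_\CK I_q(2l)=\dim_\C I(2l)=(2l+1)\dim_\C L^{Br}(2l)$, and cancelling the factor $2l+1$ gives the assertion. Apart from the full-lattice remark above (and the observation that $\CA_q$ is a PID), every step is pure bookkeeping with dimensions, so I do not anticipate any real obstacle.
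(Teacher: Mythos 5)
Your proof is correct, and it takes a genuinely different route from the paper's. The paper argues globally: it first observes that $\sum_l \dim_\CK I_q(2l)=3^r=\sum_l\dim_\C I(2l)$ (because the $I(2l)$ are the distinct isotypic components of $V(2)^{\otimes r}$), then invokes the specialisation inequality $\dim_\CK I_q(2l)\geq\dim_\C I(2l)$ (citing the rank argument of Proposition~\ref{cellmodspec}(5)), and concludes by forcing termwise equality from the equality of sums. Your argument is local: you observe that $\CA_q$ is a PID, that $I^{reg}_q(2l)$ is therefore free of finite rank (being a submodule of the free module $V^{reg}_q(2)^{\otimes r}$), and that it is a full lattice in $I_q(2l)$, whence both base changes $\phi_q$ and $\phi_1$ preserve its rank and equality of dimensions follows directly, one $l$ at a time, without the global sum constraint. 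Your route is arguably cleaner, since the paper's inequality is a slightly loose citation (Proposition~\ref{cellmodspec}(5) concerns cell-module radicals via Gram matrices, not isotypic components), whereas your freeness observation makes the equality transparent. The one step you should be slightly more explicit about is that the natural map $\C\otimes_{\phi_1}I^{reg}_q(2l)\to\C\otimes_{\phi_1}V^{reg}_q(2)^{\otimes r}$ is injective (so that the abstract specialisation can be identified with a subspace of $V(2)^{\otimes r}$ as in Lemma~\ref{UxBr-irrep}(2)); this holds because $V^{reg}_q(2)^{\otimes r}/I^{reg}_q(2l)$ is torsion-free by essentially the same clearing-of-denominators argument you give for fullness, but since you only need the dimension count and not the identification, the omission does not affect the validity of your proof.
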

\begin{proof} If $l\ne l'$, $I(2l)$ and $I(2l')$ intersect
trivially since they are isotypical components with different
highest weights. Thus $\sum_l {\rm dim}I_q(2l)=3^r =\sum_l {\rm
dim}I(2l)$. But the specialisation argument of 
Proposition \ref{speccell}(5) shows that
${\rm dim}_\CK I_q(2l) \ge {\rm dim}_\C I(2l)$, whence \[{\rm
dim}_\CK I_q(2l) = {\rm dim}_\C I(2l).\]  Thus
\[
{\rm dim}_\CK L^{BMW}_q(2l) = \frac{{\rm dim}_\CK I_q(2l)}{2l+1}= \frac{{\rm
dim}_\C I(2l)}{2l+1}={\rm dim}_\C L^{Br}(2l).\]
\end{proof}

Denote by $\CR(\CK)$ the radical of the BMW algebra $BMW_r(\CK)$ and let
$\overline{BMW}_r(\CK)=BMW_r(\CK)/\CR(\CK)$ be its largest semisimple
quotient. Then as explained in \S \ref{radical},
$\overline{BMW}_r(\CK) = \oplus_{\lambda\in\Lambda}
\overline{B}_\CK(\lambda)$ with $\overline{B}_\CK(\lambda) \cong
\rm{End}_{\CK}(L_\CK(\lambda))$, where $L_\CK(\lambda)$ is the simple
head of the cell module $W_\CK(\lambda)$. 
As in Lemma \ref{annihilators}, the surjective algebra
homomorphism $\eta_q: BMW_r(\CK)\to {\rm
End}_{\cU_q(\fsl_2)}(V_q^{\otimes r})$ induces a surjection
$\overline{\eta}_q: \overline{BMW}_r(\CK)\to {\rm
End}_{\cU_q(\fsl_2)}(V_q^{\otimes r})$.

Similarly, let $\overline{B}_r(3)$ denote the largest semi-simple
quotient of the Brauer algebra. Then
$\overline{B}_r(3)=\oplus_{\lambda\in\Lambda} \overline{B}(\lambda)$
with $\overline{B}(\lambda) \cong \rm{End}_{\C}(L(\lambda))$, where
$L(\lambda)$ is the simple head of the cell module $W(\lambda)$. Let
$\overline{\eta}: \overline{B}_r(3) \to {\rm
End}_{\fsl_2}(V^{\otimes r})$ be the surjection induced by the map
$\eta: B_r(3)\to {\rm End}_{\fsl_2}(V^{\otimes r})$.

Recall that in analogy with (\ref{add}), we have 
\be\label{Bqlambdageneric}
BMW_r(q)= \oplus_{\lambda\in\Lambda} B_{\CA_q}(\{\lambda\}), \quad
\text{where}\quad B_{\CA_q}(\{\lambda\})=\sum_{S, T\in M(\lambda)} \CA_q
C^\lambda_{S, T}.
\ee
Taking appropriate tensor products 
with $\CK$ and $\C$ respectively, and writing
$B_\C(\{\lambda\})$ for what was denoted $B(\{\lambda\})$ in \S\S 
\ref{radical},\ref{classical3}, we obtain

\be\label{Bqlambda}
\begin{aligned}
BMW_r(\CK)= &\oplus_{\lambda\in\Lambda} B_{\CK}(\{\lambda\}),
\text{ where } B_{\CK}(\{\lambda\})=\sum_{S, T\in M(\lambda)} \CK
C^\lambda_{S, T},\text{ and }\\
BMW_r(\C)= &\oplus_{\lambda\in\Lambda} B_{\C}(\{\lambda\}), 
\text{ where } B_{\C}(\{\lambda\})=\sum_{S, T\in M(\lambda)} \C
C^\lambda_{S, T}.\\
\end{aligned}
\ee

\begin{prop}\label{correspondence}
Maintain the above notation. Then
$\overline{\eta}_q(\overline{B}_q(\lambda))\ne 0$ if and only if
$\overline{\eta}(\overline{B}(\lambda))\ne 0$. For such $\lambda$,
we have
${\rm dim}_\CK\overline{B}_q(\lambda) = {\rm dim}_\C\overline{B}(\lambda)$.
\end{prop}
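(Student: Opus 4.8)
The plan is to translate both asserted equalities into statements about which cell labels $\lambda\in\Lambda$ have a simple module that ``survives'' the relevant surjection, together with the dimensions of those simples, and then to force the quantum and classical data to agree by a dimension count. The engine is the combination of the cellular--specialisation apparatus of Proposition \ref{cellmodspec}, the radical theory of \S\ref{radical}, and the element $\Phi_q$.

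\emph{Reductions.} Applying Lemma \ref{annihilators}(3),(4) to $\eta_q\colon BMW_r(\CK)\lr E_q(r)$ and to $\eta\colon B_r(3)\lr E$, the restriction of $\overline\eta_q$ to $\overline B_q(\lambda)$ is zero or an isomorphism, and it is nonzero exactly when $N_\CK:=\Ker(\eta_q)$ acts as zero on $L_\CK(\lambda)$. Write $\Lambda^0_q$ for this set of $\lambda$, and let $\Lambda^0$ be the analogous classical set, which is the $\Lambda^0$ of Theorem \ref{lambda0}. Then the first assertion is precisely $\Lambda^0_q=\Lambda^0$, while for $\lambda$ in this set $\dim_\CK\overline B_q(\lambda)=(\dim_\CK L_\CK(\lambda))^2$ and $\dim_\C\overline B(\lambda)=(\dim_\C L(\lambda))^2$, so the second assertion becomes $\dim_\CK L_\CK(\lambda)=\dim_\C L(\lambda)$ for $\lambda\in\Lambda^0$. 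By Corollary \ref{kereta}, $\Lambda^0_q$ is the set of $\lambda$ such that $L_\CK(\lambda)$ occurs as a $BMW_r(\CK)$-summand of $V_q(2)^{\otimes r}$; using the decomposition $V_q(2)^{\otimes r}=\oplus_l I_q(2l)$ into $\cU_q(\fsl_2)\otimes BMW_r(\CK)$-irreducibles and the definition of $L^{BMW}_q(2l)$, this puts $\Lambda^0_q$ in bijection with $\{l\mid I_q(2l)\neq 0\}$ via $\lambda\leftrightarrow l$, $L_\CK(\lambda)\cong L^{BMW}_q(2l)$; likewise $\Lambda^0\leftrightarrow\{l\mid I(2l)\neq 0\}$. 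By Lemma \ref{UxBr-irrep}(2) these index sets coincide, so $|\Lambda^0_q|=|\Lambda^0|$, and by Lemma \ref{equal-dimension} $\dim_\CK L^{BMW}_q(2l)=\dim_\C L^{Br}(2l)$ for each relevant $l$, whence $\dim_\CK E_q(r)=\dim_\C E$.

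\emph{The main step} — and the one I expect to be the real obstacle — is the containment $\Lambda^0_q\subseteq\Lambda^0$; equivalently, that $N_\CK$ acts non-trivially on $L_\CK(\lambda)$ whenever $\lambda$ lies in the classical set $\Lambda^1:=\Lambda\setminus\Lambda^0$. I would deduce this from the element $\Phi_q$ of Proposition \ref{propsPhiq}, which lies in $BMW_r(q)$ and in $N_\CK$ (Proposition \ref{propsPhiq}(4)), and which specialises at $q=1$ to the classical element $\Phi$: this last point is a finite check on the coefficients ($a,b\to 1$, $c\to\tfrac14$, $d\to 0$) and on $F_q,e_{14},e_{1234}$, which I would carry out. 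By Theorem \ref{lambda0}(1), $\Phi$ acts non-trivially on $L(\lambda)$ for $\lambda\in\Lambda^1$, so there are $\overline y,\overline z\in W(\lambda)$ with $\phi_\lambda(\Phi\overline y,\overline z)\neq 0$. Lift $\overline y,\overline z$ to elements $y,z$ of the $\CA_q$-cell module $W_q(\lambda)$. Since $\Phi_q$ acts on $W_q(\lambda)$ by an $\CA_q$-matrix whose $\phi_1$-image is the matrix of $\Phi$ on $W(\lambda)$ (Proposition \ref{cellmodspec}(2)) and the Gram matrix of $W_q(\lambda)$ has $\phi_1$-image that of $W(\lambda)$ (Proposition \ref{cellmodspec}(3)), the scalar $\phi^q_\lambda(\Phi_q y,z)\in\CA_q$ has $\phi_1$-image $\phi_\lambda(\Phi\overline y,\overline z)\neq 0$; hence it is a nonzero element of the domain $\CA_q\subseteq\CK$, so $\Phi_q y$ is not in the radical of the form on $W_\CK(\lambda)$, and $\Phi_q$ acts non-trivially on $L_\CK(\lambda)$. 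Thus $\lambda\in\Lambda^1_q$, proving $\Lambda^0_q\subseteq\Lambda^0$; with $|\Lambda^0_q|=|\Lambda^0|$ this gives $\Lambda^0_q=\Lambda^0$.

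\emph{The dimension count.} For every $\lambda\in\Lambda$, Proposition \ref{cellmodspec}(5) applied to $\phi_1$ (together with the fact that the rank of a Gram matrix over the domain $\CA_q$ is unchanged on passing to $\CK$) gives $\dim_\CK L_\CK(\lambda)=\rank_{\CA_q}L_q(\lambda)\geq\dim_\C L(\lambda)$. Summing squares over $\lambda\in\Lambda^0=\Lambda^0_q$,
\[
\dim_\CK E_q(r)=\sum_{\lambda\in\Lambda^0}(\dim_\CK L_\CK(\lambda))^2\;\geq\;\sum_{\lambda\in\Lambda^0}(\dim_\C L(\lambda))^2=\dim_\C E,
\]
and the two ends are equal by the first paragraph, so each term satisfies $\dim_\CK L_\CK(\lambda)=\dim_\C L(\lambda)$, i.e.\ $\dim_\CK\overline B_q(\lambda)=\dim_\C\overline B(\lambda)$. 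The only genuinely computational input is the verification $\phi_1(\Phi_q)=\Phi$; everything else is bookkeeping with cellular specialisation.
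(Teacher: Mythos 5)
Your proof is correct, but it takes a genuinely different route from the paper's. The paper works directly with the $\CA_q$-lattice $I^{reg}_q(\lambda):=B_{\CA_q}(\{\lambda\})\bigl(V^{reg}_q(2)^{\otimes r}\bigr)$: its two specialisations $I_q(\lambda)=\CK\otimes_{\phi_q}I^{reg}_q(\lambda)$ and $I(\lambda)=\C\otimes_{\phi_1}I^{reg}_q(\lambda)$ are identified as $\cU_q(\fsl_2)$- and $\fsl_2$-isotypic components of the same highest weight, and (since $I^{reg}_q(\lambda)$ is a free $\CA_q$-module sitting inside a free module, with Remark \ref{tensor-basis} controlling the $q\to 1$ fibre) they vanish simultaneously. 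This gives $\overline\eta_q(\overline B_q(\lambda))\neq 0\iff I_q(\lambda)\neq 0\iff I(\lambda)\neq 0\iff\overline\eta(\overline B(\lambda))\neq 0$, i.e.\ the iff in one stroke and in both directions; the dimension equality is then immediate from Lemma \ref{equal-dimension}. You instead prove only the containment $\Lambda^0_q\subseteq\Lambda^0$, using that $\phi_1(\Phi_q)=\Phi$ (a correct coefficient check: $a,\tilde a,b\to 1$, $c\to\frac14$, $d\to 0$, $f_i\to 1-s_i$, $e_{14}\to e_{1,4}$) together with Theorem \ref{lambda0}(1) and the specialisation of Gram matrices and action matrices from Proposition \ref{cellmodspec}(2),(3); then you close the argument with a cardinality count driven by Lemma \ref{UxBr-irrep}(2), and recover the dimension equality by a global sum-of-squares comparison against the one-sided inequality of Proposition \ref{cellmodspec}(5). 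Both routes rely on Lemma \ref{UxBr-irrep} and Lemma \ref{equal-dimension}, and both are sound; the paper's is shorter and sharper (no appeal to $\Phi_q$ at all), while yours has the incidental merit that it already establishes that $\Phi_q$ acts non-trivially on $L_\CK(\lambda)$ for $\lambda\in\Lambda^1$, which the paper derives separately as part of Theorem \ref{lambdaq}(2).

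One small point worth making explicit in your write-up: the bijections $\Lambda^0_q\leftrightarrow\{l:I_q(2l)\neq0\}$ and $\Lambda^0\leftrightarrow\{l:I(2l)\neq0\}$ need not a priori assign the same cell label $\lambda$ to a given $l$, so matching cardinalities alone does not give $\Lambda^0_q=\Lambda^0$; you do in fact notice this and supply the extra containment $\Lambda^0_q\subseteq\Lambda^0$ via $\Phi_q$, but it is worth flagging that this step — absent from the paper's proof precisely because the lattice $I^{reg}_q(\lambda)$ links the two bijections — is the crux of your alternative argument rather than a throwaway lemma.
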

\begin{proof}
Let $I_q^{reg}(\lambda):= B_{\CA_q}(\{\lambda\})(V_q^{reg}(2)^{\otimes
r})$, where $B_{\CA_q}(\{\lambda\})$ is defined by equation
\eqref{Bqlambdageneric}. Set $I_q(\lambda):=\CK\otimes_{\phi_q}
I_q^{reg}(\lambda)$ and $I(\lambda):=\C\otimes_{\phi_1}
I_q^{reg}(\lambda)$. Then
\begin{eqnarray*}
I(\lambda)&=&B_\C(\{\lambda\})(\C\otimes_{\phi_1} V_q^{reg}(2)^{\otimes
r}).
\end{eqnarray*}
Note that $I_q(\lambda)$ is a $\cU_q(\fsl_2)$-isotypic component
of $V_q(2)^{\otimes r}$, and $I(\lambda)$ is isomorphic to an
$\fsl_2$-isotypic component of $V(2)^{\otimes r}$. The
$\cU_q(\fsl_2)$-highest weight of $I_q(\lambda)$ is equal to the
$\fsl_2$-highest weight of $I(\lambda)$.

If $\overline{\eta}_q(\overline{B}_q(\lambda))=0$, then
$I_q(\lambda)=0$. In this case, $I(\lambda)=0$ and this is
equivalent to $\overline{\eta}(\overline{B}(\lambda))=0$. If
$\overline{\eta}_q(\overline{B}_q(\lambda))\ne 0$, then
$I_q(\lambda)\ne 0$, and it follows from Remark \ref{tensor-basis}
that $I(\lambda)\ne 0$. Therefore
$\overline{\eta}(\overline{B}(\lambda))\ne 0$.

With the first statement of the Proposition established, the second
follows immediately from Lemma \ref{equal-dimension}.
\end{proof}

Recall that $\Lambda^0$ denotes the set of all partitions with $3$
or fewer boxes in the first two columns, and
$\Lambda^1=\Lambda(r)\setminus \Lambda_0$. We have the following
analogue of Theorem \ref{lambda0}.
\begin{thm}\label{lambdaq}
Let $N_\CK$ be the kernel of the surjective map $\eta_q: BMW_r(\CK)\to
{\rm End}_{\cU_q(\fsl_2)}(V_q(2)^{\otimes r})$. Denote by $\CP_\CK$ the
two-sided ideal of $BMW_r(\CK)$ generated by $\Phi_q$, and by $\CR_\CK$
the radical of $BMW_r(\CK)$.
\begin{enumerate}
\item $\rm{End}_{\cU_q(\fsl_2)}(V_q(2)^{\otimes
r})\cong\oplus_{\lambda\in\Lambda^0}{\ov B}_q(\lambda)$.
\item If $\lambda\in\Lambda^1$, then $\Phi_q (L_q(\lambda)) \ne 0$.
\item $N_\CK $ acts trivially on $L_q(\lambda)$ if and only if
$\lambda\in\Lambda^0$.
\item  $\CP_\CK+\CR_\CK=N_\CK$.
\end{enumerate}
\end{thm}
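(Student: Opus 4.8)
The plan is to mimic the structure of the proof of Theorem~\ref{lambda0}, using the specialisation machinery of \S\ref{speccell} to transfer the classical facts about $B_r(3)$ to the quantum setting $BMW_r(\CK)$. First I would prove (1) and (3) together. By Proposition~\ref{correspondence}, a simple component $\ov B_q(\lambda)$ of $\overline{BMW}_r(\CK)$ survives under $\overline\eta_q$ if and only if the corresponding component $\ov B(\lambda)$ of $\overline B_r(3)$ survives under $\overline\eta$; by Theorem~\ref{lambda0}(3) the latter happens precisely for $\lambda\in\Lambda^0$. Since $\overline\eta_q$ identifies $\rm{End}_{\cU_q(\fsl_2)}(V_q(2)^{\otimes r})$ with the sum of the surviving components, this gives (1) immediately; and by Corollary~\ref{kereta}, $N_\CK$ acts trivially on $L_q(\lambda)$ exactly for those $\lambda$, which is (3).

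Next I would prove (2). By Proposition~\ref{propsPhiq}(2), $\Phi_q$ is killed on both sides by $e_1,e_2,e_3$, so $\Phi_q$ lies in the image of the Hecke algebra $H_r(q^2)$ under the natural map (its image under $BMW_r(q)\to H_r(q^2)$ of Lemma~\ref{bmwspec}(3)) — more precisely, reading $\Phi_q=F_q(\dots)F_q$ and $F_q=(q^2+q^{-2})^2 d_1 d_3$ modulo $\CI$, one sees that $\Phi_q$ maps to a nonzero scalar multiple of the corresponding Hecke-algebra element built from $(1-T_1')(1-T_3')$. For $\lambda\in\Lambda^1$ with $|\lambda|=t\ge 4$, I would embed a copy of $H_t(q^2)$ into $BMW_r(\CK)$ by adjoining $e$'s on the right (as in the classical proof, but now using the idempotents $e_i[3]_q^{-1}$), and invoke the Hecke-algebra analogue of Lemma~\ref{sym}: the two-sided ideal generated by $(1-T_1')(1-T_3')$ is the sum of the minimal ideals for partitions with $\ge 4$ boxes in the first two columns. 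This shows $\Phi_q$ acts nontrivially on $L_q(\lambda)$ for such $\lambda$. For the finitely many $\lambda\in\Lambda^1$ with $|\lambda|\le 3$, I would argue by specialisation: by Proposition~\ref{cellmodspec}(2), the matrix of $\Phi_q$ on $W_q(\lambda)$ specialises at $\phi_1$ to the matrix of $\Phi$ on $W(\lambda)$, which is nonzero by Theorem~\ref{lambda0}(1); hence the quantum matrix is nonzero, and then one uses that $\Phi_q\in N_\CK$ forces (by (3), already proved) $\Phi_q$ to act nontrivially on the head $L_q(\lambda)$ rather than merely on the radical — alternatively one checks directly that $\Phi_q W_q(\lambda)$ is not contained in $R_q(\lambda)$ using the explicit computations of Proposition~\ref{prooffr5} lifted to the quantum level.

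Finally, (4) is formal given (1)--(3), exactly as in Theorem~\ref{lambda0}(4): since $\Phi_q\in N_\CK$, we have $\CP_\CK+\CR_\CK\subseteq N_\CK$; conversely $\CP_\CK+\CR_\CK$ is a two-sided ideal that acts trivially on $L_q(\lambda)$ for $\lambda\in\Lambda^0$ (because $\Phi_q\in N_\CK$ does, and $\CR_\CK$ does) and nontrivially for $\lambda\in\Lambda^1$ (because $\Phi_q$ does, by (2)); by Lemma~\ref{annihilators}(5) applied to $\overline{BMW}_r(\CK)$, an ideal is determined up to the radical by the set of $L_q(\lambda)$ on which it acts nontrivially, so $\CP_\CK+\CR_\CK$ and $N_\CK$ have the same image in $\overline{BMW}_r(\CK)$; since both contain $\CR_\CK$, they are equal.

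The main obstacle I anticipate is part (2) in the low-rank range $|\lambda|\le 3$: here there is no symmetric-group/Hecke shortcut, and one must genuinely check that $\Phi_q$ acts nontrivially on the relevant cell modules. The cleanest route is the specialisation argument via Proposition~\ref{cellmodspec}, but it only directly yields $\Phi_q W_q(\lambda)\neq 0$, not that $\Phi_q$ acts nontrivially on the \emph{head}; closing that gap requires either the quantum analogue of the explicit computation \eqref{case(21)} (feasible but tedious, since it involves the Kazhdan--Lusztig basis of $H_3(q^2)$ and the $q$-deformed diagram calculus) or a separate argument that $\Phi_q$ cannot act entirely within $R_q(\lambda)$, which one can extract from (3) once the dimension bookkeeping of Proposition~\ref{cellmodspec}(5)--(6) is in place.
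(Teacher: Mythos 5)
Your treatment of (1), (3) and (4) is essentially the paper's, via Proposition~\ref{correspondence}, Corollary~\ref{kereta}, and the ideal-classification argument modulo the radical, so there is nothing to say there. Part (2) is where the proposal diverges, and where you correctly identify the sticking point but do not actually close it.

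Your route for (2) splits into cases. For $|\lambda|\geq 4$ you propose to push $\Phi_q$ through to the Hecke-algebra quotient and invoke a Hecke analogue of Lemma~\ref{sym}; this is plausible but would require proving that unstated Hecke analogue (the paper never does), and also a careful check that the relevant cell modules of $BMW_r(\CK)$ really see the Hecke ideal in the way you want. For $|\lambda|\leq 3$ you specialise via Proposition~\ref{cellmodspec}(2) to conclude $\Phi_q W_\CK(\lambda)\neq 0$, and then you candidly note that this does not yet give $\Phi_q L_q(\lambda)\neq 0$, since $\Phi_q W_\CK(\lambda)$ could a priori sit inside $R_\CK(\lambda)$. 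Neither of your proposed remedies closes this: part (3) tells you only that \emph{some} element of $N_\CK$ moves $L_q(\lambda)$, not that $\Phi_q$ does; and redoing the quantum analogue of \eqref{case(21)} is exactly the computation you want to avoid.

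The paper closes the gap uniformly in $\lambda$ with one extra idea you are missing: cyclicity. From Theorem~\ref{lambda0}(1), $\Phi$ moves $L(\lambda)$, so there is some $w\in W(\lambda)$ with $\Phi w\notin R(\lambda)$; by the cyclic property of cell modules, $B_r(3)\Phi w=W(\lambda)$, hence $B_r(3)\,\Phi\,W(\lambda)=W(\lambda)$, \emph{all} of the cell module. Now $B_r(3)\,\Phi\,W(\lambda)$ is the specialisation at $q=1$ of the $\CA_q$-module $BMW_r(q)\,\Phi_q\,W_{\CA_q}(\lambda)$, so the latter has full rank, i.e.\ $BMW_r(\CK)\,\Phi_q\,W_\CK(\lambda)=W_\CK(\lambda)$. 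If $\Phi_q$ acted trivially on $L_q(\lambda)$, then $\Phi_q W_\CK(\lambda)\subseteq R_\CK(\lambda)$, whence the submodule it generates would lie in $R_\CK(\lambda)\subsetneq W_\CK(\lambda)$ — contradiction. This is cleaner than your case split, needs no Hecke detour, and is precisely the step that turns ``$\Phi_q$ is nonzero on the cell module'' into ``$\Phi_q$ is nonzero on the head.''
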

\begin{proof}
Part (1) is an easy corollary of Proposition \ref{correspondence} in
view of Theorem \ref{lambda0}(3).

For any $\lambda\in\Lambda^1$, $B_r(3)\Phi W(\lambda) = W(\lambda)$
by Theorem \ref{lambda0}(1) and the cyclic property of $W(\lambda)$.
Since $B_r(3)\Phi W(\lambda)\cong \C\otimes_{\phi_1}
BMW_r(\CK)\Phi_q W_{\CA_q}(\lambda)$ and $W(\lambda)\cong
\C\otimes_{\phi_1}W_{\CA_q}(\lambda)$, it follows that $BMW_r(\CK)\Phi_q
W_{\CA_q}(\lambda)=W_{\CA_q}(\lambda)$ since
$\CK\otimes_{\CA_q}W_{\CA_q}(\lambda)$ and $W(\lambda)$ have the
same dimensions. This implies part (2).

The proof of parts (3) and (4) is essentially the same as that of
Theorem \ref{lambda0}(2), (4), and will  be omitted.
\end{proof}

\begin{rem}\label{lambdageneric}
\begin{enumerate}
\item
Although Theorem \ref{lambdaq} has been stated over $\CK$, it is clear that 
the statements (1)-(4) hold integrally, 
i.e. if we replace $\CK$ by $\CA_q$ and all
$\CK$ vector spaces by the corresponding free $\CA_q$-modules.
\item Note that (2) and (3) of Theorem \ref{lambdaq} imply that 
$\Phi_q(L_q(\lambda))=0$ if and only if $\lambda\in\Lambda^0$. 
This is because $\Phi_q\in N_q$ implies (by (3)) that $\Phi_q$
acts trivially on $L_q(\lambda)$ for $\lambda\in\Lambda^0$, while
(3) shows that $\Phi_q(L_q(\lambda))\neq 0$ for $\lambda\in\Lambda
\setminus\Lambda^0$.
\end{enumerate}
\end{rem}

The final result of this section is that to determine whether $\Phi_q$
generates $N_q$, it suffices to check the classical case.

\begin{prop}\label{classquant}
With notation as in Theorem \ref{lambdaq}, if $\langle\Phi\rangle=\CP$
contains $\CR$, the radical of $B_r(3)$, then $\langle\Phi_q\rangle_{BMW_r(\CK)}
=\CP_\CK$
contains the radical $\CR_\CK$ of $BMW_r(\CK)$.
\end{prop}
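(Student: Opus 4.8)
The plan is to transport the \emph{classical} fact $\CP\supseteq\CR$ across the two specialisations of the integral form $BMW_r(q)$, and then invoke the abstract criterion Theorem \ref{rad}(3) for the algebra $BMW_r(\CK)$, which is cellular and quasi-hereditary by the discussion in \S\ref{speccell}. First I would unpack the hypothesis: by Theorem \ref{lambda0}(4), $\CP\supseteq\CR$ is equivalent to the truth of Conjecture \ref{conj}, so Corollary \ref{radcriterion} gives $R(\lambda)\subseteq \CP W(\lambda)$ for every $\lambda\in\Lambda^0$; since $\Phi\in N$ acts trivially on $L(\lambda)$ for such $\lambda$ we also have $\CP W(\lambda)\subseteq R(\lambda)$, so in fact $\CP W(\lambda)=R(\lambda)$ for all $\lambda\in\Lambda^0$. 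The target is the analogous equality $\CP_\CK W_\CK(\lambda)=R_\CK(\lambda)$ for $\lambda\in\Lambda^0$ (where $R_\CK(\lambda)$ denotes the radical of the cell module $W_\CK(\lambda)$): granting it, Theorem \ref{rad}(3) applied with $J=\CP_\CK$ finishes the proof, once one checks that $\CP_\CK^{*}=\CP_\CK$ (which follows from $\Phi_q^{*}=\Phi_q$, a direct check from \eqref{defPhiq} using $g_i^{*}=g_i$, $e_i^{*}=e_i$) and that $\{\lambda:\CP_\CK L_q(\lambda)=0\}=\Lambda^0$ (for $\lambda\in\Lambda^0$ this is immediate from $\CP_\CK\subseteq N_\CK$ and Theorem \ref{lambdaq}(3), while for $\lambda\in\Lambda^1$ it is Theorem \ref{lambdaq}(2)).

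The core of the argument will be a rank count through the $\CA_q$-lattice. Fix $\lambda\in\Lambda^0$ and set $M_q(\lambda):=BMW_r(q)\,\Phi_q\,W_q(\lambda)\subseteq W_q(\lambda)$. Since $\CA_q$ is a localisation of the principal ideal domain $\C[q^{\pm1}]$, it is itself a principal ideal domain, so the submodule $M_q(\lambda)$ of the free module $W_q(\lambda)$ is free, of some rank $m$. I would then read off $m$ in two ways. Applying the flat functor $\CK\otimes_{\phi_q}-$ identifies $\CK\otimes_{\phi_q}M_q(\lambda)$ with its image $BMW_r(\CK)\Phi_q W_\CK(\lambda)=\CP_\CK W_\CK(\lambda)$, so $\dim_\CK\CP_\CK W_\CK(\lambda)=m$. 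Reducing modulo $q-1$ via $\phi_1$, and using $\phi_1(\Phi_q)=\Phi$, the image of $\C\otimes_{\phi_1}M_q(\lambda)$ in $W(\lambda)$ is $B_r(3)\Phi W(\lambda)=\CP W(\lambda)$, whence $\dim_\C\CP W(\lambda)\le m$. Finally, for $\lambda\in\Lambda^0$ Proposition \ref{correspondence} gives $\dim_\CK L_\CK(\lambda)=\dim_\C L(\lambda)$, hence $\dim_\CK R_\CK(\lambda)=\dim_\C R(\lambda)$; and $\CP_\CK W_\CK(\lambda)\subseteq R_\CK(\lambda)$ automatically, since $\Phi_q\in N_\CK$ and $N_\CK$ kills $L_q(\lambda)$ for $\lambda\in\Lambda^0$. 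Chaining these,
\[
\dim_\CK R_\CK(\lambda)=\dim_\C R(\lambda)=\dim_\C\CP W(\lambda)\le m=\dim_\CK\CP_\CK W_\CK(\lambda)\le\dim_\CK R_\CK(\lambda),
\]
so equality holds throughout and $\CP_\CK W_\CK(\lambda)=R_\CK(\lambda)$, as required. Running this for every $\lambda\in\Lambda^0$ and then applying Theorem \ref{rad}(3) to $BMW_r(\CK)$ with $J=\CP_\CK$ yields $\CP_\CK\supseteq\CR_\CK$.

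The main obstacle is this transport step, and within it the two facts that make the rank of $M_q(\lambda)$ both well defined and stable: that $\CA_q$ is a PID (so $M_q(\lambda)$ is genuinely free, its rank surviving unchanged under both $\CK\otimes_{\phi_q}-$ and $\C\otimes_{\phi_1}-$), and that on $\Lambda^0$ the cell-module radicals do not jump under specialisation to $\CK$, which is exactly the content of Proposition \ref{correspondence} (itself resting on the specialisation estimate Proposition \ref{cellmodspec}(5) and on Lemma \ref{equal-dimension}). Everything else — matching the classical and quantum versions of $\Lambda^0$, the $*$-invariance of $\Phi_q$, verifying $\phi_1(\Phi_q)=\Phi$, and keeping track of which dimension inequality points which way — I expect to be routine.
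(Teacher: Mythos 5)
Your argument is correct, and it reaches the key intermediate equality $\CP_\CK W_\CK(\lambda)=R_\CK(\lambda)$ ($\lambda\in\Lambda^0$) by a genuinely different route from the paper. After the common reduction via Theorem \ref{rad}(3) (applied classically to extract $\CP W(\lambda)=R(\lambda)$, and at the end to conclude over $\CK$), the paper transports this across the $\CA_q$-lattice by a Nakayama-type torsion argument: using $\phi_1(\Phi_q)=\Phi$ and the classical equality it shows that multiplication by $q-1$ is surjective on the finitely generated $\CA_q$-module $R_{\CA_q}(\lambda)/\CP_q W_{\CA_q}(\lambda)$, whence by the determinant trick this quotient is annihilated by a nonzero element of $\CA_q$ and therefore dies after tensoring with $\CK$. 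You instead make a rank count: you observe that $\CA_q$ is a PID (a localisation of $\C[q^{\pm1}]$), so the submodule $M_q(\lambda)=BMW_r(q)\,\Phi_q\,W_{\CA_q}(\lambda)$ of the free module $W_{\CA_q}(\lambda)$ is free of some rank $m$, read $m$ off as $\dim_\CK\CP_\CK W_\CK(\lambda)$ by flat base change to $\CK$, and bound $\dim_\C\CP W(\lambda)\le m$ by reduction mod $q-1$; then $\dim_\CK R_\CK(\lambda)=\dim_\C R(\lambda)$, which you correctly extract from Proposition \ref{correspondence} (equivalently Lemma \ref{equal-dimension}), closes the chain. Both arguments are sound; the paper's is a bit more self-contained (it never needs $\CA_q$ to be a PID, nor Proposition \ref{correspondence}, just the domain property of $\CA_q$ and the Cayley--Hamilton/determinant trick), whereas your rank count leans on the dimension stability of the simple heads across the two specialisations, which the paper has already established and which makes the inequalities click shut transparently. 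The checks you flag as routine — $\Phi_q^{*}=\Phi_q$ so $\CP_\CK$ is $*$-stable, $\phi_1(\Phi_q)=\Phi$, and the identification of $\{\lambda:\CP_\CK L_\CK(\lambda)=0\}$ with $\Lambda^0$ via Theorem \ref{lambdaq}(2),(3) — are indeed routine and correct.
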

\begin{proof}
We have already noted that by \cite[Theorem 3.11]{X}, $BMW_r(\CK)$ is
cellular, with the canonical anti-involution being defined by 
$g_i^*=g_i$ and $e_i^*=e_i$. It follows that $\Phi_q^*=\Phi_q$,
and hence that $\CP_\CK$ is a self-dual two sided ideal of $BMW_r(\CK)$.
Hence we may apply Theorem \ref{rad} to deduce that $\CP_\CK\supseteq\CR_\CK$
if and only if $\CP_\CK W_\CK(\lambda)=R_\CK(\lambda)$ for each 
$\lambda\in\Lambda^0$, where $\Lambda^0$ is as in Theorem \ref{lambdaq}.

Now we are given that $\CP\supseteq\CR$, whence $\CP W(\lambda)=R(\lambda)$,
where $W(\lambda)=W_\C(\lambda)$, etc. Write 
$\CP_q:=\langle\Phi_q\rangle_{BMW_r(q)}$.

Let $\lambda\in\Lambda^0$ and consider the $\CA_q$-submodule 
$\CP_qW_{\CA_q}(\lambda)$ of $R_{\CA_q}(\lambda)$. For any element
$r\in R_{\CA_q}(\lambda)$, since $1\otimes_{\phi_1} r\in
1\otimes_{\phi_1}\CP_qW_{\CA_q}(\lambda)$, there exist elements
$r_0\in \CP_qW_{\CA_q}(\lambda)$ and 
$w=(q-1)r_1\in (q-1)W_{\CA_q}(\lambda)$
such that $r=r_0+w$. But $\CP_qW_{\CA_q}(\lambda)\subseteq 
R_{\CA_q}(\lambda)$ by Theorem \ref{lambdaq}(3), whence $w\in 
R_{\CA_q}(\lambda)$, and so evidently $r_1\in R_{\CA_q}(\lambda)$,
since $r_1$ has zero inner product with $W_{\CA_q}(\lambda)$.


It follows that multiplication by $q-1$ is an invertible
endomorphism of the quotient $R_q(\lambda)/\CP_qW_{\CA_q}(\lambda)$,
whence the latter is an $\CA_q$-torsion module. 
Hence $\CK\otimes_{\phi_q}(R_q(\lambda)/\CP_qW_{\CA_q}(\lambda))=0$, 
i.e. $R_\CK(\lambda)=\CP_\CK W_{\CK}(\lambda)$.
\end{proof}

\begin{cor}\label{1impliesq}\begin{enumerate}
\item
With the above notation, if $\Phi$ generates $N$ 
then $\Phi_q$ generates $N_\CK$.
\item If $r\leq 6$, then $\Phi_q$ generates $N_\CK$
as an ideal of $BMW_r(\CK)$.
\end{enumerate}
\end{cor}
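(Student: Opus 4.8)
Corollary~\ref{1impliesq} is essentially a formal consequence of the two preceding results, so the proof will be short. For part~(1), suppose $\Phi$ generates $N$; that is, $\CP=\langle\Phi\rangle$ equals $N=\Ker(\eta)$ in $B_r(3)$. By Theorem~\ref{lambda0}(4) we always have $\CP+\CR=N$, so the hypothesis $\CP=N$ is equivalent to $\CP\supseteq\CR$, the radical of $B_r(3)$. Now invoke Proposition~\ref{classquant}: it says precisely that $\CP\supseteq\CR$ implies $\CP_\CK=\langle\Phi_q\rangle_{BMW_r(\CK)}\supseteq\CR_\CK$. On the other hand Theorem~\ref{lambdaq}(4) gives $\CP_\CK+\CR_\CK=N_\CK$ unconditionally. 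Combining the two, $N_\CK=\CP_\CK+\CR_\CK=\CP_\CK$, so $\Phi_q$ generates $N_\CK$ as an ideal of $BMW_r(\CK)$. This proves part~(1).

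For part~(2), I would simply apply part~(1) together with Theorem~\ref{comp}. By Theorem~\ref{comp}, for $r\leq 6$ the element $\Phi$ generates the kernel $N$ of $\eta:B_r(3)\lr E_r$. Hence the hypothesis of part~(1) is satisfied for $r\leq 6$, and we conclude that $\Phi_q$ generates $N_\CK$ as an ideal of $BMW_r(\CK)$ for all $r\leq 6$.

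There is really no obstacle here: the entire content has been front-loaded into Proposition~\ref{classquant} (whose proof in turn rests on the specialisation technology of \S\ref{speccell}, in particular Proposition~\ref{cellmodspec}, and on the ``$q-1$ torsion'' argument) and into Theorems~\ref{lambda0}, \ref{lambdaq} and \ref{comp}. The only thing to be careful about is the logical bookkeeping: one must note the equivalence ``$\CP=N$'' $\iff$ ``$\CP\supseteq\CR$'' coming from $\CP+\CR=N$, and then feed $\CP\supseteq\CR$ into Proposition~\ref{classquant} rather than trying to re-prove anything. The quantum statement over $\CA_q$ (as opposed to $\CK$) alluded to in Remark~\ref{lambdageneric}(1) is not needed for this Corollary as stated, though the same argument would give it.
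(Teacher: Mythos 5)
Your proof is correct and follows essentially the same route as the paper's own (which simply says part~(1) is ``evident from Proposition~\ref{classquant}'' and part~(2) follows from~(1) and Theorem~\ref{comp}). You have merely filled in the bookkeeping — the equivalence $\CP=N\iff\CP\supseteq\CR$ coming from Theorem~\ref{lambda0}(4), and the combination with Theorem~\ref{lambdaq}(4) — which is exactly what the paper leaves implicit.
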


The first statement is evident from Proposition \ref{classquant},
while the second follows from the first, together with Theorem
\ref{comp}.

We end this section by noting that our results imply the quantum 
analogues of the results of \S 6.

\begin{cor}\label{qu45}
\begin{enumerate}
\item The algebra $BMW_4(\CK)$ is semisimple.
\item The cell modules of $BMW_5(\CK)$ are all simple except for
those corresponding to the partitions $(2, 1)$ and $(1^3)$, whose
simple heads have dimensions $15, 6$ respectively.
\item The cell modules $W_\CK(1^3)$ and $W_\CK(2,1)$ have two composition
factors each.
\item The radical of $BMW_5(\CK)$ has dimension $239$.
\end{enumerate}
\end{cor}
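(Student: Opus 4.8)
The plan is to deduce all four assertions from their classical counterparts in \S\ref{classical3} via the specialisation machinery of \S\ref{speccell}, using the homomorphisms $\phi_1:\CA_q\larr\C$ and $\phi_q:\CA_q\hookrightarrow\CK$ together with the identification $BMW_r(1)\cong B_r(3)$ of \lemref{bmwspec}(2). For (1): by \propref{brss}(2) the Brauer algebra $B_4(3)$ is semisimple, so every cell module $W(\lambda)$, $\lambda\in\Lambda(4)$, is simple. Since $W(\lambda)$ is the $\phi_1$-specialisation of the $\CA_q$-cell module $W_q(\lambda)$, \propref{cellmodspec}(4) and its proof show that $W_\CK(\lambda)$ is simple over $BMW_4(\CK)$ for every $\lambda$; as all cell modules of $BMW_4(\CK)$ are thus simple, the cellularity criterion (\cite[\S 3]{X}) gives that $BMW_4(\CK)$ is semisimple.

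The crux for $r=5$ is to compute $\dim_\CK L_\CK(\lambda)$ for every $\lambda\in\Lambda(5)$, where $L_\CK(\lambda)$ is the simple head of $W_\CK(\lambda)$. For $\lambda\notin\{(2,1),(1^3)\}$ the classical cell module $W(\lambda)$ is simple by \thmref{r5}(1), so as above $W_\CK(\lambda)$ is simple and $\dim_\CK L_\CK(\lambda)=w_\lambda=\dim_\C L(\lambda)$, with $w_\lambda$ given by \eqnref{cardmlambda}. For the two remaining partitions, both of which lie in the set $\Lambda^0$ of \thmref{lambdaq}, I would invoke \propref{correspondence}: since $\overline{B}_q(\lambda)\cong\End_\CK(L_\CK(\lambda))$ and $\dim_\CK\overline{B}_q(\lambda)=\dim_\C\overline{B}(\lambda)=(\dim_\C L(\lambda))^2$ for $\lambda\in\Lambda^0$, we obtain $\dim_\CK L_\CK(\lambda)=\dim_\C L(\lambda)$, namely $15$ for $(2,1)$ and $6$ for $(1^3)$. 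Since $15<20=w_{(2,1)}$ and $6<10=w_{(1^3)}$, these cell modules are reducible with heads of the asserted dimensions, which proves (2). Summing over all $\lambda$, $\dim_\CK\overline{BMW}_5(\CK)=\sum_\lambda(\dim_\C L(\lambda))^2=\dim_\C\overline{B}_5(3)$; since $\dim BMW_5(\CK)=\dim B_5(3)$ (both carry the diagram basis of \lemref{bmwspec}(1)), it follows that $\dim\CR_\CK=\dim BMW_5(\CK)-\dim\overline{BMW}_5(\CK)=\dim\CR=239$ by \thmref{r5}(4), which is (4).

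For (3): $W_\CK(1^3)$ and $W_\CK(2,1)$ are reducible by (2), hence have at least two composition factors each. Conversely, \propref{cellmodspec}(6), applied with $\phi=\phi_1$ (and using \propref{cellmodspec}(3), which shows the generic irreducibles $L_q(\mu)$ remain irreducible on base change to $\CK$, so that $[W_\CK(\lambda):L_\CK(\mu)]=[W_q(\lambda):L_q(\mu)]$), yields $[W_\CK(\lambda):L_\CK(\mu)]\leq[W(\lambda):L(\mu)]$ for all $\mu\geq\lambda$. Combined with \thmref{r5}(2),(3), this forces the composition factors of $W_\CK(1^3)$ to lie among $\{L_\CK(1^3),L_\CK(2,1^3)\}$ and those of $W_\CK(2,1)$ among $\{L_\CK(2,1),L_\CK(2^2,1)\}$, each with multiplicity at most one; since both cell modules are reducible, each has exactly two composition factors.

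The main obstacle is the dimension bookkeeping underlying (2) and (4): the upper bound $\dim_\CK L_\CK(\lambda)\leq\dim_\C L(\lambda)$ for the two non-generic partitions rests on \propref{correspondence} together with the fortunate circumstance that $(2,1)$ and $(1^3)$ lie in $\Lambda^0$ when $r=5$, which is precisely why the argument is special to small $r$. Everything else is a routine specialisation of the classical analysis in \S\ref{classical3}.
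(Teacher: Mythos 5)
Your proposal is correct and follows essentially the same specialisation strategy as the paper: deduce everything from the classical facts in \S\ref{classical3} by comparing the $\phi_1$ and $\phi_q$ specialisations, using Proposition \ref{cellmodspec} together with Proposition \ref{correspondence}. Part (1) is exactly the paper's argument via Proposition \ref{cellmodspec}(4) and the semisimplicity of $B_4(3)$.

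One thing worth flagging: the paper's printed proof of this corollary is very terse and only invokes Proposition \ref{cellmodspec}(5),(6), which by themselves establish the dichotomy ``$W_\CK(\lambda)$ is irreducible, or has exactly two composition factors with the classical dimensions'' --- they do not by themselves say which branch occurs for $(2,1)$ and $(1^3)$. Your explicit appeal to Proposition \ref{correspondence}, together with the observation that both exceptional partitions lie in $\Lambda^0$ for $r=5$ (so that $\dim_\CK \ov B_q(\lambda)=\dim_\C \ov B(\lambda)$ and hence $\dim L_\CK(\lambda)=\dim L(\lambda)$), is precisely the ingredient needed to force reducibility and thus nail down (2), from which (3) and (4) follow by the counting you describe. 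So your write-up supplies a detail the printed proof glosses over, but the underlying method is the same.
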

\begin{proof}
All statements are easy consequences of Proposition \ref{cellmodspec}.
For example, it follows from {\it loc. cit.}(5) and (6), that if 
$W^\phi(\lambda)$ has just two composition factors, then either
$W_q(\lambda)$ is irreducible, or it also has two composition factors,
whose dimensions are the same as those of $W^\phi (\lambda)$. This
implies the statements (3), (4) and (5) above.
\end{proof}

\section{A BMW-analogue of the Temperley-Lieb algebra}\label{analogue}

Although implicit above, we complete this work with an explicit 
definition of our analogue of the Temperley-Lieb algebra, together with 
some of its properties, as well as some questions about it.

\begin{definition}\label{tlanalogue}
Let $\CA_q$ be the ring $\C[q^{\pm 1}, [3]_q\inv, (q+q\inv)\inv, (q^2+q^{-2})\inv]$.
The $\CA_q$-algebra $P_r(q)$ has generators $\{g_i^{\pm 1},e_i\mid 1=1,\dots,r-1\}$
and relations given by (\ref{braidgiq}) together with 
$\Phi_q=0$, where $\Phi_q$ is the word in the generators defined
in (\ref{defPhiq}). We reproduce the relations here for convenience.

\begin{equation*}
\begin{aligned}
g_ig_j&=g_jg_i\text{ if }|i-j|\geq 2\\
g_ig_{i+1}g_i&=g_{i+1}g_ig_{i+1} \text{ for }1\leq i\leq r-1\\
g_i-g_i\inv&=(q^2-q^{-2})(1-e_i)\text { for all }i\\
g_ie_i&=e_ig_i=q^{-4}e_i\\
e_ig_{i-1}^{\pm 1}e_i&=q^{\pm 4}e_i\\
e_ig_{i+1}^{\pm 1}e_i&=q^{\pm 4}e_i\\
\end{aligned}
\end{equation*}
\begin{eqnarray*}
\Phi_q &=&aF_q e_2 F_q - b F_q - c F_qe_2e_{14}F_q +d F_q e_{1 2 3 4} F_q=0,
\end{eqnarray*}
where
\begin{eqnarray*}
\begin{aligned}
f_i&= -g_i-(1-q^{-2})e_i +q^2,\\
F_q&=f_1f_3,\\
e_{1 4}&=g_3^{-1} g_1 e_2 g_1^{-1} g_3,\\
e_{1 2 3 4}&= e_2 g_1 g_3^{-1} g_2 g_1^{-1} g_3,\\
a&=1+(1-q^{-2})^2,\;\tilde{a}= 1+(1-q^2)^2,\\
d&=(q-q^{-1})^2=q^2(a-1)=q^{-2}(\tilde a-1),\\
b&=1+(1-q^{2})^2+(1-q^{-2})^2,\\
c&= \frac{1 +(2+q^{-2})(1-q^{-2})^2 + (1+q^2)
(1-q^{-2})^4}{([3]_q-1)^2}.
\end{aligned}
\end{eqnarray*}

\end{definition}

\subsection{Properties of $P_r(q)$}

Let $\phi_q:\CA_q\hookrightarrow \CK(=\C(q^{\frac{1}{2}}))$ be the 
inclusion map, and let $\phi_1:\CA_q\lr\C$ be defined by $\phi_1(q)=1$.
Write $P_r(\CK):=\CK\otimes_{\phi_q}P_r(q)$, and 
$P_r(\C):=\C\otimes_{\phi_1}P_r(q)$.
Then there are surjective homomorphisms
\be\label{etaq}
\begin{aligned}
\eta_q:P_r(\CK)&\lr \End_{\cU_q(\fsl_2)} V_q(2)^{\otimes r},\text{ and }\\
\eta:P_r(\C)&\lr \End_{\fsl_2(\C)} V(2)^{\otimes r},\\
\end{aligned}
\ee
where $V(2)$ is the two-dimensional irreducible $\fsl_2(\C)$-module
and $V_q(2)$ is its quantum analogue.

Moreover, it follows from Theorem \ref{lambdaq}(4)
and that $\Ker(\eta_q)$ is the radical of $P_r(\CK)$,
i.e., that $\End_{\cU_q(\fsl_2)} V_q(2)^{\otimes r}$ is the largest semisimple 
quotient of $P_r(\CK)$. A similar statement applies to $P_r(\C)$.

\subsection{Some open problems} We finish with 
some problems relating to $P_r(q)$.

\smallskip
\begin{enumerate}
\item Determine whether $P_r(q)$ is generically semisimple, in particular
whether $P_r(\CK)$ is semisimple. By Proposition \ref{classquant}, this is 
true provided that $P_r(\C)$ is semisimple. The latter algebra has been shown 
(Proposition \ref{prooffr5}) to be semisimple for $r\leq 5$ and the case $r=6$
has been verified by computer.

\item A question equivalent to (1) is to determine whether $P_r(\CK)$ has
dimension given by the formula (\ref{d2r}). More explicitly,
we know that 
\be\label{dimin}\dim_\CK P_r(\CK)\geq 
\binom{2r}{r}+\sum_{p=0}^{r-1}\binom{2r}{2p}\binom{2p}{p}
\frac{3p-2r+1}{p+1},
\ee
with equality if and only if 
the ideal of $BMW_r(\CK)$ which is generated by $\Phi_q$ contains the radical
$\CR(\CK)$ of $BMW_r(\CK)$. 

We therefore ask whether equality holds in (\ref{dimin}).

\item Is $P_r(q)$ free as $\CA_q$-module?

\item Determine whether $P_r(q)$ has a natural  cellular structure.

\item Generalise the program of this work to higher dimensional
representations of quantum $\fsl_2$.

\end{enumerate}

Finally, we note that an affirmative answer to Conjecture \ref{conj}
implies an affirmative answer to both (1) and (2) above.


\begin{thebibliography}{9999}
%
%

\bibitem[BW] {BW} Birman, Joan and Wenzl, Hans,  ``Braids,
link polynomials and a new algebra'', {\sl Trans. Amer. Math. Soc.
\bf 313} (1989), 249--273.

\bibitem[Br] {Br} Richard Brauer, ``On algebras which are connected
with the semisimple continuous groups'', {\sl Ann. of Math. (2)
{\bf 38}}  (1937), 857--872.

\bibitem[D] {D} Drinfel'd, V. G. ``Quantum groups".  Proceedings of the
International Congress of Mathematicians, Vol. 1, 2 (Berkeley,
Calif., 1986),  798--820, Amer. Math. Soc., Providence, RI, 1987

\bibitem[GL96] {GL96} J. Graham and G.I. Lehrer, ``Cellular algebras'',
{\sl Inventiones Math. \bf 123} (1996), 1--34.

\bibitem[GL03]{GL03} J.J.~Graham and G.~I.~Lehrer, ``Diagram algebras,
Hecke algebras and decomposition numbers at roots of unity'' {\sl
Ann. Sci. \'Ecole Norm. Sup. \bf 36}  (2003), 479--524.

\bibitem[GL04]{GL04} J.J.~Graham and G.~I.~Lehrer,
``Cellular algebras and diagram algebras in representation theory'',
{\sl Representation theory of algebraic groups and quantum groups,
Adv. Stud. Pure Math. {\bf 40}}, Math. Soc. Japan, Tokyo, (2004),
141--173,

\bibitem[HW]{HW} P.~Hanlon and D.~Wales, ``A tower construction 
for the radical in Brauer's 
centralizer algebras'', {\sl  
J. Algebra  \bf 164} (1994), 773--830.

\bibitem[LZ]{LZ} G.~I.~Lehrer and R.~B.~Zhang, ``Strongly
multiplicity free modules for Lie algebras and quantum groups'',
{\sl J. of Alg. \bf 306} (2006), 138--174.

\bibitem[L1]{L1} Lusztig, G. ``Quantum deformations of certain simple modules over
enveloping algebras".  {\sl Adv. in Math.}  {\bf 70}  (1988),  no.
2, 237--249.

\bibitem[L2]{L2} Lusztig, G. ``Introduction to quantum groups". Progress in
Mathematics, 110. Birkhäuser Boston, Inc., Boston, MA, 1993.

\bibitem[RS]{RS} Hebing Rui and Mei Si, ``A criterion on the
semisimple Brauer algebras. II.'' {\sl J. Combin. Theory Ser. A
{\bf 113}} (2006), 1199--1203.

\bibitem[X]{X} Xi, Changchang.  ``On the quasi-heredity of Birman-Wenzl algebras".
{\sl Adv. Math.  \bf 154}  (2000),  no. 2, 280--298.


\end{thebibliography}
\end{document}